\documentclass[11pt]{amsart}
\usepackage[T1]{fontenc}
\usepackage{lmodern}
\usepackage[a4paper,margin=26mm]{geometry}
\usepackage{amsmath,amssymb,amsthm,mathtools}
\usepackage{microtype}
\usepackage[linktocpage=true,hidelinks]{hyperref}
\newtheorem{theorem}{Theorem}[section]
\newtheorem{proposition}[theorem]{Proposition}
\newtheorem{lemma}[theorem]{Lemma}
\newtheorem{corollary}[theorem]{Corollary}
\theoremstyle{definition}

\newtheorem{example}[theorem]{Example}
\theoremstyle{remark}
\newtheorem{remark}[theorem]{Remark}
\numberwithin{equation}{section}
\newcommand{\Db}{\mathrm D^{\mathrm b}}
\newcommand{\Aut}{\operatorname{Aut}}
\newcommand{\Pic}{\operatorname{Pic}}
\newcommand{\End}{\operatorname{End}}
\newcommand{\Hom}{\operatorname{Hom}}
\newcommand{\Supp}{\operatorname{Supp}}
\newcommand{\id}{\operatorname{id}}
\newcommand{\Alb}{\operatorname{Alb}}
\newcommand{\SL}{\operatorname{SL}}

\newcommand{\C}{\mathbb C}
\newcommand{\Kcal}{\mathcal K}
\newcommand{\Z}{\mathbb Z}
\newcommand{\cO}{\mathcal O}
\newcommand{\bo}{\boxtimes}
\newcommand{\NS}{\operatorname{NS}}
\newcommand{\mat}[4]{\begin{pmatrix}#1&#2\\#3&#4\end{pmatrix}}
\title{Fourier--Mukai partners and autoequivalences\\of moduli spaces of vector bundles}
\author{Haotian Zuo}
\address{Department of Mathematics, University of Houston, Houston 77054, Texas, U.S.A}
\email{hzuo@cougarnet.uh.edu}
\subjclass[2020]{Primary 14F08; Secondary 14H60, 14K05.}
\keywords{Fourier–Mukai partners; derived equivalences; moduli of vector bundles on curves;
abelian varieties; equivariant kernels; autoequivalence groups.}
\date{}
\hypersetup{
 pdftitle={Fourier-Mukai partners and autoequivalences of moduli spaces of vector bundles},
 pdfauthor={Haotian Zuo}
}

\begin{document}
\begin{abstract}
We classify the smooth projective Fourier--Mukai partners and determine
the derived autoequivalence group of the moduli space $U_C(r,d)$
of stable bundles of coprime rank $r\geq2$ and degree $d$ on a smooth
complex projective curve, assuming $g(C)\geq3$ and $\Aut(C)=1$.
When the Jacobian $J(C)$ has endomorphism ring $\Z$, the partners are indexed by
$(\Z/r\Z)^\times/\{\pm1\}$.
We also prove that two coprime moduli spaces of vector bundles over complex curves of
genus at least four are derived equivalent if and only if they are
isomorphic.
\end{abstract}
\maketitle

\section{Introduction}\label{secmodulisetup}

Throughout, varieties are defined over $\C$, and derived equivalences
are $\C$-linear and exact. We write $\Db(X)$ for the bounded derived
category of coherent sheaves and $\operatorname{FM}(X)$ for the
isomorphism classes of smooth connected projective varieties $Y$
with $\Db(Y)\simeq\Db(X)$; these are the Fourier--Mukai partners of
$X$ \cite{Uehara}.

A smooth projective variety with ample canonical or anticanonical
bundle is determined by its derived category, and its autoequivalences
are generated by automorphisms, line bundles, and shifts \cite{BO}.
For abelian varieties, derived equivalence is instead governed by
isometries between products with their duals \cite{Orlov}. Moduli
spaces of vector bundles on curves combine these two geometries
through a finite quotient, and the quotient action imposes an
additional obstruction to derived equivalence.

Let $C$ be a smooth connected projective curve of genus $g\geq2$,
let $r\geq2$ and $d\in\Z$ be coprime, and fix $L\in\Pic^d(C)$.
Set
\[
 M=U_C(r,d),\qquad N=SU_C(r,L),\qquad J=\Pic^0(C),\qquad H=J[r].
\]
Here $M$ parametrizes stable bundles of rank $r$ and degree $d$,
and $N$ is the fibre over $L$ of the determinant morphism
$M\to\Pic^d(C)$. Both spaces are smooth and projective, and $N$
is Fano \cite{DN}. The tensor-product map is a finite \'etale
Galois cover
\[
 \pi:N\times J\longrightarrow M,\qquad (E,x)\longmapsto E\otimes x,
\]
with group $H$ acting by
$\eta\cdot(E,x)=(E\otimes\eta,x-\eta)$.
We use additive notation on abelian varieties; thus
$M\simeq(N\times J)/H$.

The geometry of $N$ already determines much of the bundle data.
Kouvidakis and Pantev computed automorphism groups and proved
Torelli results \cite{KP}; in genus at least four, its isomorphism
class determines the curve, rank, and degree modulo the rank up to
sign \cite[Corollary~2.12]{AB}. Work on semiorthogonal decompositions
and embeddings has also clarified the structure of $\Db(N)$
\cite{TT24,Tev,LM25}, while its autoequivalences follow from
Bondal--Orlov reconstruction and the automorphisms of $N$ \cite{Zuo}.
Our problem is to determine which data survive when the determinant
varies and the finite quotient couples $N$ to $J$.

The reconstruction and abelian equivalence theorems do not by themselves
control compatibility with this finite quotient. Existing lifting and
descent criteria \cite{BM,LP,KS} and the obstruction theory for linearizing
invariant kernels \cite{Ploog} provide the categorical framework.
Applying this framework here requires identifying arbitrary partners
as product quotients with the same Fano factor and establishing
compatibility with the covers and the torsion linearization criterion.
The lifting and product-decomposition results also apply beyond these
moduli spaces (Theorems~\ref{liftmain} and~\ref{prodmain}).

For $g\geq3$ and $\Aut(C)=1$, we classify all smooth projective
Fourier--Mukai partners of $M$ and determine $\Aut\Db(M)$.
The classification retains both an abelian variety and an
identification of its $r$-torsion with $J[r]$. The condition on
these data is stronger than invariance of an equivalence kernel:
the kernel must admit a compatible linearization.

For abelian varieties $A,B$, let $\Hom(A,B)$ denote the group of
homomorphisms of algebraic groups and put $\End(A)=\Hom(A,A)$.
Write $\widehat A=\Pic^0(A)$ for the dual abelian variety and
$\widehat u:\widehat B\to\widehat A$ for the dual of $u:A\to B$.
An \emph{Orlov isometry} is a group isomorphism
\begin{equation}\label{orlovisometries}
 f=\mat{a}{b}{c}{e}:A\times\widehat A\xrightarrow{\sim}B\times\widehat B,
 \qquad f^{-1}=\mat{\widehat e}{-\widehat b}{-\widehat c}{\widehat a},
\end{equation}
using the canonical identification $\widehat{\widehat A}=A$.
Such isometries characterize derived equivalences of abelian
varieties \cite{Orlov}.

For an abelian variety $A$ and a group isomorphism
$\beta:J[r]\xrightarrow{\sim}A[r]$, set
\begin{equation}\label{generalpartnerquotient}
 Y_{A,\beta}=(N\times A)/H,\qquad
 \eta\cdot(E,x)=(E\otimes\eta,x-\beta(\eta)).
\end{equation}
The action is free, so $Y_{A,\beta}$ is smooth and projective.
The classification is as follows.
\begin{theorem}\label{partner-pair-classification}
Assume $g\geq3$ and $\Aut(C)=1$.
The Fourier--Mukai partners of $M$ are precisely the quotients
$Y_{A,\beta}$ for which there exists an Orlov isometry
\begin{equation}\label{partnerisometrycriterion}
 \mat{a}{b}{c}{e}:J\times\widehat J\xrightarrow{\sim}A\times\widehat A,
 \qquad a|_{J[r]}=\beta,\qquad c\in r^2\Hom(J,\widehat A).
\end{equation}
Two such quotients $Y_{A,\beta}$ and $Y_{A',\beta'}$ are isomorphic
if and only if there is a group isomorphism $u:A\to A'$ with
$u\beta=\beta'$. In particular, $\operatorname{FM}(M)$ is finite.
\end{theorem}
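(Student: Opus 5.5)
The proof has three parts: partners of $M$ have the shape $Y_{A,\beta}$; the criterion \eqref{partnerisometrycriterion} decides which $Y_{A,\beta}$ occur; and isomorphism classes are counted.

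\textbf{Partners are product quotients.} Let $Y$ be a partner of $M$. I would use the lifting result (Theorem~\ref{liftmain}) to pull back the equivalence along the Galois cover $\pi$. The cover $\pi$ corresponds to the $H$-torsor defined by $r$-torsion line bundles pulled back from $J$, namely the Albanese-type data. Derived invariance of the Albanese and of $\Pic^0$ (Rouquier's isomorphism $\Aut^0\times\Pic^0$) then produces a corresponding étale cover $\widetilde Y\to Y$ with group $H^\vee\cong H$. The equivalence lifts to an $H$-equivariant equivalence $\Db(N\times J)\simeq\Db(\widetilde Y)$.

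Next I would apply the product-decomposition theorem (Theorem~\ref{prodmain}). Since $N$ is Fano with Picard rank one and $J$ is abelian, it gives $\widetilde Y\cong N'\times A$ with $\Db(N')\simeq\Db(N)$. Bondal--Orlov then gives $N'\cong N$, and the lifted kernel decomposes as an automorphism-and-line-bundle factor on $N$ times an abelian-variety kernel $J\leadsto A$.

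The hypothesis $\Aut(C)=1$, with $g\ge3$ and the Kouvidakis--Pantev description, says that every automorphism of $N$ is a tensoring by $J[r]$, up to the dualization automorphisms when they exist. The $H$-action on the $N$ factor can therefore be normalized to $E\mapsto E\otimes\eta$. On $A$, $H$ acts freely by translations by a subgroup, which yields $\beta$. This identifies $Y\cong Y_{A,\beta}$.

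\textbf{The criterion.} An equivalence $\Db(J)\to\Db(A)$ has an Orlov isometry $f$. Translation by $x$ conjugates to the action of $f(x,0)=(a x,c x)$, that is, translation by $a x$ composed with tensoring by the line bundle $c x$. For the product kernel to be $H$-invariant we need $a|_{J[r]}=\beta$ and $c x$ trivial for $x\in J[r]$. This gives $c\in r\Hom(J,\widehat A)$.

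Invariance is not enough: the kernel must carry an $H$-linearization, so that it descends by the descent criteria of \cite{BM,KS}. Ploog's obstruction \cite{Ploog} lies in $H^2(H,\C^*)$. I would compute it as the commutator pairing on $H$ built from the Weil pairing composed with $c$, namely $(\eta,\eta')\mapsto e_r(\eta,\widehat{a}c\,\eta')$ or similar. I expect this step to be the main obstacle: showing that the obstruction vanishes, after the freedom of modifying the kernel by elements in the image of $\Pic^0$, exactly when $c\in r^2\Hom(J,\widehat A)$.

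For the converse, given such an isometry, I would take the kernel $\mathcal O_\Delta\boxtimes\mathcal E_f$, linearize it, and descend it to an equivalence $\Db(M)\simeq\Db(Y_{A,\beta})$. In the necessity direction, any equivalence lifts to one of product form, up to the normalized automorphisms of $N$, and this forces \eqref{partnerisometrycriterion}.

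\textbf{Isomorphism classes and finiteness.} If $u:A\to A'$ is an isomorphism with $u\beta=\beta'$, then $\id\times u$ descends to an isomorphism $Y_{A,\beta}\cong Y_{A',\beta'}$. Conversely, an isomorphism $Y_{A,\beta}\cong Y_{A',\beta'}$ lifts to the universal-type covers, which are canonical because they are induced by the Albanese. By Theorem~\ref{prodmain} it splits as an automorphism of $N$ times a map $A\to A'$, composed with a translation. The automorphism of $N$ is tensoring by some torsion element, which can be absorbed. Equivariance then forces $u\beta=\beta'$. The dualization case changes the sign of $\beta$, which must be checked to be absorbed by $-1$ on $A$.

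For finiteness, an abelian variety has only finitely many Fourier--Mukai partners up to isomorphism (Orlov, Huybrechts), and there are finitely many choices of $\beta$. Hence $\operatorname{FM}(M)$ is finite.
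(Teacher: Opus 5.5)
Your proposal has a genuine gap at its first step: showing that an arbitrary partner $Y$ is some $Y_{A,\beta}$.

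\textbf{The structural step.} Theorem~\ref{liftmain} cannot be invoked at that stage. It lifts equivalences between two quotients $(B\times A_i)/H$ that are already known to share the Fano factor $B$ and the group $H$. Your substitute is to pull back along the cover of $Y$ matched with $T=\delta^*\widehat J[r]\subset\Pic^0(M)$ by the Rouquier isomorphism. That requires the images of $T$ to have trivial $\Aut^0(Y)$-component, which you do not justify. Since $\Aut^0(M)\simeq J$ is abelian, the Rouquier isomorphism really does mix the two factors (compare \eqref{Rmatrix}). Preservation of $T$ only becomes visible once the partner is known to be a $Y_{A,\beta}$. More seriously, Theorem~\ref{prodmain} assumes that both sides are already Fano$\times$abelian products. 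It does not say that a partner $\widetilde Y$ of $N\times J$ has the form $N'\times A$. You also never show that the deck group acts on $N$ through an isomorphism onto $\Aut(N)=H$, and on $A$ through all of $A[r]$.

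This is exactly what the paper proves in Theorem~\ref{classification-geometric-partner}, in these steps:
\begin{enumerate}
\item the anticanonical contraction $p_W:W\to Z=N/H$ satisfies $\omega_W^{-m}\simeq p_W^*\cO_Z(-mK_Z)$;
\item Ambro's theorem for the klt pair $(W,D/m)$ gives $F\times A'\simeq W\times_{\Alb(W)}A'$ over an isogeny $A'\to\Alb(W)$;
\item Popa--Schnell gives $\dim A'=g$;
\item Lemma~\ref{crepant-quotient}, which needs terminality of $N/H$ (the age bound of Lemma~\ref{modfixed}), identifies $F\simeq N$ over $Z$;
\item connectedness of the fibres of $p_W$ forces the deck-group homomorphism onto $H$.
\end{enumerate}

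\textbf{The criterion.} It is also left open at its two essential points. You correctly derive invariance of the kernel from $a|_{J[r]}=\beta$ and $c\in r\Hom(J,\widehat A)$. The passage from invariance to linearizability, which you flag as the main obstacle, is the real content of Lemma~\ref{general-translation-linearization}:
\begin{enumerate}
\item shear so that $b$ has degree prime to $r$;
\item take Orlov's simple bundle kernel $\mathcal E$, of rank $\rho=\sqrt{\deg b}$;
\item note that $D=j_a^*\det\mathcal E$ has obstruction $\rho\alpha$, and multiplication by $\rho$ is invertible on $H^2(J[r],\C^*)$;
\item compute $\phi_D=\rho\,\widehat a c$;
\item use that $D$ descends through $[r]$ if and only if $[D]\in r^2\NS(J)$.
\end{enumerate}

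\textbf{The necessity direction.} A linearization of $\Delta_{N*}\Theta_N^m\bo K_F$ passes to $K_F$ only if the $N$-factor is itself $H$-linearizable (Lemma~\ref{factorlinearization}). Otherwise the two obstructions could cancel, and the condition on $(a,c)$ would not follow. This is supplied by Lemma~\ref{thetalinearization}, where coprimality kills the theta-group commutator $e_r^{-r/\gcd(r,d)}$. Your proposal never addresses it.

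Your isomorphism criterion and finiteness argument agree with the paper's. The dualization worry is moot: for $r\geq3$ coprimality excludes dualization on $N$, and for $r=2$ it is the identity on $N$.
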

Divisibility in \eqref{partnerisometrycriterion} is in the integral
group $\Hom(J,\widehat A)$. No restriction on $\End(J)$ is imposed.

When $\End(J)=\Z$, it has the following explicit form.
Write $\varphi(r)=|(\Z/r\Z)^\times|$. For $k\in(\Z/r\Z)^\times$, put
\begin{equation}\label{partnerquotient}
 X_k=Y_{J,[k]},\qquad [k](\eta)=k\eta.
\end{equation}
Thus $X_1\simeq M$.

\begin{corollary}\label{complete-scalar-partners}
Assume $g\geq3$, $\Aut(C)=1$, and $\End(J)=\Z$.
Every Fourier--Mukai partner of $M$ is isomorphic to an $X_k$, and
\(X_k\simeq X_l\text{ if and only if }l\equiv\pm k\pmod r.\)
Consequently, $\#\operatorname{FM}(M)=1$ for $r=2$ and
$\#\operatorname{FM}(M)=\varphi(r)/2$ for $r\geq3$.
\end{corollary}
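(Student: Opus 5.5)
The plan is to deduce the corollary from Theorem~\ref{partner-pair-classification} by making every datum explicit when $\End(J)=\Z$. The argument has three steps: determine which abelian varieties $A$ can occur, determine which $\beta$ are allowed, and then sort the resulting quotients into isomorphism classes.

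\emph{Step 1: $A\simeq J$.} Suppose $Y_{A,\beta}$ is a partner. The Orlov isometry in \eqref{partnerisometrycriterion} makes $A$ derived equivalent to $J$. Since $J$ is a Jacobian, it carries a principal polarization, so $\widehat J\simeq J$. Moreover, $\End(J)=\Z$ means $J$ is simple with trivial endomorphism algebra. The Fourier--Mukai partners of such an abelian variety are the $B$ with $B\times\widehat B\simeq J\times\widehat J$ isometrically. A simple $B$ with a map $J\times J\to B$ must be isogenous to $J$, and I will argue via the isometry that in fact $B\simeq J$ or $B\simeq\widehat J$. Concretely, the components $a\colon J\to A$ and $b\colon \widehat J\to A$ together generate $\Hom$ into $A$ along a lattice of rank~$2$ over $\Z$. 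The isometry condition then forces $A$ to be isomorphic to $J$ or $\widehat J$, and $\widehat J\simeq J$ in either case. I expect this to be the main obstacle. The cleanest route I would try first is to use that the partners of a principally polarized abelian variety with $\NS=\Z$ correspond to $\SL_2(\Z)$-type data, all yielding $\widehat J\simeq J$. Failing that, I would quote the known count of FM partners in terms of $\NS$ (Orlov / Hosono--Lian--Oguiso--Yau style): with $\NS(J)=\Z$ principal, the count is~$1$.

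\emph{Step 2: the possible $\beta$.} Identify $A=J$ and $\widehat J=J$ through the principal polarization. Then $a,b,c,e\in\End(J)=\Z$, and the Rosati involution is trivial, so $\widehat u=u$. The condition $f^{-1}=\mat{e}{-b}{-c}{a}$ says exactly that $ae-bc=1$, i.e.\ $f\in\SL_2(\Z)$. The requirement $c\in r^2\Z$ together with $ae\equiv1 \pmod{r^2}$ forces $a$ to be a unit mod~$r$. Hence $\beta=[k]$ with $k\equiv a$ invertible, and every partner is some $X_k$.

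Conversely, I need every $k\in(\Z/r\Z)^\times$ to occur. Choose integers $a\equiv k\pmod r$ and $c=r^2$ with $\gcd(a,r^2)=1$. Then there exist $b,e$ with $ae-br^2=1$, which gives the required isometry.

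\emph{Step 3: isomorphism classes.} By the theorem, $X_k\simeq X_l$ if and only if some $u\in\Aut(J)$ satisfies $u\circ[k]=[l]$ on $J[r]$. Here $\Aut(J)$ means group automorphisms, i.e.\ $\End(J)^\times=\{\pm1\}$. So the condition becomes $l\equiv\pm k\pmod r$. Counting classes gives $\varphi(r)/2$ for $r\geq3$, since $-1\neq1$ in $(\Z/r\Z)^\times$. For $r=2$ the group $(\Z/2\Z)^\times$ is trivial, so there is exactly one partner.
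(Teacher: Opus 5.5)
\paragraph{Verdict.} Your Steps~2 and~3 are essentially the paper's argument, but Step~1, that $A\simeq J$, is left unproved, and it is the one step that genuinely uses $\End(J)=\Z$.

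\paragraph{What matches.} Once $A$ is identified with $J$, all four blocks are integers, so $\beta=[a]$ with $a$ a unit modulo $r$. Every unit occurs: you write down $\mat{a}{b}{r^2}{e}\in\SL_2(\Z)$ and apply Theorem~\ref{partner-pair-classification} directly, while the paper uses the explicit kernel of Proposition~\ref{partnerconstruction} with matrix $\mat{k}{1}{ke-1}{e}$. These are the same idea. Finally, $\Aut_{\mathrm{gp}}(J)=\{\pm1\}$ in the isomorphism criterion gives $l\equiv\pm k$, and the count follows.

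\paragraph{The gap in Step~1.} The remark that $a$ and $b$ ``generate $\Hom$ into $A$ along a lattice of rank~$2$'' so that ``the isometry condition then forces $A\simeq J$ or $\widehat J$'' is not an argument. Isogeny to $J$ is easy, but isomorphism is what is needed. Your fallback citation does not fill this either: Hosono--Lian--Oguiso--Yau count partners of abelian \emph{surfaces}, and you state no precise result valid in dimension $g$.

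\paragraph{The missing idea.} The isometry $f$ gives an isomorphism of abelian varieties
\[
A\times\widehat A\simeq J\times\widehat J\simeq J^2 .
\]
Transporting the projection of $A\times\widehat A$ onto $A\times 0$ by $f$ gives an idempotent $p\in\End(J^2)=M_2(\Z)$ whose image is isomorphic to $A$. Since $\dim A=g$, the idempotent $p$ has rank one. For an idempotent on $\Z^2$ one has $\Z^2=\operatorname{im}p\oplus\ker p$, with both summands free of rank one. A basis adapted to this splitting gives $h\in\mathrm{GL}_2(\Z)$ with $hph^{-1}=\operatorname{diag}(1,0)$. The automorphism $h$ of $J^2$ then carries $\operatorname{im}p\simeq A$ onto $J\times 0$, so $A\simeq J$.

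This uses only the isomorphism $A\times\widehat A\simeq J^2$, not the isometry condition. It is exactly the paper's argument. With it inserted, the rest of your proof goes through as written.
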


Taking $d=1$ and varying $r$ therefore gives moduli spaces with
arbitrarily many pairwise nonisomorphic Fourier--Mukai partners,
despite the Fano property of their fixed-determinant fibres.

Let $\Aut\Db(M)$ denote the group of autoequivalences of the derived category $\Db(M)$, up to natural isomorphism,
with composition as the group law. Write $U(J\times\widehat J)$
for the group of Orlov self-isometries and set
\begin{equation}\label{generalcongruence}
 U(J\times\widehat J;r)=\{v\in U(J\times\widehat J):
 v|_{(J\times\widehat J)[r]}=\id\}.
\end{equation}
Also put $\Gamma(r)=\ker(\SL_2(\Z)\to\SL_2(\Z/r\Z))$.

\begin{theorem}\label{fullgroup}
\label{groupgeneral}\label{grpExact}
Assume $g\geq3$ and $\Aut(C)=1$. There is an exact sequence
\begin{equation}\label{grpExactSequence}
 1\longrightarrow\Z^2\times J(\C)\times\widehat J(\C)
 \longrightarrow\Aut\Db(M)
 \longrightarrow U(J\times\widehat J;r)\longrightarrow1.
\end{equation}
The image of the $\Z^2$ factor in $\Aut\Db(M)$ is central, and the conjugation action on
$J(\C)\times\widehat J(\C)$ is the natural action of
$U(J\times\widehat J;r)$.
If $\End(J)=\Z$, the sequence splits and yields a noncanonical
isomorphism
\begin{equation}\label{fullformula}
 \Aut\Db(M)\simeq
 \Z^2\times\bigl((J(\C)\times\widehat J(\C))\rtimes\Gamma(r)\bigr).
\end{equation}
Here $\Gamma(r)$ acts by matrix multiplication after identifying
$\widehat J$ with $J$ by the canonical principal polarization.
\end{theorem}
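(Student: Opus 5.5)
The plan is to reduce everything to $H$-equivariant autoequivalences of the cover $N\times J$, and then to read off the group from the abelian factor. The first step is to show that every autoequivalence $\Phi$ of $\Db(M)$ lifts along $\pi$ to an $H$-equivariant autoequivalence $\widetilde\Phi$ of $\Db(N\times J)$. Here I would use the lifting result of Theorem~\ref{liftmain}, applied through the canonical cover: the Albanese/quotient structure, namely the determinant map $M\to\Pic^d(C)$ whose pullback under $\pi$ is built from the isogeny $J\to J$, $x\mapsto rx$, is preserved by derived equivalences, since Rouquier's isomorphism identifies $\Pic^0\times\Aut^0$ invariantly. Two lifts differ only by the characters $\widehat H$ of the Galois group, acting by tensoring with the $r$-torsion line bundles that trivialize $\pi_*\cO$.

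Next, apply the product decomposition of Theorem~\ref{prodmain}. Since $N$ is Fano with $\Aut(N)$ controlled by \cite{KP}, and $\Aut(C)=1$, an autoequivalence of $\Db(N\times J)$ should have the form $(\Psi_N\bo\Psi_J)\otimes$(shift), with $\Psi_N$ generated by $\Pic(N)=\Z$, automorphisms and shifts. Then $H$-equivariance forces the $N$-factor to commute with tensoring by $H$. Since $\Aut(N)$ reduces to $H$ (together with the dualization only when the degree allows it, which is excluded by equivariance), the $N$-part contributes only a power of the ample generator $\Theta$ and a shift. The $J$-part is an autoequivalence of $\Db(J)$, and by Orlov it corresponds to $v\in U(J\times\widehat J)$ modulo $J\times\widehat J$ translations-twists and shifts.

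Equivariance of $\Theta^{\otimes m}\bo\Psi_J$ under the diagonal $H$-action then translates into a condition on $v=\mat{a}{b}{c}{e}$. The condition is that $\Psi_J$ intertwine translation by $\eta$ with translation by $\eta$, up to the twist coming from the nonequivariance of $\Theta$, which is a character of $H$ that can be absorbed. By Orlov's description, $\Psi_J\circ t_\eta\simeq t_{a\eta}\circ(\text{twist by }c\eta)\circ\Psi_J$, so the requirement is $a|_{J[r]}=\id$ and $c|_{J[r]}=0$. The remaining conditions on $b$ and $e$ come from the descended characters $\widehat H$ acting on $\widehat J[r]$. Together these give exactly $v|_{(J\times\widehat J)[r]}=\id$.

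The main obstacle is the linearization: an invariant kernel need not be equivariant. The obstruction lies in $H^2(H,\C^*)$, following \cite{Ploog}. I would compute it as the commutator pairing of the Heisenberg-type action on the kernel, and show that it vanishes exactly under the congruence condition, possibly after twisting by $\Theta^{m}$. This yields surjectivity onto $U(J\times\widehat J;r)$.

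For the kernel of the map to $U(J\times\widehat J;r)$, I would identify which equivalences act trivially. These are the shift, $\Theta^{m}$ descended to $M$, translations by $J(\C)$ (descending via $\pi$ as $E\mapsto E\otimes x$), and twists by $\widehat J(\C)$. This gives $\Z^2\times J\times\widehat J$, with the $\Z^2$ central and the conjugation action the natural one. When $\End(J)=\Z$, $U(J\times\widehat J)=\SL_2(\Z)$ via the principal polarization, so the congruence subgroup is $\Gamma(r)$. The splitting comes from lifting $\Gamma(r)$ through the metaplectic-type section of standard Orlov kernels (Poincaré-bundle composites), corrected by characters so that they are genuinely equivariant. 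This yields \eqref{fullformula}.
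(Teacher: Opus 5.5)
\textbf{The central step fails.} You take $v=\gamma(\Psi_J)$, the Orlov matrix of the abelian factor of the lift, and assert that the kernel descends exactly when $v|_{(J\times\widehat J)[r]}=\id$. This is false in both directions.
\begin{itemize}
\item On $J$, the functor $M_{\Theta^r}$ has matrix $\mat{1}{0}{r}{1}$ in principal-polarization coordinates, which is congruent to the identity, and its kernel is $J[r]$-invariant. But it is not linearizable, since $[\Theta^r]\notin r^2\NS(J)$. Because $\Theta_N$ is itself $H$-linearizable, Lemma~\ref{factorlinearization} shows that no twist by $\Theta_N^m$ repairs this. So $\cO_{\Delta_N}\bo\Delta_{J*}\Theta^r$ does not descend.
\item Conversely, convolution by $\Theta$ has kernel $d^*\Theta$, $d(x,y)=y-x$. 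This kernel carries the canonical diagonal linearization and descends (this is the construction of $\mathsf U$), although its matrix $\mat{1}{1}{0}{1}$ has $b\not\equiv0\pmod r$.
\end{itemize}
Nothing about the characters imposes a condition on $b$. The correct criterion on $\gamma(F)=\mat{a}{b}{c}{e}$ is Corollary~\ref{abeliandescent}: $a\equiv\id$ and $e\equiv\id\pmod r$, and $c\in r^2\Hom(J,\widehat J)$, with $b$ unconstrained. So the obstruction you propose to compute does not vanish exactly under your congruence condition. The real content is divisibility of $c$ by $r^2$; divisibility by $r$ is mere invariance. The paper proves this as follows: it shears so that $b$ has degree prime to $r$, passes to Orlov's simple vector-bundle kernel $\mathcal E$ of rank $\rho$ prime to $r$, and computes the obstruction of $j_a^*\det\mathcal E$ through $\phi_{j_a^*\det\mathcal E}=\rho\,\widehat a c$.

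\textbf{The missing idea is the correct normalization of the map to $U(J\times\widehat J;r)$.} It must be defined by conjugation on the Rouquier group of $M$ itself, $S_{x,\xi}=(\mathfrak a_x)_*M_{\delta^*P_\xi}$, and then compared with the lift. Since $\delta\pi=[r]\operatorname{pr}_J$, the lift of $S_{x,\xi}$ has abelian parameters $(x,r\xi)$. Hence $DR(\Phi)=\gamma(F)D$ with $D=\operatorname{diag}(\id_J,r\,\id_{\widehat J})$, that is, $R(\Phi)=\mat{a}{rb}{c/r}{e}$. Only after this conjugation does the criterion above become exactly the level-$r$ condition, with its converse giving surjectivity. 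Only in these coordinates is the conjugation action on $J(\C)\times\widehat J(\C)$ the natural one.

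Two further steps need real arguments rather than hedges.
\begin{itemize}
\item $\Theta_N$ must be $H$-linearizable. A failure would be a class in $H^2(H,\C^*)$, not a character that can be absorbed, and it would couple $m$ to $F$. The paper obtains linearizability from the theta-group commutator $e_r^{-r/\gcd(r,d)}$, which is trivial by coprimality.
\item For the splitting, correcting lifts by characters secures descent but not multiplicativity. The paper uses that $\Gamma(r)$ is free for $r\geq3$. For $r=2$ it shows by ping-pong that $\Gamma(2)=\langle u,v\rangle\times\langle-I\rangle\simeq F_2\times\Z/2\Z$. It then lifts $-I$ to the genuine involution $\kappa_*$, $\kappa(E)=E^\vee\otimes L$, which commutes with the lifts of $u$ and $v$.
\end{itemize}
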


The maps in \eqref{grpExactSequence} and the induced conjugation
action on its kernel are described in Section~\ref{secgroupsequence}.

Derived equivalence between two bundle moduli spaces is more rigid.
The following theorem requires no restriction on the automorphism
groups or Jacobian endomorphism rings of the curves.

\begin{theorem}[Derived Torelli for $U_C(r,d)$]
\label{moduli-derived-classification}
Let $C,C'$ be smooth connected projective curves with
$g(C),g(C')\geq4$.
Let $r,r'\geq2$ and $d,d'\in\Z$ satisfy
$\gcd(r,d)=\gcd(r',d')=1$.
The following conditions are equivalent:
\begin{enumerate}
\item $\Db(U_C(r,d))\simeq\Db(U_{C'}(r',d'))$;
\item $U_C(r,d)\simeq U_{C'}(r',d')$;
\item $C\simeq C'$, $r=r'$, and $d'\equiv\pm d\pmod r$.
\end{enumerate}
\end{theorem}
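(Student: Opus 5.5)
The plan is to prove (3)$\Rightarrow$(2)$\Rightarrow$(1) directly and then concentrate on (1)$\Rightarrow$(3). For (3)$\Rightarrow$(2), an isomorphism $C\simeq C'$ identifies the moduli spaces. Tensoring by a fixed line bundle of degree $e$ gives $U_C(r,d)\simeq U_C(r,d+re)$, and dualization gives $U_C(r,d)\simeq U_C(r,-d)$; together these cover $d'\equiv\pm d\pmod r$. The implication (2)$\Rightarrow$(1) is trivial.

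For (1)$\Rightarrow$(3), I would first extract the Fano factor and the abelian part. The Albanese variety is a derived invariant up to the Rouquier/Orlov isometry $\Alb\times\widehat{\Alb}$, and here $\Alb(U_C(r,d))$ is isogenous to $J$ via the determinant map. The key input is the product-decomposition result, Theorem~\ref{prodmain}. I would apply it to the étale cover $\pi:N\times J\to M$, using the lifting theorem, Theorem~\ref{liftmain}. An equivalence $\Db(M)\simeq\Db(M')$ with $M'=U_{C'}(r',d')$ lifts to an $H$-equivariant equivalence of the covers $N\times J$ and $N'\times J'$. To make this work, the covers must correspond: both are the canonical covers attached to the Albanese morphism, that is, pullback along multiplication by $r$ on $\Alb$. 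So I would need the kernel to preserve the relevant torsion subgroup, which I would read off from $\pi_1$ or from the action of $\Pic^0$ on $\Db$. Theorem~\ref{prodmain} then splits the equivalence into a Fano factor and an abelian factor. Since $N$ and $N'$ are Fano, Bondal--Orlov gives $N\simeq N'$. In particular $r=r'$, because $\dim N=(r^2-1)(g-1)$ and the Picard rank and index of $N$ are known. This step would be combined with the torsion order $|H|=r^{2g}$ matching $|H'|$.

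Next I would apply \cite[Corollary~2.12]{AB}: since $g,g'\ge4$, the isomorphism $SU_C(r,L)\simeq SU_{C'}(r',L')$ forces $C\simeq C'$, $r=r'$ and $d'\equiv\pm d\pmod r$. This yields (3) and closes the cycle. Note that no hypothesis on $\Aut(C)$ is needed here, because we only use the isomorphism class of the Fano factor and never classify the kernels.

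I expect the main obstacle to be justifying that an arbitrary equivalence is compatible with the covers $\pi$ and $\pi'$ without the $\Aut(C)=1$ assumption used elsewhere. I would first try to characterize $N\times J\to M$ intrinsically as the étale cover corresponding to the kernel of $\pi_1(M)\to\pi_1(\Alb M)$ composed with $[r]$. Equivalently, it is the cover trivializing the subgroup of $\Pic^0(M)$ that dies on the fibres of the Albanese map. This subgroup is preserved by the Rouquier isomorphism up to its $\widehat{\Alb}$ component, which suffices to apply Theorem~\ref{liftmain}. If that fails, the fallback is to compare $\Db(N)$ directly as the derived category of a general Albanese fibre, using the invariance of the Albanese fibration under derived equivalence.
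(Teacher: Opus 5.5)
\paragraph{Verdict.} Your (3)$\Rightarrow$(2)$\Rightarrow$(1) matches the paper. The argument for (1)$\Rightarrow$(3), however, has a genuine gap: it is circular at the key step.

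\paragraph{The circular step.} Theorem~\ref{liftmain} only lifts equivalences between two quotients $(B\times A_i)/H$ that share the \emph{same} Fano factor $B$ with the \emph{same} $H$-action. Its proof builds the summands $D_h$ from a lift $f$ of the induced automorphism $\sigma$ of $B/H$. To apply it to $M$ and $M'=U_{C'}(r',d')$, you must already know $N\simeq N'$ compatibly with $H\simeq H'$, and that is exactly what you want to deduce.

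\paragraph{Why your repairs do not close it.}
\begin{itemize}
\item By Lemma~\ref{quotient-albanese}, every class in $\Pic^0(M)=\delta^*\Pic^0(\Alb(M))$ is trivial on the Albanese fibres. So ``the subgroup dying on the fibres of the Albanese map'' is all of $\Pic^0(M)$, not the character subgroup.
\item Derived equivalences need not respect the Albanese fibration. The descent of $\cO_{\Delta_N}\bo d^*\Theta$, namely $\mathcal E\mapsto R\mathfrak a_*(\mathcal E\bo\Theta)$, is the construction of $\mathsf U$ in Section~\ref{secmoduli}, and it works for every $r$. It sends $k(E)$ to a sheaf supported on the orbit $E\otimes J$, which surjects onto $\Pic^d(C)$. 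Your fallback via ``general Albanese fibres'' therefore fails as stated.
\item The $\pi_1$ description gives no mechanism forcing an equivalence to match the two covers. It also involves $r'$, which you do not yet know equals $r$.
\end{itemize}

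\paragraph{The missing idea.} Descend to the anticanonical base instead of lifting to the cover.
\begin{enumerate}
\item By Proposition~\ref{quotient-anticanonical}, $-K_M$ is semiample and its contraction is $p:M\to N/H$. Its fibres are the $J$-orbits, not the determinant fibres.
\item Lemmas~\ref{canonical-global-generation} and~\ref{antican} show that any derived equivalence induces an isomorphism of anticanonical rings. Hence $N/H\simeq N'/H'$.
\item The fixed loci have codimension at least two (Lemma~\ref{modfixed}), and a Fano variety has no nontrivial connected finite \'etale covers. So the free locus $N^\circ\to(N/H)_{\mathrm{reg}}$ is the universal finite \'etale cover (Lemma~\ref{small}).
\item The isomorphism of quotients therefore lifts over the regular loci and extends by normalization to $N\simeq N'$ (Proposition~\ref{reconstruction}).
\item Then \cite[Corollary~2.12]{AB} gives (3).
\end{enumerate}
This route needs no lift of the equivalence, no Theorem~\ref{prodmain}, no Bondal--Orlov, and no separate dimension count to get $r=r'$.
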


The paper is organized as follows. Section~\ref{secgeometry} studies
the geometry of the finite quotient presentation, including torsion
fixed loci, Albanese morphisms, and anticanonical models. It proves the
derived Torelli theorem (Theorem~\ref{moduli-derived-classification})
and identifies the product-quotient structure of arbitrary partners
(Theorem~\ref{classification-geometric-partner}).
Section~\ref{seclifting} develops the results on equivalences needed
for the classification: equivariant lifting
(Theorem~\ref{liftmain}), decomposition of equivalences between
Fano--abelian products (Theorem~\ref{prodmain}), and the criterion for
linearization under torsion translations. These results are applied
in Section~\ref{secpartners} to prove the complete partner classification
(Theorem~\ref{partner-pair-classification}) and its scalar specialization
(Corollary~\ref{complete-scalar-partners}). That section also studies
explicit scalar quotients and gives real-multiplication examples
illustrating the effect of additional endomorphisms on the classification.
Finally, Section~\ref{secmoduli} determines the autoequivalence group
and proves Theorem~\ref{fullgroup}: it first establishes the general
exact sequence and then constructs a splitting when $\End(J)=\Z$.

\section{Geometry of the moduli spaces and their partners}\label{secgeometry}
The determinant morphism and the anticanonical contraction separate
the two factors of the cover $N\times J\to M$. We first recover the
Fano factor and its torsion action from the anticanonical model. We
then use an Albanese splitting to recover the product-quotient
structure of an arbitrary Fourier--Mukai partner.

We write $\NS(V)=\Pic(V)/\Pic^0(V)$, $\Alb(V)$ for the Albanese
variety, and $\Aut^0(V)$ for the identity component of its automorphism
group. For a line bundle $R$, put $M_R=(-\otimes R)$; $k(v)$ denotes
the skyscraper sheaf at a closed point, in degree zero. Set
\[
 \Supp(K)=\bigcup_i\Supp\mathcal H^i(K),\qquad
 R(V,D)=\bigoplus_{n\geq0}H^0(V,\cO_V(nD)).
\]
Operations on complexes are derived unless ordinary sheaf operations
are specified.

An equivalence between smooth projective varieties has a
Fourier--Mukai kernel, unique up to isomorphism
\cite[Theorem~5.14]{Huybrechts}. Our conventions are
\[
 \Phi_K(E)=Rp_{2*}(p_1^*E\otimes K),\qquad
 \Phi_{K_2\star K_1}\simeq\Phi_{K_2}\Phi_{K_1},
\]
\[
 K_2\star K_1=Rp_{13*}(p_{12}^*K_1\otimes p_{23}^*K_2),
\]
with projections from the relevant products. On nonproper varieties,
we use kernels whose support is proper over both factors. Hom spaces
of kernels are taken in the derived category of the product;
$\End(K)=\Hom(K,K)$, and $K$ is simple if this algebra is $\C$.
Writing $\Delta_V:V\hookrightarrow V^2$ for the diagonal, the identity
kernel is $\Delta_{V*}\cO_V$. For $K\in\Db(X_1\times X_2)$ and
$L\in\Db(Y_1\times Y_2)$, we use
\[
 K\bo L=p_{13}^*K\otimes p_{24}^*L
 \in\Db\bigl((X_1\times Y_1)\times(X_2\times Y_2)\bigr),
\]
and write $\Phi_K\bo\Phi_L$ for the resulting functor.

\subsection{Fixed determinants and torsion actions}

Keep $M=U_C(r,d)$, $N=SU_C(r,L)$, $J=\Pic^0(C)$, and $H=J[r]$
from Section~\ref{secmodulisetup}; here $g\geq2$, $r\geq2$, and
$\gcd(r,d)=1$. The varieties $M,N$ are smooth, connected, and
projective, and $\dim N=(r^2-1)(g-1)$. Moreover,
\begin{equation}\label{modpicard}
 \Pic(N)=\Z[\Theta_N],\qquad \omega_N\simeq\Theta_N^{-2},
\end{equation}
where $\Theta_N$ is ample \cite[Theorems B and F]{DN}.
The choice of $L$ gives $\delta:M\to J$,
$E\mapsto\det E\otimes L^{-1}$. Pulling back $[r]:J\to J$
along $\delta$ gives the finite \'etale Galois cover
\begin{equation}\label{cover}
 \pi:N\times J\longrightarrow M,\qquad (E,x)\longmapsto E\otimes x,
 \qquad \delta\pi=[r]\operatorname{pr}_J,
\end{equation}
with group $H$ acting by
\begin{equation}\label{actionH}
 h_\eta(E,x)=(E\otimes\eta,x-\eta).
\end{equation}
Let $N^\eta$ be the fixed locus of tensoring by $\eta\in H$.

\begin{lemma}[Fixed loci and quotient singularities]
\label{modfixed}\label{classification-terminal}
Let $\eta\in H$ have order $n>1$. Every nonempty component of
$N^\eta$ has dimension $(r^2/n-1)(g-1)$, and
\[
 \operatorname{codim}_N N^\eta
 \geq r^2(1-1/n)(g-1)\geq2.
\]
For every subgroup $K\subset H$, the quotient $N/K$ is
$\mathbb Q$-factorial and canonical. It is terminal if
$(g,r)\ne(2,2)$.
\end{lemma}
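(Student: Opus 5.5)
Following Narasimhan--Ramanan, I would identify the fixed locus $N^\eta$ via spectral data for cyclic étale covers. Then I would bound the codimension and deduce the singularity statements from the Reid--Tai criterion together with purity-type results for quotients.

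\emph{Step 1: describing $N^\eta$.} Let $\eta\in H$ have order $n$, and let $p:C_\eta\to C$ be the cyclic étale cover of degree $n$ defined by $\eta$. Its genus is $g_\eta=n(g-1)+1$. Suppose $E\otimes\eta\simeq E$ with $E$ stable. Then $E\simeq p_*F$ for a line-bundle-twisted object: since $n\mid r$, set $F$ of rank $r/n$ on $C_\eta$. Stability of $E$ forces $F$ to be stable with its Galois conjugates pairwise nonisomorphic. Coprimality makes the pushforward of a stable $F$ stable, because any destabilizing subsheaf would be $\mathbb Z/n$-stable and hence descend.

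The fixed condition $\det E\simeq L$ cuts down $\det p_*F\simeq\operatorname{Nm}(\det F)\otimes(\text{fixed})$. So $N^\eta$ is a finite union of images of the fibres of the composite map
\[
U_{C_\eta}(r/n,d)\to\Pic(C_\eta)\xrightarrow{\operatorname{Nm}}\Pic(C),
\]
modulo the Galois action, which is finite. Computing dimensions:
\[
\dim = (r/n)^2(g_\eta-1)+1-g = (r^2/n)(g-1)+1-g = (r^2/n-1)(g-1).
\]
This uses that the norm map is surjective with fibres of dimension $g_\eta-g$, and that each fibre of the norm is equidimensional. The codimension is then
\[
(r^2-1)(g-1)-(r^2/n-1)(g-1)=r^2(1-1/n)(g-1)\ge r^2(g-1)/2\ge 2,
\]
using $n\ge2$, $r\ge 2$ and $g\ge 2$.

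\emph{Step 2: $\mathbb Q$-factoriality.} Since $N$ is smooth with $\Pic(N)=\Z[\Theta_N]$, and $K$ is finite, Weil divisors on $N/K$ pull back to Cartier divisors on $N$. Taking norms shows that some multiple of each Weil divisor is Cartier. This is the standard fact that a finite quotient of a smooth variety is $\mathbb Q$-factorial.

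\emph{Step 3: canonical and terminal singularities.} The action is free in codimension one, since every nontrivial element fixes a locus of codimension $\ge2$. Hence the quotient map is étale in codimension one, so $N/K$ has klt, in fact canonical, singularities whenever the Reid--Tai age condition holds. More simply, quotients of smooth varieties by finite groups acting without pseudoreflections are klt with $K_{N/K}$ $\mathbb Q$-Cartier.

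For terminality I would apply Reid--Tai at a fixed point of $\kappa\in K$ of order $n$. The tangent eigenvalues that are nontrivial span the normal space, whose dimension is the codimension $c\ge r^2(1-1/n)(g-1)$. Each eigenvalue contributes age at least $1/n$, so the age is at least $c/n$. However, a better estimate uses that eigenvalues come in Galois-symmetric families: the normal representation is a sum of copies of nontrivial characters $\zeta^j$ with equal multiplicities $m=r^2(g-1)/n$. Granting this equal-multiplicity claim, the age is
\[
\sum_{j=1}^{n-1} m\,j/n = m(n-1)/2 \ge 1,
\]
with strict inequality, giving terminality, unless $m(n-1)/2\le1$. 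Since $m=r^2(g-1)/n$ and $n\le r^2$, this fails only when $(g,r)=(2,2)$, where $n=2$, $m=2$ and the age equals $1$, giving merely canonical singularities.

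I expect the main obstacle to be this equal-multiplicity computation. I would derive it from the identification
\[
T_EN=H^1(C,\operatorname{End}_0E)=H^1(C_\eta,p^*\operatorname{End}_0 E)
\]
split by $\eta^j$-isotypic pieces, where each piece $H^1(C,\operatorname{End}_0E\otimes\eta^j)$ for $j\ne0$ has dimension $r^2(g-1)/n$ by Riemann--Roch, after noting that its $H^0$ vanishes by stability.
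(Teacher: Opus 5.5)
The step that fails is your derivation of the equal multiplicities. The identification $H^1(C,\mathcal{E}nd_0(E))=H^1(C_\eta,p^*\mathcal{E}nd_0(E))$ is false. Since $p_*\cO_{C_\eta}\simeq\bigoplus_{j=0}^{n-1}\eta^{-j}$, the right side is $\bigoplus_jH^1(C,\mathcal{E}nd_0(E)\otimes\eta^{j})$, and $T_{[E]}N$ is only its deck-invariant summand.

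The other summands are isotypic pieces for the deck group of $C_\eta$, not eigenspaces of the tangent action of $\eta$. Their $H^0$ also does not vanish. Because $E\otimes\eta\simeq E$, the power $\varphi^j$ is a nonzero section of $\mathcal{E}nd(E)\otimes\eta^j$, and it is traceless since $H^0(\eta^j)=0$ for $0<j<n$. Riemann--Roch then gives $(r^2-1)(g-1)+1$ for each such piece, not $r^2(g-1)/n$.

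What must be decomposed is the bundle $\mathcal{E}nd(E)$ under $\alpha(u)=\varphi^{-1}(u\otimes\id_\eta)\varphi$, which induces the tangent action. In your spectral picture $E=p_*F$, adjunction and $p^*p_*F\simeq\bigoplus_\gamma\gamma^*F$ give $\mathcal{E}nd(E)\simeq\bigoplus_{j=0}^{n-1}p_*\mathcal Hom(\gamma^{j*}F,F)$ for a deck generator $\gamma$. Since $\varphi$ is multiplication by the tautological section $s$ and $\gamma^*s=\zeta s$, the $j$-th summand is the $\zeta^{\pm j}$-eigenbundle. For $j\neq0$ its $H^1$ is $H^1(C_\eta,\mathcal Hom(\gamma^{j*}F,F))$. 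Its $H^0$ vanishes because $F\not\simeq\gamma^{j*}F$ are stable of equal slope, and Riemann--Roch on $C_\eta$ gives $(r/n)^2(g_\eta-1)=(r^2/n)(g-1)$.

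This is the paper's computation. The paper splits $\mathcal{E}nd(E)$ into eigenbundles $\mathcal B_j$, gets their rank from $p^*\mathcal B_j\simeq\bigoplus_a\mathcal Hom(V_a,V_{a-j})$ and degree zero from semistability of $\mathcal{E}nd(E)$. It then reads the dimension of $N^\eta$, its codimension, and the exact age off the single list $m_0,m_j$, without ever parametrizing $N^\eta$.

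You do not actually need equal multiplicities: the crude estimate you wrote down and then set aside already suffices. The fixed component through $[E]$ is smooth with tangent space the $\eta$-invariants. So by your Step~1 there are exactly $c=r^2(1-1/n)(g-1)$ nontrivial eigenvalues, each contributing at least $1/n$. Since $n\mid r$,
\[
 \operatorname{age}(\eta)\geq\frac{c}{n}
 =\Bigl(\frac{r}{n}\Bigr)^{2}(n-1)(g-1)\geq1,
\]
and the right-hand side exceeds $1$ unless $r=n=g=2$.

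With this bound, your route is a valid alternative: dimension of $N^\eta$ from Narasimhan--Ramanan and the fibre dimension of $\operatorname{Nm}$, then Reid--Tai. It gives a global description of the fixed loci. The cost is citing the spectral description of $N^\eta$ and the finiteness of $F\mapsto p_*F$, whereas the paper's infinitesimal argument is self-contained.

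Two minor repairs are needed. First, $p_*F$ is stable because $p^*p_*F\simeq\bigoplus_\gamma\gamma^*F$ is polystable, so $p_*F$ is semistable and hence stable by coprimality; no descent of subsheaves is involved. Second, the inequality behind your final case analysis is $(n-1)/n\geq1/2$, which gives age at least $r^2(g-1)/4$; the bound $n\leq r^2$ does not do this job.
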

\begin{proof}
We compute the tangent representation using the cyclic cover of $C$
associated with $\eta$, as in \cite[Lemma~2.1 and (2.16)]{BH}.
The calculation below also covers $(g,r)=(2,2)$, which is excluded
in that reference.

For $[E]\in N^\eta$, choose $\tau:\eta^{\otimes n}\simeq\cO_C$
and $\varphi:E\simeq E\otimes\eta$. Simplicity of $E$ allows us
to rescale $\varphi$ so that its $n$-fold composite, identified using
$\tau$, is $\id_E$. The associated connected cyclic \'etale cover is
\[
 p:C_\eta=\operatorname{Spec}_C
 \left(\bigoplus_{j=0}^{n-1}\eta^{-j}\right)\longrightarrow C.
\]
Its tautological section $s$ of $p^*\eta$ satisfies
$p^*\tau(s^n)=1$. Write $p^*\varphi=T\otimes s$ and put
$\zeta=\exp(2\pi i/n)$. Then
\[
 T^n=\id,\qquad p^*E=\bigoplus_{a=0}^{n-1}V_a,
 \qquad V_a=\ker(T-\zeta^a\id).
\]
For the deck generator with $\gamma^*s=\zeta s$, one has
$\gamma^*T=\zeta^{-1}T$, hence $\gamma^*V_a\simeq V_{a+1}$.
All these subbundles have rank $r/n$.

Let $\mathcal B_j$ be the $\zeta^j$-eigensubbundle of
$\mathcal{E}nd(E)$ for
$\alpha(u)=\varphi^{-1}(u\otimes\id_\eta)\varphi$.
Since $p^*\alpha(u)=T^{-1}uT$,
\[
 p^*\mathcal B_j\simeq
 \bigoplus_{a=0}^{n-1}\mathcal{H}om(V_a,V_{a-j}),\qquad
 \operatorname{rk}\mathcal B_j=\frac{r^2}{n}.
\]
The summands $\mathcal B_j$ have degree zero because
$\mathcal{E}nd(E)$ is semistable of degree zero. Its global sections
are the scalar endomorphisms, lying in $\mathcal B_0$. Removing
this scalar summand gives
\[
 \mathcal{E}nd_0(E)=\mathcal B_0^0\oplus
 \bigoplus_{j=1}^{n-1}\mathcal B_j,
 \qquad \mathcal B_0^0=\ker(\operatorname{tr}|_{\mathcal B_0}),
\]
whose summands have degree zero and no global sections.
The tangent action is induced by $\alpha$ on
$T_{[E]}N=H^1(C,\mathcal{E}nd_0(E))$. Riemann--Roch gives
the multiplicities
\[
 m_0=\left(\frac{r^2}{n}-1\right)(g-1),\qquad
 m_j=\frac{r^2}{n}(g-1)\quad(1\leq j<n).
\]
The fixed locus is smooth with invariant tangent space, proving its
dimension and codimension. The age is
\begin{equation}\label{modage}
 \operatorname{age}(\eta;T_{[E]}N)
 =\sum_{j=1}^{n-1}\frac jn m_j
 =\frac{r^2(n-1)(g-1)}{2n}.
\end{equation}
It is at least $r^2(g-1)/4\geq1$, and is strictly greater than one
unless $(g,r)=(2,2)$.

For any $K\subset H$, these bounds apply to every nonidentity
element of every stabilizer $K_{[E]}$. The codimension bound
excludes pseudoreflections. Analytic linearization and the
Reid--Tai criterion \cite[Theorem~2.3(ii), (iii)]{ReidTai} therefore
give the asserted canonical and terminal singularities.

Finally, a finite quotient of a smooth variety is
$\mathbb Q$-factorial. In the present case, for $q:N\to N/K$ and
a Weil divisor $D$ on $N/K$, the Cartier divisor $q^*D$ is principal
on the semilocal scheme $N\times_{N/K}\operatorname{Spec}\cO_{N/K,z}$
for each $z\in N/K$. The norm of a rational equation has divisor
$q_*q^*D=|K|D$. Thus $|K|D$ is locally Cartier.
\end{proof}

If $g\geq3$ and $\Aut(C)=1$, \cite[Theorem A]{KP} gives
\begin{equation}\label{fixedaut}
 \Aut(N)=\{s_\eta:\eta\in H\},\qquad s_\eta(E)=E\otimes\eta.
\end{equation}
For $r\geq3$, coprimality implies $r\nmid2d$, excluding
fixed-determinant dualization; for $r=2$, the map
$E\mapsto E^\vee\otimes L$ is the identity on $N$.
Lemma~\ref{modfixed} makes the tensoring action faithful.

\subsection{Albanese morphisms and anticanonical models}

\begin{lemma}[Albanese morphism of a finite product quotient]
\label{quotient-albanese}
Let $B$ be a smooth connected projective variety with $\Alb(B)=0$,
let $A$ be an abelian variety, and let a finite abelian group $H$
act on $B$ and embed in $A$ by $\iota:H\hookrightarrow A(\C)$.
Put $X=(B\times A)/H$ for the action
$h(b,a)=(hb,a+\iota(h))$, and let $q:A\to\bar A=A/\iota(H)$.
Then $\delta:X\to\bar A$, $[b,a]\mapsto q(a)$, is an Albanese
morphism, and
\[
 B\times A\xrightarrow{\sim}X\times_{\bar A}A.
\]
In particular, $\delta$ is smooth and proper with connected fibres,
$\delta_*\cO_X=\cO_{\bar A}$, and
$\delta^*:\Pic^0(\bar A)\xrightarrow{\sim}\Pic^0(X)$.
\end{lemma}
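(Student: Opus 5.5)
The plan is to write down the fibre product isomorphism directly, use it to deduce the geometric properties of $\delta$, and then verify the Albanese universal property by lifting to the cover.

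\textbf{Step 1: the map $\delta$ and the fibre product.} The morphism $B\times A\to\bar A$, $(b,a)\mapsto q(a)$, is $H$-invariant because $q(a+\iota(h))=q(a)$. It therefore descends to $\delta:X\to\bar A$. Consider the map $\psi=(\rho,\operatorname{pr}_A):B\times A\to X\times_{\bar A}A$, where $\rho:B\times A\to X$ is the quotient. Since $H$ acts freely on $B\times A$ (it acts freely on the $A$ factor by translation), $\rho$ is a finite étale $H$-torsor. Likewise $q:A\to\bar A$ is an $\iota(H)$-torsor, so its base change $X\times_{\bar A}A\to X$ is also an $H$-torsor. The map $\psi$ is $H$-equivariant over $X$ if we let $H$ act on $X\times_{\bar A}A$ through translation on the second factor. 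An equivariant morphism of $H$-torsors over the same base is an isomorphism, which gives the displayed isomorphism.

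\textbf{Step 2: properties of $\delta$.} Smoothness and properness can be checked after the faithfully flat étale base change $q$. After this base change, $\delta$ becomes $\operatorname{pr}_A:B\times A\to A$, which is smooth and proper with fibre $B$, and $B$ is connected. Cohomology commutes with flat base change, so $q^*\delta_*\cO_X=\operatorname{pr}_{A*}\cO_{B\times A}=\cO_A$, using $H^0(B,\cO_B)=\C$. Descent then gives $\delta_*\cO_X=\cO_{\bar A}$. Connectedness of the fibres follows, since each fibre of $\delta$ is isomorphic to $B$.

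\textbf{Step 3: the Albanese property.} This is the main step. Let $f:X\to T$ be a morphism to an abelian variety sending a base point $[b_0,0]$ to $0$. Compose with $\rho$ to get $F:B\times A\to T$. By rigidity, a morphism from a product of varieties to an abelian variety is the sum of its restrictions to the two factors. The restriction to $B$ is constant because $\Alb(B)=0$. Hence $F(b,a)=g(a)$ for a homomorphism $g:A\to T$, after normalising base points. $H$-invariance of $F$ gives $g(a+\iota(h))=g(a)$, so $g$ kills $\iota(H)$ and factors uniquely as $g=\bar g\circ q$ with $\bar g:\bar A\to T$. Then $f\rho=\bar g\,\delta\rho$, and since $\rho$ is surjective (indeed an epimorphism of reduced varieties), $f=\bar g\delta$. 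Uniqueness of $\bar g$ holds because $\delta$ is surjective. So $\delta$ is an Albanese morphism.

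\textbf{Step 4: $\Pic^0$.} $\Pic^0(X)$ is dual to $\Alb(X)=\bar A$, and the dual of $\delta_*:\Alb(X)\to\bar A$ is $\delta^*$ on $\Pic^0$. Since $\delta_*$ is the identity on the Albanese, $\delta^*$ is an isomorphism. Alternatively, one can argue directly. Injectivity follows from the projection formula and $\delta_*\cO_X=\cO_{\bar A}$. For surjectivity, a class in $\Pic^0(X)$ pulls back to $B\times A$ as $\cO_B\boxtimes P$, because $\Pic^0(B)=0$ (as $\Alb(B)=0$) and $\Pic^0(B\times A)=\Pic^0(B)\times\Pic^0(A)$. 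Its $H$-linearisation then descends $P$ to $\bar A$.

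The only delicate point is Step 3: the rigidity argument, together with the normalisation of base points so that $g$ is a homomorphism. I expect that to go through as the standard product-splitting of maps to abelian varieties.
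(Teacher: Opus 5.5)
Your proposal is correct and follows essentially the paper's proof. The equivariant morphism of $H$-torsors $(b,a)\mapsto([b,a],a)$ gives the fibre product, the properties of $\delta$ are checked after base change along $q$, a pointed map $B\times A\to T$ is constant on $B$-fibres and so comes from a homomorphism $A\to T$ killing $\iota(H)$, and Albanese--Picard duality gives the $\Pic^0$ statement. The only minor variation is that you obtain $\delta_*\cO_X=\cO_{\bar A}$ by flat base change and descent rather than by Stein factorization, which is equally valid.
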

\begin{proof}
The action on $B\times A$ is free. The map
$(b,a)\mapsto([b,a],a)$ identifies two finite \'etale $H$-torsors
over $X$, and is therefore an isomorphism. Smoothness, properness,
and connectedness of the fibres follow after base change to $A$;
Stein factorization gives $\delta_*\cO_X=\cO_{\bar A}$.

Choose $x_0=[b_0,0]$. A pointed morphism $X\to T$ to an abelian
variety pulls back to a morphism $B\times A\to T$ constant on
every $B$-fibre, since $\Alb(B)=0$. It thus comes from a
homomorphism $A\to T$. Its $H$-invariance is exactly the condition
that this homomorphism factor through $\bar A$. This proves the
Albanese assertion, and Albanese--Picard duality gives the Picard
isomorphism \cite[Remarks~5.24--5.25]{KleimanPicard}.
\end{proof}

For the rest of this subsection, let $B$ be a smooth connected
projective Fano variety and $A$ an abelian variety. Let a finite
abelian group $H$ act on $B$ by $h_B$ and embed in $A$ by $\iota$.
Assume
\begin{equation}\label{smallhyp}
 \operatorname{codim}_B B^h\geq2\qquad(h\ne0),
\end{equation}
allowing empty fixed loci. In particular, the action on $B$ is
faithful. Set
\begin{equation}\label{generalaction}
 Y=B\times A,\qquad
 h_Y(b,a)=(h_B(b),a+\iota(h)),\qquad
 X=Y/H,\qquad \pi:Y\longrightarrow X.
\end{equation}
The map $\pi$ is finite \'etale and $X$ is smooth projective.
Kodaira vanishing gives $\Alb(B)=0$. Thus
Lemma~\ref{quotient-albanese} identifies the Albanese morphism
$\delta:X\to\bar A=A/\iota(H)$ and its Cartesian pullback
$Y=X\times_{\bar A}A$.

For $q:A\to\bar A$, write $L_\chi\in\Pic^0(\bar A)$ for the
eigensheaf of $q_*\cO_A$ indexed by
$\chi\in H^\vee=\Hom(H,\C^*)$. Flat base change gives
\begin{equation}\label{characters}
 \pi_*\cO_Y=\bigoplus_{\alpha\in T}\alpha,
 \qquad T=\{\delta^*L_\chi:\chi\in H^\vee\}\subset\Pic^0(X).
\end{equation}
These \emph{character line bundles} form the subgroup
$T=\ker(\pi^*:\Pic^0(X)\to\Pic^0(Y))$ of order $|H|$.

Write $a:B\to Z=B/H$ for the quotient and
$p:X\to Z$ for the morphism induced by $a\operatorname{pr}_B$.
Let $Z_{\mathrm{reg}}$ be the smooth locus and
$B^\circ=B\setminus\bigcup_{h\ne0}B^h$ the free locus.

\begin{lemma}\label{small}
We have $a^{-1}(Z_{\mathrm{reg}})=B^\circ$, and
$a:B^\circ\to Z_{\mathrm{reg}}$ is a universal finite \'etale
cover with group $H$. Every automorphism of $Z$ lifts to an
automorphism of $B$, uniquely up to composition with an element of $H$.
\end{lemma}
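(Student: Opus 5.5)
The plan is to prove the three assertions in order: the equality $a^{-1}(Z_{\mathrm{reg}})=B^\circ$, then the universality of the cover, then the lifting of automorphisms.

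\emph{The equality $a^{-1}(Z_{\mathrm{reg}})=B^\circ$.} Since $H$ acts freely on $B^\circ$, the restriction $a:B^\circ\to a(B^\circ)$ is a finite \'etale $H$-torsor, so $a(B^\circ)\subset Z_{\mathrm{reg}}$. For the reverse inclusion, take $b$ with nontrivial stabilizer $H_b$. By Cartan's analytic linearization, the germ $(Z,a(b))$ is isomorphic to $(\C^n/H_b,0)$. Hypothesis \eqref{smallhyp} says that no element of $H_b$ is a pseudoreflection. The Chevalley--Shephard--Todd theorem then shows that $\C^n/H_b$ is singular at the origin. Hence $a(b)\notin Z_{\mathrm{reg}}$.

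\emph{Universality of the cover.} The complement $B\setminus B^\circ$ has codimension at least $2$ by \eqref{smallhyp}. Purity of the branch locus, or equivalently the fact that removing a closed subset of codimension $\ge2$ from a normal variety does not change $\pi_1$, gives $\pi_1(B^\circ)=\pi_1(B)$. The Fano variety $B$ is simply connected (Campana, Koll\'ar--Miyaoka--Mori). Therefore $B^\circ$ is simply connected. It follows that $B^\circ\to Z_{\mathrm{reg}}$ is the universal cover, with $\pi_1(Z_{\mathrm{reg}})\simeq H$.

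\emph{Lifting automorphisms.} An automorphism $\sigma$ of $Z$ preserves $Z_{\mathrm{reg}}$. Apply the lifting criterion to $\sigma\circ a:B^\circ\to Z_{\mathrm{reg}}$, using simple connectivity of $B^\circ$. This gives a lift $\tilde\sigma:B^\circ\to B^\circ$ with $a\tilde\sigma=\sigma a$. The lift is unique up to deck transformations, which are exactly the elements of $H$, and $\tilde\sigma$ is an automorphism because $\sigma^{-1}$ lifts as well.

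\emph{Extension over $B$.} This step is the main obstacle: extending $\tilde\sigma$ from $B^\circ$ to $B$. I would use normalization. The finite morphism $a:B\to Z$ identifies $B$ with the normalization of $Z$ in the function field $\C(B^\circ)$. The lift $\tilde\sigma$ induces an automorphism of $\C(B)$ compatible with $\sigma$ on $\C(Z)$. Uniqueness of normalization then shows that $\tilde\sigma$ extends to an automorphism of $B$ over $\sigma$.

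An alternative is to argue on graphs. The closure of the graph of $\tilde\sigma$ in $B\times B$ is finite over both factors via $a$. It is also birational onto each factor. Zariski's main theorem on the normal variety $B$ then gives an isomorphism.

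Finally, uniqueness up to $H$ passes from $B^\circ$ to $B$, since $B^\circ$ is dense.
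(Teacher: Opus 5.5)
Your proof is correct in outline, but it follows a different, partly transcendental route from the paper's.

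\textbf{Where the routes differ.} For $a^{-1}(Z_{\mathrm{reg}})\subset B^\circ$, the paper does not use Cartan linearization or Chevalley--Shephard--Todd. By \eqref{smallhyp}, $a^{-1}(Z_{\mathrm{reg}})\to Z_{\mathrm{reg}}$ is \'etale in codimension one, hence \'etale by Zariski--Nagata purity, so its fibres have $|H|$ points and stabilizers are trivial.

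For universality, the paper proves only what the statement asks: $B^\circ$ has no nontrivial connected finite \'etale cover. Such a cover extends over $B$ by normalization and purity. A connected finite \'etale cover $B'\to B$ is Fano, so Kodaira vanishing and Hirzebruch--Riemann--Roch give $1=\chi(\cO_{B'})=\deg(B'/B)$.

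You instead use topological simple connectivity of Fano manifolds via rational connectedness. That is a much heavier theorem, though it buys the stronger conclusion $\pi_1(Z_{\mathrm{reg}})\simeq H$. Your normalization extension and the density argument for uniqueness up to $H$ match the paper.

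\textbf{Two points need tightening.} First, removing a codimension-two subset from a \emph{normal} variety can change $\pi_1$: the cone $\C^2/\{\pm1\}$ minus its vertex has $\pi_1=\Z/2$. Nor is this what purity of the branch locus says. What you need, and have, is the statement for the \emph{smooth} variety $B$, where the removed set has real codimension at least four.

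Second, and more substantively, covering-space theory only gives you a holomorphic $\tilde\sigma$ on $B^\circ$. Both of your extension arguments need $\tilde\sigma$ to be algebraic: the induced automorphism of $\C(B)$, and the Zariski closure of its graph. You can supply this as follows. The graph of $\tilde\sigma$ is open and closed in the algebraic finite \'etale cover $\{(x,y)\in B^\circ\times B^\circ: a(y)=\sigma(a(x))\}$ of $B^\circ$, so it is a connected component. Analytic and Zariski connected components of a complex variety coincide, so the graph is algebraic. Alternatively, cite the Riemann existence theorem.

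The paper avoids this issue by staying in the algebraic category throughout and lifting $\sigma$ directly to the universal finite \'etale cover.
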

\begin{proof}
The free quotient $a(B^\circ)$ is smooth. Conversely,
$a^{-1}(Z_{\mathrm{reg}})\to Z_{\mathrm{reg}}$ is \'etale in
codimension one by \eqref{smallhyp}, hence \'etale by purity
\cite[Tag~0BMB]{Stacks}. Its fibres have $|H|$ points, so the
stabilizers over $Z_{\mathrm{reg}}$ are trivial.

Every connected finite \'etale cover $B'\to B$ is Fano, and
Kodaira vanishing and Hirzebruch--Riemann--Roch give
\[
 1=\chi(B',\cO_{B'})
 =\deg(B'/B)\chi(B,\cO_B)=\deg(B'/B).
\]
A connected finite \'etale cover of $B^\circ$ extends, by
normalization and purity, to one of $B$, since
$\operatorname{codim}(B\setminus B^\circ)\geq2$. Thus
$B^\circ$ has no nontrivial connected finite \'etale cover,
proving universality.

An automorphism of $Z$ lifts over $Z_{\mathrm{reg}}$ to this
universal cover, uniquely up to $H$. Since $B$ is the normalization
of $Z$ in $\C(B^\circ)$, this lift and its inverse extend by
functoriality of normalization \cite[Tag~035J]{Stacks}. The
uniqueness relation extends from the dense open set $B^\circ$.
\end{proof}

In particular, $a$ is quasi-\'etale. For $b\in B$, write
$H_b\subset H$ for its stabilizer.

\begin{proposition}[The anticanonical contraction]
\label{quotient-anticanonical}
There is an isomorphism of graded rings
\begin{equation}\label{canring}
 R(X,-K_X)\simeq R(B,-K_B)^H.
\end{equation}
The line bundle $\omega_X^{-1}$ is semiample, and
$p:X\to Z$ is its anticanonical contraction. Moreover,
\[
 \bigl(p^{-1}(a(b))\bigr)_{\mathrm{red}}\simeq A/\iota(H_b)
 \qquad(b\in B).
\]
In particular, $p$ has connected fibres.
\end{proposition}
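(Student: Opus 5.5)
We work on the étale cover $\pi:Y=B\times A\to X$. Since $\pi$ is étale, $\pi^*\omega_X=\omega_Y=\omega_B\boxtimes\cO_A$. Hence
\[
 H^0(X,\omega_X^{-n})=H^0(Y,\omega_B^{-n}\boxtimes\cO_A)^H .
\]
By Künneth this equals $\bigl(H^0(B,\omega_B^{-n})\otimes H^0(A,\cO_A)\bigr)^H$. The group $H$ acts on $H^0(A,\cO_A)=\C$ through translations, and translations act trivially on constant functions. So the invariants are $H^0(B,\omega_B^{-n})^H$, where $H$ acts through its linearized action on $\omega_B^{-1}$. The canonical linearization, coming from the action on the tangent bundle, is the one induced by the isomorphism $\pi^*\omega_X\simeq\omega_Y$, so no character twist appears. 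Compatibility with multiplication is clear, and this gives \eqref{canring}.

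Next, since $B$ is Fano, $R(B,-K_B)$ is finitely generated and $\operatorname{Proj}R(B,-K_B)=B$, with some power $-mK_B$ very ample. Taking invariants of a finite group gives $\operatorname{Proj}R(B,-K_B)^H=B/H=Z$, because invariants of the homogeneous coordinate ring give the quotient. To prove semiampleness, I would show that $\omega_X^{-m}$ is globally generated for $m$ divisible enough. For any point $x=[b,a]$, take an $H$-invariant section of $\omega_B^{-m}$ that does not vanish at $b$. One way to get it is to choose a section nonvanishing on the finite orbit $Hb$ and take the norm, i.e. the product of its translates, which lies in degree $m|H|$. This section is nonzero at $x$. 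The resulting morphism $X\to\operatorname{Proj}R(X,-K_X)=Z$ is exactly $p$, since on $Y$ it factors as $\operatorname{pr}_B$ followed by $a$. Thus $p$ is the anticanonical contraction.

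For the fibres, the set-theoretic preimage of $a(b)$ in $Y$ is $Hb\times A$. Its image in $X$ is $(Hb\times A)/H$. The group $H$ acts transitively on the components $\{hb\}\times A$, and the stabilizer of the component $\{b\}\times A$ is $H_b$, acting on $A$ by translation by $\iota(H_b)$. Hence the reduced fibre is $(\{b\}\times A)/H_b\simeq A/\iota(H_b)$, where the identification is as reduced varieties because the action is free. This fibre is connected, which gives the final claim.

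I expect the main obstacle to be the linearization subtlety in the first step: checking that the $H$-action on $\omega_B^{-1}$ induced from $\pi^*\omega_X\simeq\omega_Y$ is the natural one. One must also be careful that the action on $\omega_B^{-n}$ carries no extra character from the translation factor. This holds because $\omega_A$ is trivialized by a translation-invariant form.
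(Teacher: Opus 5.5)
Your proposal is correct and follows essentially the same route as the paper. Both arguments identify $\pi^*\omega_X$ with $\operatorname{pr}_B^*\omega_B$ $H$-equivariantly via a translation-invariant top form on $A$ and take invariant sections; both obtain global generation of $\omega_X^{-m|H|}$ from norms of sections nonvanishing on an orbit, identify $\operatorname{Proj}$ of the invariant ring with $B/H$, and compute the reduced fibre as $\bigl((H\cdot b)\times A\bigr)/H\simeq A/\iota(H_b)$ using that $\pi$ is \'etale.
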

\begin{proof}
A translation-invariant top form on $A$ identifies
$\pi^*\omega_X$ with $\operatorname{pr}_B^*\omega_B$
$H$-equivariantly, using the differential action on $\omega_B$.
Taking sections and invariants gives \eqref{canring}, since
$H^0(A,\cO_A)=\C$.

Choose $m>0$ with $\omega_B^{-m}$ globally generated. For any
finite $H$-orbit, choose a section $s$ nonvanishing on that orbit.
The product $\prod_{h\in H}h\cdot s$ is an invariant section of
$\omega_B^{-m|H|}$, nonvanishing there. These sections descend to
$X$ and show that $\omega_X^{-m|H|}$ is globally generated.
Since $-K_B$ is ample,
\[
 \operatorname{Proj}R(X,-K_X)
 \simeq\operatorname{Proj}\bigl(R(B,-K_B)^H\bigr)=B/H.
\]
The associated map pulls back along $\pi$ to
$a\operatorname{pr}_B$, so it is $p$.

Because $\pi$ is \'etale, the inverse image of the reduced fibre
over $a(b)$ is $(H\cdot b)\times A$ as a reduced scheme.
Its quotient by $H$ is $A/\iota(H_b)$, giving the fibre formula
and connectedness.
\end{proof}

\subsection{Reconstruction under derived equivalence}

Let $P\in\Db(W_1\times W_2)$ induce an equivalence between smooth
connected projective varieties. Put $R_i=R(W_i,-K_{W_i})$ and,
when $-K_{W_i}$ is semiample, let
$p_i:W_i\to Z_i=\operatorname{Proj}R_i$ be its contraction.
We use compatibility with canonical sections, as well as with the
Serre functor, to identify these maps.

\begin{lemma}[Global generation of canonical powers]
\label{canonical-global-generation}
For every $m\in\Z$, the line bundle $\omega_{W_1}^{\otimes m}$ is
globally generated if and only if $\omega_{W_2}^{\otimes m}$ is.
In particular, $-K_{W_1}$ is semiample if and only if $-K_{W_2}$ is.
\end{lemma}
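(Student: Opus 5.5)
The plan is to characterize global generation of $\omega^{\otimes m}$ pointwise, in terms of the Serre functor and the space of canonical sections, and then transport that characterization through $\Phi_P$. Write $S_i=\omega_{W_i}\otimes(-)[n]$ for the Serre functor, with $n=\dim W_1=\dim W_2$; equality of dimensions is standard for derived equivalences. The equivalence $\Phi=\Phi_P$ commutes with Serre functors, so $\Phi S_1^m\simeq S_2^m\Phi$. The natural transformations $\operatorname{id}\to S_i^m[-mn]=M_{\omega_{W_i}^m}$ are identified with $H^0(W_i,\omega_{W_i}^{\otimes m})$. Here I use that natural transformations $\Delta_*\cO\to\Delta_*\omega^m$ between the kernels equal $\Hom_{W_i}(\cO,\omega^m)$, because $L\Delta^*\Delta_*\cO$ has $\cO$ as its top cohomology and the other terms are in negative degrees. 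Conjugation by $\Phi$ then gives a bijection $H^0(W_1,\omega_{W_1}^m)\simeq H^0(W_2,\omega_{W_2}^m)$, natural in $s$ and compatible with the identification of kernels $P\star\Delta_*\omega_{W_1}^m\simeq\Delta_*\omega_{W_2}^m\star P$. This is the ``compatibility with canonical sections'' the paper announces.

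Next I need a categorical criterion for global generation. The line bundle $\omega^m$ is globally generated if and only if, for every closed point $w$, some section $s$ acts nonzero on $k(w)$, i.e.\ the induced map $k(w)\to k(w)\otimes\omega^m$ is nonzero. Equivalently, for every nonzero object $F$ one can test vanishing on supports, but points are not preserved by $\Phi$. So I will reformulate. Global generation fails exactly when the base locus $\operatorname{Bs}=\bigcap_s Z(s)$ is nonempty. For an object $F$, write $s_F:F\to F\otimes\omega^m$ for the map induced by $s$. I claim the following: $\operatorname{Bs}\neq\emptyset$ if and only if there is a nonzero object $F$ with $s_F$ nilpotent for every $s$, with $s_F^{\,k}$ meaning the composite into $F\otimes\omega^{km}$. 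If $w\in\operatorname{Bs}$, take $F=k(w)$; then every $s_F$ is zero. Conversely, suppose $\operatorname{Bs}=\emptyset$ and $F\neq0$. Choose a point $w$ of $\Supp F$ and $s$ with $s(w)\ne0$. Then $s$ is invertible near $w$, so $s_F^k$ restricted near $w$ is an isomorphism and hence nonzero for all $k$. Nilpotence is preserved under conjugation by $\Phi$, because $s\mapsto\Phi s\Phi^{-1}$ is multiplicative in $m$. Indeed, compositions of natural transformations $\operatorname{id}\to M_{\omega^m}\to M_{\omega^{m+m'}}$ correspond to products of sections, and $\Phi$ respects these composites up to the canonical isomorphisms. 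Applying the criterion to $F$ and to $\Phi(F)$ then gives the equivalence for every $m$.

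For semiampleness, note that $-K$ is semiample if and only if some power $\omega^{-m}$ with $m>0$ is globally generated, so the first part applies with $-m$. The main obstacle is the compatibility of the identifications across different $m$. I need the conjugated transformations to satisfy $\Phi(s\cdot t)=\Phi(s)\cdot\Phi(t)$ for products of sections of different powers, which amounts to checking that the isomorphisms $\Phi S_1\simeq S_2\Phi$ are chosen coherently. I would handle this by fixing one isomorphism $\Phi M_{\omega_1}\simeq M_{\omega_2}\Phi$, coming from the Serre functor, and defining all higher ones as its iterates. Multiplicativity then holds by construction, because $s\cdot t$ corresponds to the composite $(M^{m}t)\circ s$ on the functor side.
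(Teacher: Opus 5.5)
Your proposal is correct and follows essentially the paper's route: you transport sections of $\omega^{\otimes m}$ through conjugation by $\Phi_P$, compatibly with the multiplication maps on objects (the paper cites \cite[Lemma~3.3]{LO} for this, while you derive it from the Serre-functor kernel isomorphism), and you then get a contradiction from a skyscraper at a base point, whose image is a nonzero object on which every section acts by zero although some section is invertible near a point of its support. The only difference is that nilpotence is superfluous---``acts by zero'' already gives your criterion in both directions---so the multiplicativity across different powers that you flag as the main obstacle is not needed for this lemma, only for the graded-ring isomorphism used afterwards.
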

\begin{proof}
By \cite[Lemma~3.3]{LO}, conjugation by $\Phi_P$ identifies the
functors $-\otimes\omega_{W_i}^n$ and their morphisms induced by
sections, for every $n\in\Z$. We may choose the resulting maps
\[
 \rho_n:H^0(W_1,\omega_{W_1}^n)
 \xrightarrow{\sim}H^0(W_2,\omega_{W_2}^n)
\]
compatibly with multiplication. In particular, multiplication by
$s$ on $E$ corresponds to multiplication by $\rho_n(s)$ on
$\Phi_P(E)$.

Suppose $s_1,\ldots,s_\ell$ generate $\omega_{W_1}^m$.
If all $\rho_m(s_j)$ vanished at $w\in W_2$, their multiplication
maps on $k(w)$ would vanish. Hence every $s_j$ would act by zero
on the nonzero object $E=\Phi_P^{-1}(k(w))$. On the open set
where $s_j$ is nonvanishing this map is invertible, so $E$ would
vanish there. These opens cover $W_1$, a contradiction. The
inverse equivalence gives the converse.
\end{proof}

\begin{lemma}\label{antican}
Assume $-K_{W_1}$ is semiample. Then $\Phi_P$ induces a graded-ring
isomorphism $R_1\simeq R_2$ and a corresponding isomorphism
$\sigma:Z_1\xrightarrow{\sim}Z_2$ satisfying
\begin{equation}\label{supportbase}
 \Supp(P)\subset
 \{(u,v)\in W_1\times W_2:p_2(v)=\sigma(p_1(u))\}.
\end{equation}
\end{lemma}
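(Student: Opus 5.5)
We first build the ring isomorphism from the construction in Lemma~\ref{canonical-global-generation}. By \cite[Lemma~3.3]{LO}, conjugation by $\Phi_P$ carries the functor $-\otimes\omega_{W_1}^n$ to $-\otimes\omega_{W_2}^n$. It also carries the natural transformation given by multiplication by $s\in H^0(W_1,\omega_{W_1}^n)$ to multiplication by $\rho_n(s)$. The underlying reason is that $\Phi_P$ commutes with the Serre functors $S_{W_i}=-\otimes\omega_{W_i}[\dim W_i]$, and $\dim W_1=\dim W_2$. Composing natural transformations shows that $\rho=\bigoplus_n\rho_n$ is multiplicative, so $\rho:R_1\to R_2$ is a graded ring isomorphism. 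By Lemma~\ref{canonical-global-generation}, $-K_{W_2}$ is semiample as well. Taking $\operatorname{Proj}$ therefore gives $\sigma:Z_1\xrightarrow{\sim}Z_2$, characterized by $\sigma^{-1}(D_+(\rho(s)))=D_+(s)$ for homogeneous $s$ of positive degree. Here $p_i^{-1}(D_+(s))$ is the nonvanishing locus $W_{i,s}$ of $s$.

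For the support statement, it suffices to show the following. Let $s\in R_1$ be homogeneous of degree $n>0$, and let $(u,v)\in\Supp P$. Then $s(u)\neq0$ if and only if $\rho(s)(v)\neq0$. Indeed, the points of $Z_i$ are separated by the basic opens $D_+(s)$, since $Z_i$ is projective over $R_{i,0}=\C$. So if $p_2(v)\neq\sigma(p_1(u))$, some $s$ has $\rho(s)$ vanishing at one of these points and not at the other.

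To prove the equivalence, we express the compatibility at the level of kernels. Multiplication by $s$ on the kernel side corresponds to the morphism $P\to P\otimes p_1^*\omega_{W_1}^{n}$ induced by $p_1^*s$; this uses $\Phi_P(E\otimes\omega^n)$ and the kernel of $\Phi_P\circ M_{\omega^n}$. Multiplication by $\rho(s)$ corresponds to $P\to P\otimes p_2^*\omega_{W_2}^n$ induced by $p_2^*\rho(s)$. The relation $\Phi_P\circ(\cdot s)\simeq(\cdot\rho(s))\circ\Phi_P$ then lifts, via uniqueness of kernels for morphisms of functors, to a commuting square. Concretely, under the isomorphism
\[
P\otimes p_1^*\omega_{W_1}^n\simeq P\otimes p_2^*\omega_{W_2}^n
\]
coming from the kernel version of Serre-functor compatibility, $p_1^*s$ and $p_2^*\rho(s)$ agree.

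Now restrict to the open set $W_{1,s}\times W_2$. There $p_1^*s$ is invertible, so $p_2^*\rho(s)\cdot\id_P$ is an isomorphism. At a point $(u,v)\in\Supp P$ with $s(u)\ne0$, derived Nakayama applied to the top nonzero cohomology sheaf then forces $\rho(s)(v)\ne0$. The converse follows by symmetry. Alternatively, one can avoid passing through kernels of natural transformations by arguing pointwise. Take $E=\Phi_P^{-1}(k(v))$, whose support is $\{u:(u,v)\in\Supp P\}$. If $\rho(s)(v)=0$, then $s$ acts by zero on $E$, so $E$ vanishes on $W_{1,s}$. This is the argument of Lemma~\ref{canonical-global-generation}, and it gives exactly the required implication.

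The main obstacle is the claim that $\rho$ is multiplicative and is compatible with the action of sections on objects. We plan to take this directly from \cite[Lemma~3.3]{LO}, as in Lemma~\ref{canonical-global-generation}. The pointwise argument via $\Phi_P^{-1}(k(v))$ also needs the identification $\Supp\Phi_P^{-1}(k(v))=\{u:(u,v)\in\Supp P\}$. This follows because $\Phi_P^{-1}$ has kernel $P^\vee\otimes p_2^*\omega_{W_2}[\dim W_2]$, and the derived restriction of a complex to a fibre is nonzero exactly over its support.
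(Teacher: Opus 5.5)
Your proposal is correct, and your pointwise route (sections of $\omega^{-n}$ acting by zero on the image of a skyscraper, plus derived Nakayama to identify that image's support with a fibre of $\Supp(P)$) is essentially the paper's proof; the paper uses $\Phi_P(k(u))$ and compares evaluation hyperplanes in one sufficiently divisible degree, where you instead separate points of $Z_2$ by the basic opens $D_+(t)$ in all degrees. Rely on that route rather than the kernel-level one, because there is no general uniqueness of kernels for natural transformations between Fourier--Mukai functors. The kernel-level square commutes only if you define $\rho(s)$ directly, by conjugating $s:\cO_{\Delta}\to\Delta_*\omega_{W_1}^{-n}$ through $K\mapsto P\star K\star P'$ for an inverse kernel $P'$.
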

\begin{proof}
The preceding lemma makes $-K_{W_2}$ semiample, and the compatible
maps $\rho_{-k}$ give $\tau:R_1\xrightarrow{\sim}R_2$.
Set $\sigma=\operatorname{Proj}(\tau^{-1})$. Choose $N>0$
sufficiently divisible that $\omega_{W_i}^{-N}$ is generated and
its complete linear system is $p_i$ followed by a projective
embedding. Write $V_i=H^0(W_i,\omega_{W_i}^{-N})$.

For $u\in W_1$, put $F=\Phi_P(k(u))$. A section $s\in V_1$
vanishing at $u$ acts by zero on $k(u)$, so $\rho_{-N}(s)$ acts
by zero on $F$. It must therefore vanish at every
$v\in\Supp(F)$: otherwise multiplication by it would be both
zero and invertible near $v$. The evaluation hyperplanes satisfy
\[
 \rho_{-N}(\ker\operatorname{ev}_{1,u})
 \subseteq\ker\operatorname{ev}_{2,v}.
\]
Both have codimension one, so equality holds, giving
$p_2(v)=\sigma(p_1(u))$.

For $i_u:W_2\hookrightarrow W_1\times W_2$, the kernel formula
and projection formula give $F\simeq Li_u^*P$. Since $P$ is
perfect, derived Nakayama identifies $\Supp(F)$ with the fibre
of $\Supp(P)$ over $u$ \cite[Tag~0BCD]{Stacks}. Thus the equality
of anticanonical images holds at every closed point of
$\Supp(P)$, proving \eqref{supportbase}.
\end{proof}

For $i=1,2$, let $B_i$ be smooth connected projective Fano varieties,
$A_i$ abelian varieties, and $H_i$ finite abelian groups acting
on $B_i$. Fix embeddings $\iota_i:H_i\hookrightarrow A_i(\C)$
and put
\[
 X_i=(B_i\times A_i)/H_i,\qquad
 h\cdot(b,a)=(h\cdot b,a+\iota_i(h)).
\]
Assume $\operatorname{codim}_{B_i}B_i^h\geq2$ for $h\ne0$.

\begin{proposition}[Equivariant reconstruction of the Fano factor]
\label{reconstruction}
If $\Db(X_1)\simeq\Db(X_2)$, there are an isomorphism
$f:B_1\xrightarrow{\sim}B_2$ and a group isomorphism
$\nu:H_1\xrightarrow{\sim}H_2$ with
\[
 f(h\cdot b)=\nu(h)\cdot f(b)
 \qquad(h\in H_1,\ b\in B_1).
\]
\end{proposition}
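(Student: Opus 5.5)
The plan is to pass from the derived equivalence to an isomorphism of anticanonical models, and then lift that isomorphism through the quasi-\'etale quotient maps using Lemma~\ref{small}.

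First, Proposition~\ref{quotient-anticanonical} applies to each $X_i$: $-K_{X_i}$ is semiample and its contraction is $p_i:X_i\to Z_i=B_i/H_i$. Let $\Phi_P:\Db(X_1)\to\Db(X_2)$ be an equivalence. Lemma~\ref{antican} then gives a graded-ring isomorphism $R(X_1,-K_{X_1})\simeq R(X_2,-K_{X_2})$, and hence an isomorphism
\[
 \sigma:Z_1=B_1/H_1\xrightarrow{\sim}Z_2=B_2/H_2 .
\]
The support condition \eqref{supportbase} is not needed for this step; it is the ring isomorphism that matters. By \eqref{canring}, this is equivalently an isomorphism $R(B_1,-K_{B_1})^{H_1}\simeq R(B_2,-K_{B_2})^{H_2}$.

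Second, I would lift $\sigma$ to the Fano factors. By Lemma~\ref{small}, each $a_i:B_i^\circ\to Z_{i,\mathrm{reg}}$ is a universal finite \'etale cover with group $H_i$, and $a_i^{-1}(Z_{i,\mathrm{reg}})=B_i^\circ$. The isomorphism $\sigma$ carries $Z_{1,\mathrm{reg}}$ onto $Z_{2,\mathrm{reg}}$. So the composite $B_1^\circ\to Z_{1,\mathrm{reg}}\to Z_{2,\mathrm{reg}}$ is a universal cover of $Z_{2,\mathrm{reg}}$, and by uniqueness of universal covers it is isomorphic over $Z_{2,\mathrm{reg}}$ to $a_2$. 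This yields an isomorphism $f^\circ:B_1^\circ\to B_2^\circ$ with $a_2f^\circ=\sigma a_1$. The cleanest way to organize this is the argument of Lemma~\ref{small} with two different varieties: each $B_i$ is the normalization of $Z_i$ in $\C(B_i^\circ)$, so $f^\circ$ and its inverse extend to an isomorphism $f:B_1\xrightarrow{\sim}B_2$ by functoriality of normalization.

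Third, I would define $\nu$. The deck groups of the two universal covers are identified by conjugation: for $h\in H_1$, the map $f\circ h\circ f^{-1}$ is an automorphism of $B_2$ over $Z_2$. On the connected cover $B_2^\circ$, deck transformations of the Galois cover $a_2$ are exactly the elements of $H_2$, using that the action is faithful by \eqref{smallhyp}. Hence $f h f^{-1}=\nu(h)$ for a unique $\nu(h)\in H_2$, first on $B_2^\circ$ and then everywhere by density and separatedness. The map $h\mapsto\nu(h)$ is a group homomorphism; it is bijective because the reverse construction gives an inverse, or alternatively because both groups have order equal to the degree of $a_i$. This gives $f(h\cdot b)=\nu(h)\cdot f(b)$.

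The main obstacle is confirming that Lemma~\ref{antican} really produces an isomorphism of the \emph{varieties} $Z_i$, and that this isomorphism preserves the smooth loci. The first point holds because $\operatorname{Proj}$ of isomorphic graded rings gives isomorphic schemes, with no Veronese subtleties since the whole ring is identified. The second point is automatic for any isomorphism of varieties. The remaining steps are formal consequences of Lemma~\ref{small}.
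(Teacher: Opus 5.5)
Your proposal is correct and follows the paper's proof essentially step for step. Both arguments obtain $\sigma:B_1/H_1\xrightarrow{\sim}B_2/H_2$ from Proposition~\ref{quotient-anticanonical} and Lemma~\ref{antican}, then lift it over the regular loci using the universal finite \'etale covers of Lemma~\ref{small} and extend to $B_1\xrightarrow{\sim}B_2$ by normalization. Finally, both identify the deck groups by conjugation and extend the equivariance relation by density.
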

\begin{proof}
Proposition~\ref{quotient-anticanonical} and Lemma~\ref{antican}
give an isomorphism $\sigma:B_1/H_1\xrightarrow{\sim}B_2/H_2$.
By Lemma~\ref{small}, its restriction to the regular loci lifts
to an isomorphism $f^\circ:B_1^\circ\xrightarrow{\sim}B_2^\circ$
of their universal finite \'etale covers. Normalization extends
this to $f:B_1\xrightarrow{\sim}B_2$, as in the proof of that
lemma. Conjugation by $f^\circ$ identifies the deck groups, and
the resulting equivariance relation extends to $B_1$ by density.
\end{proof}

\begin{proof}[Proof of Theorem~\ref{moduli-derived-classification}]
Choose determinants $L,L'$ of degrees $d,d'$. The covers
\eqref{cover}, the Fano property \eqref{modpicard}, and
Lemma~\ref{modfixed} allow us to apply
Proposition~\ref{reconstruction} to a derived equivalence of the
two moduli spaces. It gives
$SU_C(r,L)\simeq SU_{C'}(r',L')$. Since both genera are at least
four, \cite[Corollary~2.12]{AB} gives $C\simeq C'$, $r=r'$,
and $d'\equiv\pm d\pmod r$.

Conversely, choose $\sigma:C'\xrightarrow{\sim}C$ and
$\epsilon\in\{1,-1\}$ with $d'\equiv\epsilon d\pmod r$.
For a line bundle $T$ on $C'$ of degree $(d'-\epsilon d)/r$,
the assignment
\[
 E\longmapsto
 \begin{cases}
  \sigma^*E\otimes T,&\epsilon=1,\\
  (\sigma^*E)^\vee\otimes T,&\epsilon=-1
 \end{cases}
\]
is an isomorphism of moduli spaces: it preserves stability and
has an inverse of the same form. An isomorphism in turn induces
a derived equivalence.
\end{proof}

\begin{corollary}[Recovery of the curve and rank]\label{moduli-torelli}
Let $C,C'$ be smooth connected projective curves of genus at least
two, and let $r,r'\geq2$ and $d,d'\in\Z$ satisfy
$\gcd(r,d)=\gcd(r',d')=1$. If
$\Db(U_C(r,d))\simeq\Db(U_{C'}(r',d'))$, then $C\simeq C'$
and $r=r'$.
\end{corollary}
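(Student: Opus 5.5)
The plan is to rerun the first half of the proof of Theorem~\ref{moduli-derived-classification}, which does not use the genus bound, and then replace the appeal to \cite[Corollary~2.12]{AB} by two elementary numerical invariants and a classical Torelli theorem for the Fano fibre.

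\emph{Step 1: equivariant reconstruction.} Choose $L\in\Pic^d(C)$ and $L'\in\Pic^{d'}(C')$, and put $N=SU_C(r,L)$, $N'=SU_{C'}(r',L')$, $H=J(C)[r]$ and $H'=J(C')[r']$. The following hold for all $g\geq2$:
\begin{enumerate}
\item the covers \eqref{cover} present both moduli spaces as $(N\times J)/H$;
\item $N$ and $N'$ are Fano by \eqref{modpicard};
\item the codimension bound \eqref{smallhyp} holds by Lemma~\ref{modfixed}, whose proof includes the case $(g,r)=(2,2)$.
\end{enumerate}
Proposition~\ref{reconstruction} therefore applies to a derived equivalence of the two moduli spaces. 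It gives an isomorphism $N\simeq N'$ and a group isomorphism $H\simeq H'$.

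\emph{Step 2: the rank and the genus.} Derived equivalent smooth projective varieties have the same dimension, for example because the Serre functor $-\otimes\omega[\dim]$ is intrinsic. Since $\dim U_C(r,d)=r^2(g-1)+1$, we get
\[
 r^2(g-1)=r'^2(g'-1).
\]
From $N\simeq N'$ we get
\[
 (r^2-1)(g-1)=(r'^2-1)(g'-1).
\]
Subtracting the second equation from the first gives $g=g'$. Since $g-1\neq0$, the first equation then gives $r=r'$. As a consistency check, $|H|=r^{2g}$ and $|H'|=r'^{2g'}$ agree.

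\emph{Step 3: the curve.} It remains to recover $C$ from $N\simeq N'$. I would use the intermediate Jacobian. By Mumford--Newstead for $r=2$, and Narasimhan--Ramanan in general, $H^3(N,\Z)$ with its cup-product pairing and Hodge structure yields a principally polarized abelian variety isomorphic to $(J(C),\Theta)$. This holds for every $g\geq2$ and every coprime $(r,d)$. An isomorphism $N\simeq N'$ induces an isomorphism of integral polarized Hodge structures on $H^3$. Hence $(J(C),\Theta)\simeq(J(C'),\Theta')$, and the classical Torelli theorem gives $C\simeq C'$.

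\emph{Main obstacle.} The obstacle is this last step. One must make sure the cited identification of $H^3(N)$ with $H^1(C)$ is an isomorphism of polarized integral Hodge structures, not merely rational ones, and that it holds in low genus, including $g=2$. If that is not available in the required form, I would try to recover $J(C)$ from $X$ instead. For instance, $\Pic^0\times\Alb$ is a derived invariant up to isogeny-free isomorphism (Rouquier). Lemma~\ref{quotient-albanese} identifies $\Alb(M)\simeq J/J[r]\simeq J$. Recovering the principal polarization on $J$ would then need extra data, so the $H^3(N)$ route is preferable.
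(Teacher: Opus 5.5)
Your proof is correct. Step~1 is exactly the paper's first step; the two arguments differ in how the curve and rank are extracted from $N\simeq N'$.

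The paper simply cites the fixed-determinant Torelli theorem \cite[Remark~5.3]{ABGM}. That theorem recovers the curve and the rank together from the Fano fibre, and coprimality excludes its one exception (both genera and ranks equal to two, with even determinant degrees).

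You separate the two recoveries. The genus and rank come from data the derived equivalence already supplies. Your dimension count is valid. More directly, the group isomorphism $H\simeq H'$ from Proposition~\ref{reconstruction} gives $r=r'$ by comparing exponents, and then $g=g'$ by comparing the orders $r^{2g}$. So what you call a consistency check is itself a proof of Step~2.

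Only the curve then needs external input. For this you use the classical coprime-case Torelli theorem of Mumford--Newstead and Narasimhan--Ramanan, via the intermediate Jacobian. The point you flag as the obstacle is manageable:
\begin{itemize}
\item $h^{3,0}(N)=0$ because $N$ is Fano, so $H^3(N,\Z)$ has level one.
\item The polarization is intrinsic. Since $\Pic(N)=\Z[\Theta_N]$, any isomorphism $N\simeq N'$ pulls $\Theta_{N'}$ back to $\Theta_N$, and so preserves the pairing $\int_N\alpha\cup\beta\cup\Theta_N^{\dim N-3}$.
\item The cited results identify this polarized integral Hodge structure with that of $(J(C),\Theta)$ for every $g\geq2$. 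They may do so only up to a fixed positive multiple of the form, but a discriminant comparison removes that multiple.
\end{itemize}

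What each route buys: yours makes rank recovery elementary and needs only an older, coprime-specific theorem about the curve. The paper's single citation is shorter and also covers noncoprime fibres. You are right to discard the $\Alb(M)$ fallback, since it loses the principal polarization that the classical Torelli theorem requires.
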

\begin{proof}
Proposition~\ref{reconstruction} again gives
$SU_C(r,L)\simeq SU_{C'}(r',L')$.
The fixed-determinant Torelli theorem in \cite[Remark~5.3]{ABGM}
recovers the curve and rank, except when both genera and ranks
are two and both determinant degrees are even. Coprimality
excludes that exception.
\end{proof}

\subsection{The geometry of an arbitrary partner}

Let $B$ be a smooth connected projective Fano variety and $H$ a
finite abelian group acting on $B$ as in \eqref{smallhyp}.
Write $a:B\to Z=B/H$, and assume $Z$ is terminal.

\begin{lemma}\label{classification-ramification}\label{crepant-quotient}
Let $F$ be a smooth connected projective variety and
$q:F\to Z$ a surjective generically finite morphism satisfying
$K_F\sim_{\mathbb Q}q^*K_Z$. Then there is an isomorphism
$f:F\xrightarrow{\sim}B$ with $q=a\circ f$.
\end{lemma}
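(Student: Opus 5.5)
Since $a:B\to Z$ is quasi-\'etale by Lemma~\ref{small}, we have $K_B=a^*K_Z$. Likewise $K_F\sim_{\mathbb Q}q^*K_Z$ says that $q$ is crepant. The plan is to show that the Stein factorization of $q$ is trivial, so that $q$ is finite. We then show that $q$ is quasi-\'etale, so it is dominated by the universal cover $B^\circ\to Z_{\mathrm{reg}}$, and Fano rigidity ($\chi(\cO)=1$) pins it down to $B$ itself.

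\textbf{Step 1: $q$ is finite.} Take the Stein factorization $F\xrightarrow{q'}F'\xrightarrow{q''}Z$, with $q'$ birational and $q''$ finite, and $F'$ normal. Let $D$ be a $q'$-exceptional prime divisor. Take the normalization $\tilde Z$ of $Z$ in $\C(F)$ and compare it with $F'$, which plays that role. Since $Z$ is terminal, the discrepancy of any exceptional divisor over $Z$ is positive. The precise point is that terminality must pass to $F'$. Here $q''$ is finite, so by the ramification formula $K_{F'}=q''^*K_Z+R$ with $R\geq0$, and $K_F\sim_{\mathbb Q}q^*K_Z$ forces $K_F=q'^*(K_{F'}-R)$.
\begin{itemize}
\item If $R\neq0$, pushing forward by $q'$ gives $q'_*K_F=K_{F'}-R$. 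This contradicts $q'_*K_F=K_{F'}$ for a birational morphism. Hence $R=0$, so $q''$ is quasi-\'etale and $K_F=q'^*K_{F'}$.
\item Since $q''$ is quasi-\'etale, $F'$ is terminal, by the standard behaviour of discrepancies under finite quasi-\'etale covers (Koll\'ar--Mori 5.20). Then $K_F=q'^*K_{F'}$ with $F$ smooth forces $q'$ to have no exceptional divisors, because they would have discrepancy $0$.
\end{itemize}
A birational morphism from a smooth variety with no exceptional divisors onto a $\mathbb Q$-factorial variety is an isomorphism, by van der Waerden purity. $\mathbb Q$-factoriality of $F'$ follows as in Lemma~\ref{modfixed}, or we apply the purity argument to a pulled-back ample divisor. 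So $q$ is finite and quasi-\'etale. I expect this step, especially the inheritance of terminality and $\mathbb Q$-factoriality by $F'$, to be the main obstacle.

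\textbf{Step 2: restricting over $Z_{\mathrm{reg}}$.} By purity of the branch locus, $q^{-1}(Z_{\mathrm{reg}})\to Z_{\mathrm{reg}}$ is finite \'etale, and it is connected since $F$ is connected. Lemma~\ref{small} says $B^\circ\to Z_{\mathrm{reg}}$ is a universal finite \'etale Galois cover with group $H$. Hence $q^{-1}(Z_{\mathrm{reg}})\simeq B^\circ/K$ for a subgroup $K\subset H$, compatibly with the maps to $Z_{\mathrm{reg}}$. Taking normalizations of $Z$ in the function fields, $F$ is the normalization of $Z$ in $\C(B)^K$. Therefore $F\simeq B/K$ over $Z$.

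\textbf{Step 3: forcing $K=1$.} The quotient $B/K$ is smooth by hypothesis. By the codimension condition \eqref{smallhyp}, the map $B\to B/K$ is \'etale in codimension one, and purity on the smooth target $B/K$ makes it \'etale. Now $F=B/K$ is smooth with $-K_F$ ample, since $K_F$ pulls back to $K_B$ under a finite map. Kodaira vanishing and Hirzebruch--Riemann--Roch then give $1=\chi(\cO_B)=|K|\chi(\cO_F)=|K|$, as in the proof of Lemma~\ref{small}. Thus $K=1$, and the resulting isomorphism $f:F\xrightarrow{\sim}B$ satisfies $q=a\circ f$.
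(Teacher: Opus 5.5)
\textbf{Overall route.} Your route is the paper's. You Stein-factorize $q$, show the finite part $q''$ is quasi-\'etale by pushing forward and using $R\geq0$, and use terminality plus $\mathbb Q$-factoriality of $F'$ to make $q'$ an isomorphism. You then identify the result with $B/K$ through the universal cover $B^\circ\to Z_{\mathrm{reg}}$ and kill $K$ with $\chi(\cO_B)=1$. Getting terminality of $F'$ from the discrepancy formula $a(E',F')+1=r\,(a(E,Z)+1)$ for quasi-\'etale covers is a valid and more general substitute for the paper's Reid--Tai age count on $B/K$.

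\textbf{A slip: equality versus $\mathbb Q$-linear equivalence.} The hypothesis does not give the equality $K_F=q'^*(K_{F'}-R)$, only $\sim_{\mathbb Q}$. The paper spends its first paragraph upgrading to an equality via a resolution. Your argument survives without that upgrade. Pushing forward yields $R\sim_{\mathbb Q}0$, and later $\sum a_iE_i\sim_{\mathbb Q}0$ with all $a_i>0$. An effective $\mathbb Q$-linearly trivial divisor on a normal projective variety is zero, but you should say so explicitly.

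\textbf{The gap: $\mathbb Q$-factoriality of $F'$.} You invoke this at the end of Step~1, and it is not available there.
\begin{enumerate}
\item It does not come from $Z$. $\mathbb Q$-factoriality descends along finite maps but is not inherited by quasi-\'etale covers in general.
\item It cannot be dropped. The small resolution of the threefold node $xy=zw$ is a crepant small contraction of a smooth variety onto a terminal one.
\item The norm argument of Lemma~\ref{modfixed} needs a finite map from a smooth variety onto $F'$, that is, $F'\simeq B/K$. You only establish this in Step~2, and there only for $F$ after finiteness of $q$ is known.
\item Your alternative via a pulled-back ample divisor does not help. Pullbacks from $F'$ are $q'$-trivial. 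Using a $q'$-ample divisor instead requires its pushforward to be $\mathbb Q$-Cartier, which is exactly the missing property.
\end{enumerate}

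\textbf{The repair} is the paper's ordering. Once $R=0$, apply your Step~2 to the finite quasi-\'etale map $q''$, using purity over $Z_{\mathrm{reg}}$, universality of $B^\circ$, and normalization. This gives $F'\simeq B/K$ over $Z$. The norm argument then gives $\mathbb Q$-factoriality. Your exclusion of exceptional divisors, together with purity of the exceptional locus over a $\mathbb Q$-factorial target, makes $q'$ an isomorphism. Step~3 then goes through as written.
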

\begin{proof}
We first replace the assumed linear equivalence by equality of
compatible canonical divisors. Choose a resolution
$b:\widetilde Z\to Z$ and resolve $F\dashrightarrow\widetilde Z$
to obtain $\alpha:T\to F$ birational and
$\widetilde q:T\to\widetilde Z$ generically finite, with
$b\widetilde q=q\alpha$. Define all canonical divisors by
pulling back a rational top form on $Z$. Then
\[
 K_{\widetilde Z}=b^*K_Z+E_b,\qquad
 K_T=\widetilde q^*K_{\widetilde Z}+R
     =\alpha^*K_F+E_\alpha,
\]
where $E_b\geq0$ by terminality, $R\geq0$ is the ramification
divisor, and $E_\alpha$ is $\alpha$-exceptional. Pushing down
gives
\[
 K_F-q^*K_Z=\alpha_*(\widetilde q^*E_b+R)\geq0.
\]
An effective $\mathbb Q$-linearly trivial divisor on a projective
variety is zero, so $K_F=q^*K_Z$.

In the Stein factorization $F\xrightarrow{v}Z'\xrightarrow{u}Z$,
the map $v$ is projective birational and $u$ is finite, with $Z'$
normal. Since $v$ is an isomorphism over the codimension-one
points of $Z'$, pushing forward gives
\[
 K_{Z'}=u^*K_Z,\qquad K_F=v^*K_{Z'}.
\]
The first equality and the ramification formula show that $u$ is
unramified in codimension one. Purity makes
$u^{-1}(Z_{\mathrm{reg}})\to Z_{\mathrm{reg}}$ \'etale.
It is connected, and Lemma~\ref{small} identifies it with
$B^\circ/K$ for some subgroup $K\subset H$. Normalization then
gives $Z'\simeq B/K$ over $Z$.

The quotient $B/K$ is $\mathbb Q$-factorial by the local norm
argument used in Lemma~\ref{modfixed}. It is also terminal:
at $x\in B$, the stabilizer $K_x$ is a subgroup of $H_x$, and
\eqref{smallhyp} excludes pseudoreflections in both tangent
representations. Terminality of $B/H$ gives
$\operatorname{age}(h;T_xB)>1$ for every $0\ne h\in H_x$;
the same inequalities for $K_x$ give terminality of $B/K$
by \cite[Theorem~2.3(iii)]{ReidTai}.

These two properties have distinct roles in excluding a
nontrivial $v$. Terminality and $K_F=v^*K_{Z'}$ exclude
$v$-exceptional prime divisors. If $D$ is a $v$-ample Cartier
divisor, $\mathbb Q$-factoriality makes $D'=v_*D$
$\mathbb Q$-Cartier. The divisors $D$ and $v^*D'$ agree away
from the exceptional locus, hence agree everywhere because that
locus has no divisorial component. Any contracted curve would
then have $D$-degree zero, contrary to relative ampleness.
Thus $v$ is finite and birational with normal target, hence an
isomorphism.

We have $F\simeq B/K$. The quotient $\rho:B\to F$ is
quasi-\'etale by \eqref{smallhyp}, and purity makes it \'etale
because $F$ is smooth. Hirzebruch--Riemann--Roch now gives
\[
 1=\chi(B,\cO_B)=|K|\chi(F,\cO_F),
\]
where the first equality is Kodaira vanishing. Hence $|K|=1$;
$f=\rho^{-1}$ is the required isomorphism over $Z$.
\end{proof}

Return to $M=U_C(r,d)$, $N=SU_C(r,L)$, $J=\Pic^0(C)$, and
$H=J[r]$, with $g\geq3$, $r\geq2$, and $\gcd(r,d)=1$.
The next result requires no assumption on $\End(J)$ or $\Aut(C)$.

\begin{theorem}\label{classification-geometric-partner}
For every Fourier--Mukai partner $W$ of $M$, there are a
$g$-dimensional abelian variety $A$ and a group isomorphism
$\iota:H\xrightarrow{\sim}A[r]$ such that
\[
 W\simeq(N\times A)/H,\qquad
 \eta\cdot(E,a)=(E\otimes\eta,a+\iota(\eta)).
\]
\end{theorem}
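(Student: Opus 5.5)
Let $W$ be a Fourier--Mukai partner of $M$, with kernel $P$. The plan is to rebuild the product-quotient structure of $W$ from three derived invariants: the anticanonical contraction, the Albanese morphism, and the character line bundles. Lemma~\ref{modfixed}, \eqref{modpicard} and Proposition~\ref{quotient-anticanonical} show that $\omega_M^{-1}$ is semiample with contraction $p:M\to Z=N/H$. Moreover $Z$ is terminal because $g\geq3$. By Lemma~\ref{canonical-global-generation}, $\omega_W^{-1}$ is semiample. Lemma~\ref{antican} then gives $p_W:W\to Z_W$ and an isomorphism $\sigma:Z\xrightarrow{\sim}Z_W$. We use $\sigma$ to identify $Z_W$ with $Z$.

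Next we need an étale cover of $W$ that splits as a product. Derived equivalence preserves $\dim$, and it also preserves $h^{1,0}$ through Hochschild homology (or through Rouquier's isomorphism $\Pic^0\times\Aut^0$). So $\Alb(W)$ has dimension $g$. By Rouquier, $\Phi_P$ induces an isomorphism $\Pic^0(M)\times\Aut^0(M)\simeq\Pic^0(W)\times\Aut^0(W)$.

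For $M$, Lemma~\ref{quotient-albanese} shows that $\Aut^0(M)\simeq J/H$ acts by translation on the $A$-factor. The support constraint \eqref{supportbase} forces the image of $\Aut^0(M)$ to preserve the fibres of $p_W$. I expect $\Aut^0(W)$ to be a $g$-dimensional abelian variety $A'$ acting freely along the fibres of $p_W$, which are abelian varieties by the fibre formula on the $M$ side. Fixing a point $w_0$ in a general fibre, the orbit map gives $\Aut^0(W)\to W$. Composing with the Albanese morphism $\delta_W:W\to\Alb(W)=:\bar A$ gives an isogeny $\Aut^0(W)\to\bar A$. From this we define the étale cover $W'=W\times_{\bar A}A$, where $A\to\bar A$ is chosen so that $W'$ splits.

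The construction should be carried out concretely as follows. The anticanonical ring is preserved, so a general fibre $F_W$ of $p_W$ is an abelian variety. Consider the action map $F\times\Aut^0(W)\to W$ over $Z$ for a suitable section-like subvariety $F$; better, use $(p_W,\delta_W):W\to Z\times\bar A$. This map is generically finite and surjective, since fibres of $p_W$ map isogenously to $\bar A$.

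Take the Stein factorization, and let $\widetilde W$ be the normalization of a component of $W\times_{\bar A}\bar A'$ over a suitable isogeny. We want $\widetilde W\simeq F'\times A$ with $F'\to Z$ generically finite. This holds because $\Aut^0(W)$-orbits trivialize the $\delta_W$-direction, and $F'$ is a fibre of $\widetilde W\to A$. Then $K_{F'}\sim_{\mathbb Q}q^*K_Z$, because $\omega_W$ is pulled back from $Z$ up to torsion: the graded ring $R(W,-K_W)\simeq R(M,-K_M)$ is identified. Hence Lemma~\ref{crepant-quotient} identifies $F'\simeq N$ over $Z$.

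This gives $W\simeq(N\times A)/G$ for a finite group $G$ acting diagonally, through automorphisms of $N$ over $Z$ and translations of $A$. By Lemma~\ref{small}, automorphisms of $N$ over $Z$ are exactly $H$, so $G$ maps to $H$. Freeness together with $\Alb$ considerations, and the injectivity of $G\to A$ from Lemma~\ref{quotient-albanese}, identify $G$ with a subgroup of $H$. That $G$ is all of $H$ follows by comparing the contraction $W\to Z=N/H$ with $(N\times A)/G\to N/G$: the base must be $N/H$, so $G\to H$ is onto.

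Finally, $G\to A$ is an embedding $\iota:H\hookrightarrow A$. We need $\iota(H)=A[r]$, and the plan is a counting argument. The character group $T\subset\Pic^0$ is preserved by the Rouquier isomorphism, because it is the kernel of the torsion translations acting trivially on the $B$-part. On the $M$ side $T$ is isomorphic to $H^\vee$, so $|A\text{-kernel}|=r^{2g}$. The same Rouquier isomorphism should match $\iota(H)\subset A[r]$, since $H$ is killed by $r$. Comparing orders, $|H|=r^{2g}=|A[r]|$, so $\iota(H)=A[r]$.

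The main obstacle is the splitting step: producing the cover $\widetilde W\simeq F'\times A$ from the $\Aut^0(W)$-action, and verifying the crepancy hypothesis of Lemma~\ref{crepant-quotient}. I would first try to prove that $\Aut^0(W)$ acts freely with quotient map $p_W$ by transporting the action through the kernel support \eqref{supportbase}.
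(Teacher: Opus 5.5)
The overall shape of your outline matches the paper's proof: anticanonical contraction onto $Z=N/H$, an \'etale product cover over the Albanese, Lemma~\ref{crepant-quotient} to identify the fibre with $N$, reduction of the deck group to $H$, and an order count. However, the step you flag as the main obstacle is a genuine gap, and the facts you propose to use for it are wrong or unsupported.
\begin{itemize}
\item $\Aut^0(M)\simeq J$, not $J/H$. Tensoring by $0\neq\eta\in J[r]$ moves a general stable bundle. The quotient $J/J[r]$ is the Albanese of $M$, i.e.\ the image of the orbit map, and Lemma~\ref{quotient-albanese} says nothing about $\Aut^0$.
\item Rouquier's isomorphism $\Aut^0(M)\times\Pic^0(M)\simeq\Aut^0(W)\times\Pic^0(W)$ does not carry $\Aut^0(M)$ into $\Aut^0(W)$. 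It mixes the factors in general; already conjugation by $M_{\delta^*\Theta}$ turns translations into translations composed with $\Pic^0$-twists. So ``the image of $\Aut^0(M)$ preserves the fibres of $p_W$'' gives you nothing.
\item Rouquier only yields $\dim\Aut^0(W)+\dim\Pic^0(W)=2g$, and Hochschild homology only gives sums of Hodge numbers. Getting $\dim\Pic^0(W)=g$ needs Popa--Schnell.
\item The fibre formula of Proposition~\ref{quotient-anticanonical} describes $M$, not $W$. Nothing in your argument shows that the fibres of $p_W$ are abelian varieties. Also, the action is not free over $Z\setminus Z_{\mathrm{reg}}$, where stabilizers $\iota(H_b)$ occur.
\end{itemize}
Consequently, three things are asserted rather than proved: the generic finiteness of $(p_W,\delta_W)$, the splitting $\widetilde W\simeq F'\times A$, and the factorization of $p_W$ on the cover through the $F'$-projection. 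You also assume the deck group acts diagonally. The paper has to show this: the component $f_h(x)\in\Aut_Z(N)=H$ is independent of $x$.

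The paper obtains the splitting from Ambro's theorem. For general smooth $D\in|-mK_W|$, the pair $(W,D/m)$ is klt with $K_W+D/m\sim_{\mathbb Q}0$. Hence the Albanese map is surjective with connected fibres and becomes $F\times A'$ after base change along an isogeny $A'\to\Alb(W)$. The polarization identity $\omega_W^{-m}\simeq p_W^*\cO_Z(-mK_Z)$, together with the fact that all sections of $\operatorname{pr}_F^*\omega_F^{-m}$ come from $F$, then forces $p_W\pi'=q\circ\operatorname{pr}_F$. This gives both the factorization and $K_F\sim_{\mathbb Q}q^*K_Z$.

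Your group-action idea can be repaired, but it needs different inputs:
\begin{itemize}
\item By Rouquier and Popa--Schnell, $\Aut^0(W)$ is a $g$-dimensional abelian variety.
\item It acts trivially on $Z$, because $\Aut(Z)$ is a linear algebraic group ($-K_Z$ is ample).
\item By the Nishi--Matsumura theorem, its orbit homomorphism to $\Alb(W)$ has finite kernel $K$, hence is an isogeny.
\item Then $W\simeq(F\times\Aut^0(W))/K$ with $F$ an Albanese fibre and $K$ acting diagonally, which also settles the constancy issue.
\end{itemize}

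With either input the rest of your plan goes through, with three adjustments:
\begin{itemize}
\item Quotient the abelian factor by $\ker(G\to H)$, as the paper does, rather than claiming $G\to H$ is injective.
\item The last step needs only $\dim A=g$, $rH=0$ and $|H|=r^{2g}=|A[r]|$; the character-group discussion is unnecessary.
\item If you prove surjectivity onto $H$ by comparing $N/H_0$ with $N/H$, you must still compare the universal covers of their regular loci via Lemma~\ref{small}. The paper instead uses connectedness of the fibres of $p_W$.
\end{itemize}
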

\begin{proof}
Let $a:N\to Z=N/H$. Lemmas~\ref{classification-terminal} and
\ref{small} make $Z$ terminal and $\mathbb Q$-factorial and $a$
quasi-\'etale. Thus $K_N=a^*K_Z$ for compatible canonical
divisors, and $-K_Z$ is ample. Lemmas
\ref{canonical-global-generation} and \ref{antican} identify
the anticanonical contraction of $W$ with a morphism
$p_W:W\to Z$ having connected fibres.

We need the polarization as well as the anticanonical base.
Choose $m>1$ sufficiently divisible that $-mK_Z$ is very ample
Cartier, $\omega_W^{-m}$ is generated, and the corresponding
Veronese section rings are generated in degree one. The canonical
linearization on $\omega_N^{-m}$ is the pullback of that on
$\cO_Z(-mK_Z)$ over $Z_{\mathrm{reg}}$; normality extends this
identification across codimension two. Consequently
\[
 \bigoplus_{k\geq0}H^0(W,\omega_W^{-mk})
 \simeq\bigoplus_{k\geq0}H^0(N,\omega_N^{-mk})^H
 \simeq\bigoplus_{k\geq0}H^0(Z,\cO_Z(-mkK_Z)).
\]
The degree-one polarizations on Proj, pulled back to $W$, give
\begin{equation}\label{classification-canonical-polarization}
 \omega_W^{-m}\simeq p_W^*\cO_Z(-mK_Z).
\end{equation}

For a general smooth $D\in|-mK_W|$, the projective pair
$(W,D/m)$ is klt and $K_W+D/m\sim_{\mathbb Q}0$.
Ambro's theorem \cite[Theorem~0.1(2)]{Ambro} therefore makes the
Albanese morphism surjective with connected fibres and gives a
connected finite \'etale cover $\lambda:A'\to\Alb(W)$ and an
isomorphism
\[
 F\times A'\simeq W\times_{\Alb(W)}A'\qquad\text{over }A'.
\]
Here $F$ is projective and connected by the fibre assertion, and
smooth because the product is \'etale over the smooth variety
$W$. Choosing an origin over $0$ makes $A'$ an abelian variety
and $\lambda$ an isogeny. Denote the induced \'etale map by
$\pi':F\times A'\to W$. Derived invariance of irregularity
\cite[Corollary~B]{PopaSchnell} and Lemma~\ref{quotient-albanese}
give $\dim A'=g$.

Pulling back \eqref{classification-canonical-polarization} yields
\[
 (p_W\pi')^*\cO_Z(-mK_Z)
 \simeq\omega_{F\times A'}^{-m}
 \simeq\operatorname{pr}_F^*\omega_F^{-m}.
\]
Since $H^0(A',\cO_{A'})=\C$, every section of the right-hand
side comes from $F$. The morphism $p_W\pi'$, composed with the
embedding defined by $|-mK_Z|$, therefore factors through $F$.
Thus $p_W\pi'=q\operatorname{pr}_F$, where
$q(x)=p_W(\pi'(x,0))$ is surjective. Derived invariance of
dimension \cite[Proposition~4.1]{Huybrechts} gives
\[
 \dim F=\dim W-g=\dim M-g=\dim N=\dim Z,
\]
so $q$ is generically finite. Restriction to $F\times\{0\}$
gives $K_F\sim_{\mathbb Q}q^*K_Z$. By
Lemma~\ref{crepant-quotient}, we may identify $F=N$ so that
$q=a$ and $p_W\pi'=a\operatorname{pr}_N$.

Let $G=\ker\lambda$. The deck action on the Albanese pullback
has the form
\[
 \gamma_h(b,x)=(f_h(x)(b),x+h),\qquad f_h(x)\in\Aut(N).
\]
Since $\pi'\gamma_h=\pi'$ and $p_W\pi'=a\operatorname{pr}_N$,
each $f_h(x)$ lies in $\Aut_Z(N)=H$. To see that it is
independent of $x$ without imposing a condition on $\Aut(C)$,
choose $b_0\in a^{-1}(Z_{\mathrm{reg}})$. The morphism
$x\mapsto f_h(x)(b_0)$ from connected $A'$ to the finite fibre
$a^{-1}(a(b_0))$ is constant; freeness at $b_0$ then determines
$f_h(x)$. Write its constant value as $\rho(h)\in H$.
The action identities make $\rho:G\to H$ a homomorphism.

Put $G_0=\ker\rho$, $H_0=\rho(G)$, and $A=A'/G_0$.
Quotienting first by $G_0$ gives
\[
 W\simeq(N\times A)/H_0,\qquad
 \eta\cdot(b,x)=(\eta\cdot b,x+\iota(\eta)),
 \qquad \iota(\rho(h))=h+G_0.
\]
The last formula defines an embedding $H_0\hookrightarrow A$.
This presentation is over $Z$, with $p_W$ induced by
$a\operatorname{pr}_N$. For $z\in Z_{\mathrm{reg}}$,
\[
 p_W^{-1}(z)\simeq(a^{-1}(z)\times A)/H_0.
\]
The free transitive $H$-set $a^{-1}(z)$ has $[H:H_0]$
$H_0$-orbits. Each orbit contributes one disjoint component
isomorphic to $A$, since its $H_0$-action is free and transitive.
Connectedness of the fibres of $p_W$ therefore forces $H_0=H$.
Finally, $\dim A=g$ and $rH=0$, so $\iota(H)\subset A[r]$;
both groups have order $r^{2g}$. Hence $\iota$ identifies $H$
with all of $A[r]$.
\end{proof}

\begin{remark}\label{geometric-scope}
The geometric proof also applies to $g=2$ and $r\geq3$.
For $(g,r)=(2,2)$ the nontrivial tangent actions have age one,
so the argument excluding a crepant resolution does not apply.
Reconstruction for two given product quotients and the lifting
theorem below require only the codimension-two bound and apply
also in this case.
\end{remark}

\section{Equivalences of product quotients}\label{seclifting}\label{secconventions}
We first lift equivalences to the finite product covers and separate
their Fano and abelian factors. The final subsection determines when
an abelian equivalence kernel admits the linearization needed for
descent.

We retain the Hom, duality, and isometry conventions of
Section~\ref{secmodulisetup}. Write
$\Aut_{\mathrm{gp}}(A)=\End(A)^\times$ for automorphisms fixing
the origin; $\Aut(A)$ also includes translations.

Let $\mathcal P_A$ be the Poincar\'e line bundle on
$A\times\widehat A$, normalized to be trivial along both zero
sections, and put $P_\xi=\mathcal P_A|_{A\times\{\xi\}}$.
For $x\in A$, write $t_x(z)=z+x$ and $T_x=(t_x)_*$.
For $\xi\in\widehat A$, abbreviate $M_{P_\xi}$ to $M_\xi$.
For any line bundle $R$ on $A$, define
\[
 \phi_R:A\longrightarrow\widehat A,\qquad
 x\longmapsto[t_x^*R\otimes R^{-1}].
\]
We omit the subscript on $\mathcal P_A$ when its abelian variety
is clear. On $A\times A$, this notation requires a chosen principal
polarization identifying $A$ with $\widehat A$.

Recall the definition of an Orlov isometry in
\eqref{orlovisometries}. For an isometry from
$A_1\times\widehat A_1$ to $A_2\times\widehat A_2$, the blocks have
types $a:A_1\to A_2$, $b:\widehat A_1\to A_2$,
$c:A_1\to\widehat A_2$, and $e:\widehat A_1\to\widehat A_2$.

For a finite group $G$ acting on the left on $V$, a
\emph{$G$-linearization} of $K\in\Db(V)$ consists of isomorphisms
$\theta_g:g^*K\xrightarrow{\sim}K$ satisfying
\[
 \theta_e=\id_K,\qquad
 \theta_{gh}=\theta_h\circ h^*\theta_g,
 \qquad (gh)^*=h^*g^*.
\]
Here $e$ is the identity; the displayed composite has source
$h^*g^*K$. These are the inverses of the linearization maps in
\cite[Section~2.1]{KS}. For an abelian group $H$, written additively,
we equivalently use
$\theta_0=\id$ and
$\theta_{h+h'}=\theta_h\circ h^*\theta_{h'}$.
An \emph{invariant} object only satisfies $g^*K\simeq K$ for each
$g$; it need not admit isomorphisms obeying this cocycle condition.
Write $\Db_G(V)$ for the bounded derived category of equivariant
coherent sheaves, equivalently the category of linearized derived
objects in characteristic zero \cite[Section~2.1]{KS}.
For a subgroup $S\subset G$, let
\[
 \operatorname{Ind}_S^G:\Db_S(V)\longrightarrow\Db_G(V)
\]
denote induction, which is left adjoint
to restriction of linearizations from $G$ to $S$
\cite[Section~2.1]{KS}.
For representatives $T$ of the left cosets $G/S$, its underlying object is
\[
 \operatorname{Ind}_S^G(K)
 \simeq
 \bigoplus_{t\in T}(t^{-1})^*K
 \qquad\text{in }\Db(V),
\]
with the linearization induced by that of $K$ and the permutation
of cosets.

For a free $G$-action, finite \'etale descent gives an equivalence
\[
 \pi^*:\Db(V/G)\xrightarrow{\sim}\Db_G(V),\qquad
 \pi:V\longrightarrow V/G,
\]
with quasi-inverse $\Kcal\mapsto(\pi_*\Kcal)^G$
\cite[Section~4]{KS}; invariants are exact over $\C$.

\subsection{Equivariant lifting between product quotients}
Bridgeland--Maciocia treat canonical covers \cite[Theorem~4.5]{BM},
while Lombardi--Popa lift equivalences to cyclic covers associated with
torsion line bundles matched by the Rouquier isomorphism
\cite[Theorem~10]{LP}. Krug--Sosna give lift and descent criteria for
finite covers under kernel-linearization hypotheses
\cite[Propositions~4.2--4.3]{KS}. Here the quotient geometry produces,
for every equivalence, a lift with a kernel linearized under the graph
of a deck-group automorphism.

Let $B$ be a smooth connected projective Fano variety,
and let $H$ be a finite abelian group whose action
on $B$ satisfies \eqref{smallhyp}. 
For $i=1,2$, let $A_i$ be an abelian variety and fix
an injective homomorphism
$\iota_i:H\hookrightarrow A_i(\C)$.
Set
\[
 V_i:=B\times A_i,\qquad W_i:=V_i/H,
 \qquad
 h\cdot(b,x):=(h\cdot b,x+\iota_i(h)),
\]
and denote the finite \'etale quotient maps by
$\pi_i:V_i\to W_i$.

For $\nu\in\Aut(H)$, write
\( S_\nu:=\{(h,\nu(h)):h\in H\}\subset H\times H.\)
The action of $S_\nu$ on $V_1\times V_2$ is the
restriction of the product action of $H\times H$.

\begin{theorem}[Equivariant lifting]
\label{liftmain}\label{classification-two-cover-lift}
Every equivalence
$\Phi:\Db(W_1)\xrightarrow{\sim}\Db(W_2)$
admits an equivalence
$\widetilde\Phi:\Db(V_1)\xrightarrow{\sim}\Db(V_2)$
with an $S_\nu$-linearized Fourier--Mukai kernel
for some $\nu\in\Aut(H)$, satisfying
\begin{equation}\label{liftrelations}
 \widetilde\Phi\circ\pi_1^*
 \simeq \pi_2^*\circ\Phi,
 \qquad
 \pi_{2*}\circ\widetilde\Phi
 \simeq \Phi\circ\pi_{1*}.
\end{equation}
\end{theorem}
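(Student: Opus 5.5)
Let $P\in\Db(W_1\times W_2)$ be the kernel of $\Phi$. The plan is to pull $P$ back along the finite \'etale cover $\pi_1\times\pi_2:V_1\times V_2\to W_1\times W_2$, which has group $H\times H$, and then split off one component of the pulled-back kernel.

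Set $Q=(\pi_1\times\pi_2)^*P$. It carries a canonical $H\times H$-linearization. Standard base change identifies the functor $\Phi_Q$ with $\pi_2^*\Phi\pi_{1*}$. That functor is not an equivalence: $\pi_{1*}\pi_1^*$ is tensoring with $\bigoplus_{\alpha\in T_1}\alpha$, by \eqref{characters}. Concretely, $\pi_2^*\Phi\pi_{1*}\pi_1^*\simeq\bigoplus_{\alpha}\pi_2^*\Phi(-\otimes\alpha)$. So I want to decompose $Q$ as a direct sum of $|H|$ summands indexed by cosets.

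The key input is Lemma~\ref{antican}, applied to $\Phi$. Together with Proposition~\ref{quotient-anticanonical}, it gives an isomorphism $\sigma:Z_1=B/H\to Z_2=B/H$ with $\Supp P$ lying over the graph of $\sigma$. By Lemma~\ref{small}, $\sigma$ lifts to some $f\in\Aut(B)$. This lift is unique up to $H$, and conjugation by it defines $\nu\in\Aut(H)$ with $f(hb)=\nu(h)f(b)$.

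The support of $Q$ then lies in
\[
\{(b_1,x_1,b_2,x_2): a(b_2)=a(f(b_1))\},
\]
and over the free locus this is the disjoint union over $k\in H$ of the closed sets $\Gamma_k=\{b_2=k\,f(b_1)\}$. Over the non-free locus these pieces meet. This is the main obstacle: I need to show that $Q$ still splits as $\bigoplus_k Q_k$ with $\Supp Q_k\subset\overline{\Gamma_k}\times A_1\times A_2$.

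My first attempt uses idempotents in $\End(Q)$ rather than support separation. Let $R=\bigoplus_{\alpha\in T_1}\alpha$. Then
\[
\End(\Phi_Q)\supset\End(\pi_{1*}\pi_1^*)\supset\C[T_1],
\]
and $\C[T_1]$ acts on $Q$ through the character decomposition. The group $H^\vee=T_1$ acts on $Q$ via the $H\times\{0\}$-linearization twisted appropriately. More directly, the pushforward $(\mathrm{id}\times\pi_2)_*Q$ decomposes into eigensheaves of the $H$-action on $V_1$; these eigensheaves are isotypic components, hence genuine direct summands.

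Pulling back along $\mathrm{id}\times\pi_2$ gives a decomposition of $Q$ into $|H|$ summands $Q_\chi$, and the linearization of $Q$ permutes them freely. I will then choose one summand $\widetilde K$, namely the one whose support generically contains $\Gamma_0$ (the graph of $f$), to normalize $\nu$.

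To see that $\Phi_{\widetilde K}$ is an equivalence, I will use the fact that $\Phi_Q\simeq\pi_2^*\Phi\pi_{1*}$ is faithful with adjoints computed explicitly. The decomposition gives
\[
\pi_{2*}\Phi_Q\simeq\bigoplus_k\pi_{2*}\Phi_{Q_k},
\]
and each summand is identified with $\Phi\pi_{1*}$ by the permutation symmetry. Finally I will check $\Phi_{\widetilde K}^R\Phi_{\widetilde K}\simeq\mathrm{id}$ by comparing with the corresponding computation for $\Phi^{-1}$ on $Q$, via the Bondal--Orlov style criterion of checking on skyscrapers $k(v)$, $v\in V_1$. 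For this, $\pi_{1*}k(v)$ splits into the points of the orbit, and $\Phi_{\widetilde K}$ selects one of them.

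The stabilizer of the summand $\widetilde K$ under $H\times H$ is exactly $S_\nu$, since $(h,h')$ preserves $\Gamma_0$ if and only if $h'=\nu(h)$. The $H\times H$-linearization of $Q=\operatorname{Ind}_{S_\nu}^{H\times H}\widetilde K$ then restricts to an $S_\nu$-linearization of $\widetilde K$. This identification also yields the relations \eqref{liftrelations}: pushing forward along $\pi_2$ sums the translates, and pulling back along $\pi_1$ reproduces $Q$ restricted to one component.
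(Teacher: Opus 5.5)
\paragraph{Gap: the splitting of the pulled-back kernel.} You correctly isolate the crux: $Q=(\pi_1\times\pi_2)^*P$ must split into $|H|$ summands, one generically supported on each $\Gamma_k$, even though the $\Gamma_k$ meet over the non-free locus. The mechanism you propose does not produce such a splitting.

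The only part of $H\times H$ acting on $(\id\times\pi_2)_*Q$ over the identity of $V_1\times W_2$ is $\{0\}\times H$; the action on $V_1$ moves the base. Its isotypic pieces are $(\pi_1\times\id)^*P\otimes\operatorname{pr}_{W_2}^*\beta$ for $\beta\in T_2$. Since $\pi_2^*\beta\simeq\cO_{V_2}$, each piece pulls back along $\id\times\pi_2$ to all of $Q$. What you have decomposed is $(\id\times\pi_2)^*(\id\times\pi_2)_*Q\simeq\bigoplus_{h\in H}(\id\times h)^*Q$, not $Q$.

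Likewise, $\C[T_1]$ does not act on $Q$. The projections onto the summands of $\pi_{1*}\pi_1^*\simeq\bigoplus_\alpha M_\alpha$ are endomorphisms of $\Phi_Q\circ\pi_1^*$, not of $\Phi_Q$, and the kernel of $\pi_{1*}$ has endomorphism algebra $\C$. The $(\alpha,\beta)$-part of $\End(Q)$ is nonzero only if $\Phi^{-1}M_\beta\Phi M_\alpha\simeq\id$. You know neither that such a $\beta$ exists for every $\alpha$, nor that the resulting (a priori twisted) group algebra is commutative. Without a genuine splitting, ``the summand whose support generically contains $\Gamma_0$'' is undefined. The stabilizer computation, the induction identity, and the equivalence check then have nothing to act on.

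\paragraph{The missing idea.} The paper transports the splitting from the free locus by computing $\End(Q)$.
\begin{enumerate}
\item Adjunction and the character decompositions give $\End(Q)\simeq\bigoplus_{\alpha\in T_1,\beta\in T_2}\Hom(P,P\otimes q_1^*\alpha\otimes q_2^*\beta)$.
\item Convolving with an inverse kernel and applying Rouquier's theorem turns each summand into $\Hom(\cO_{\Delta_{W_1}},\Gamma_{t*}R)$. This is $0$ or $\C$, with at most one $\beta$ for each $\alpha$, so $\dim\End(Q)\le|H|$.
\item Over $V_1^\circ\times V_2^\circ$ the supports do separate, giving $|H|$ nonzero orthogonal projections.
\item Restriction $\End(Q)\to\End(Q^\circ)$ is an isomorphism summand by summand. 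Automorphisms of $W_1$ preserve $U_1$, and $H^0(W_1,R)\to H^0(U_1,R|_{U_1})$ is bijective for a line bundle $R$ on the normal variety $W_1$, because $W_1\setminus U_1$ has codimension at least two.
\end{enumerate}
Hence $\End(Q)\simeq\C^{|H|}$ as algebras. The projection onto the $\Gamma_0$-piece lifts to an $S_\nu$-invariant primitive idempotent. It splits in the idempotent-complete category $\Db_{S_\nu}(V_1\times V_2)$, yielding a simple $S_\nu$-linearized $\widetilde P$ with $\operatorname{Ind}_{S_\nu}^{H\times H}\widetilde P\simeq Q$.

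That simplicity is also what the paper uses to show $\Phi_{\widetilde P}$ is an equivalence, via an equivariant adjoint kernel and induction. Your skyscraper check would need a substitute for it.
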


\begin{proof}
Let $P\in\Db(W_1\times W_2)$ be a Fourier--Mukai
kernel of $\Phi$, and let
$P'\in\Db(W_2\times W_1)$ be an inverse kernel.
Set
\( \Pi:=\pi_1\times\pi_2, 
 Q:=\Pi^*P,  G:=H\times H.\)
The pullback $Q$ carries its canonical $G$-linearization.
We first prove that
\begin{equation}\label{mainendo}
 \End(Q)\simeq\C^{|H|}
\end{equation}
as $\C$-algebras.

\medskip
\noindent
\emph{Restriction to the smooth quotient locus.}
Set $Z:=B/H$, and let $B^\circ\subset B$ be the locus
with trivial stabilizer.
By Lemma~\ref{small},
$B^\circ=a^{-1}(Z_{\mathrm{reg}})$, where
$a:B\to Z$ is the quotient map.
Let $p_i:W_i\to Z$ be the anticanonical contractions,
and set
\[
 U_i:=p_i^{-1}(Z_{\mathrm{reg}}),\qquad
 V_i^\circ:=\pi_i^{-1}(U_i)=B^\circ\times A_i.
\]
The complements of these open subsets have codimension
at least two.

By Lemma~\ref{antican}, there is an automorphism
$\sigma:Z\xrightarrow{\sim}Z$ such that
\( p_2(w_2)=\sigma(p_1(w_1))\) for every $(w_1,w_2)\in\Supp(P).$
Since $\sigma$ preserves $Z_{\mathrm{reg}}$, this gives
\[
 \Supp(P)\cap(U_1\times W_2)
 =
 \Supp(P)\cap(W_1\times U_2).
\]
Applying the same lemma to $\Phi^{-1}$ gives the
analogous equality for $P'$.

Write
\[
 P^\circ:=P|_{U_1\times U_2},\qquad
 (P')^\circ:=P'|_{U_2\times U_1},\qquad
 Q^\circ:=Q|_{V_1^\circ\times V_2^\circ}.
\]
These support equalities make both restricted kernels proper over
their source and target. They also force the intermediate coordinate
in either convolution to lie in the corresponding open subset.
Open base change \cite[Tag~08IB]{Stacks} therefore gives
\[
 \begin{aligned}
 (P')^\circ\star P^\circ
 &\simeq
 (P'\star P)|_{U_1\times U_1}
 \simeq\cO_{\Delta_{U_1}},\\
 P^\circ\star(P')^\circ
 &\simeq
 (P\star P')|_{U_2\times U_2}
 \simeq\cO_{\Delta_{U_2}}.
 \end{aligned}
\]
Thus $P^\circ$ and $(P')^\circ$ induce inverse equivalences.

\medskip
\noindent
\emph{Endomorphisms of the pulled-back kernel.}
For $i=1,2$, let $T_i\subset\Pic^0(W_i)$ be the
subgroup of character line bundles of $\pi_i$.
By \eqref{characters},
\[
 \pi_{i*}\cO_{V_i}
 \simeq\bigoplus_{\alpha\in T_i}\alpha,
 \qquad |T_i|=|H|.
\]
Let $q_i:W_1\times W_2\to W_i$ be the projections.
Since $\Pi$ is finite \'etale, adjunction, the
projection formula \cite[Tag~08EU]{Stacks}, and
the character decompositions give
\begin{equation}\label{lift-hom-decomposition}
 \begin{aligned}
 \End(Q)
 &\simeq\Hom(P,\Pi_*\Pi^*P)\\
 &\simeq
 \Hom\bigl(P,P\otimes\Pi_*\cO_{V_1\times V_2}\bigr)\\
 &\simeq
 \bigoplus_{\substack{\alpha\in T_1\\ \beta\in T_2}}
 \Hom\bigl(P,P\otimes q_1^*\alpha\otimes q_2^*\beta\bigr).
 \end{aligned}
\end{equation}
These are isomorphisms of $\C$-vector spaces.

Fix $\alpha\in T_1$ and $\beta\in T_2$, and set
\( K:=P'\star
 \bigl(P\otimes q_1^*\alpha\otimes q_2^*\beta\bigr).\)
Convolution with $P'$ identifies the corresponding
summand in \eqref{lift-hom-decomposition} with
\( \Hom(\cO_{\Delta_{W_1}},K),\)
and $K$ represents the autoequivalence
$\Phi^{-1}\circ M_\beta\circ\Phi\circ M_\alpha$.
By Rouquier's theorem
\cite[Theorem~4.18]{Rouquier}, there are
$t\in\Aut^0(W_1)$ and $R\in\Pic^0(W_1)$ such that
\( K\simeq\Gamma_{t*}R,\) where \( \Gamma_t:W_1\hookrightarrow W_1\times W_1, 
 x\longmapsto(x,t(x)).\)
Here $t$ and $R$ depend on $\alpha$ and $\beta$,
and no cohomological shift occurs.

For $t\ne\id$, a morphism from $\cO_{\Delta_{W_1}}$ to
$\Gamma_{t*}R$ has image supported on the proper intersection of
the two graphs. A line bundle on the integral graph has no such
subsheaf, so this Hom space vanishes. For $t=\id$, it is
$H^0(W_1,R)$, which is $\C$ if $R\simeq\cO_{W_1}$ and zero
otherwise, since $R\in\Pic^0(W_1)$.

It follows that every nonzero summand in
\eqref{lift-hom-decomposition} is one-dimensional
and satisfies
\[
 \Phi^{-1}\circ M_\beta\circ\Phi\circ M_\alpha
 \simeq\id_{\Db(W_1)}.
\]
For each $\alpha$ there is at most one such $\beta$, since
$M_\beta\simeq\Phi M_{\alpha^{-1}}\Phi^{-1}$ determines $\beta$
by evaluation on $\cO_{W_2}$. Hence
$\dim_\C\End(Q)\leq|H|$.

We next compare $\End(Q)$ with $\End(Q^\circ)$.
Every automorphism of $W_1$ induces an automorphism
of its anticanonical model and therefore preserves
$U_1$.
The adjunction and convolution identifications above
commute with restriction to the chosen open subsets.
Hence restriction on each summand of
\eqref{lift-hom-decomposition} corresponds to
\[
 \Hom(\cO_{\Delta_{W_1}},\Gamma_{t*}R)
 \longrightarrow
 \Hom\bigl(
   \cO_{\Delta_{U_1}},
   \Gamma_{t|_{U_1}*}(R|_{U_1})
 \bigr).
\]
If $t\ne\id_{W_1}$, then
$t|_{U_1}\ne\id_{U_1}$ by density, and both Hom
spaces vanish by the graph argument.
If $t=\id_{W_1}$, this is the restriction map
\( H^0(W_1,R)\longrightarrow H^0(U_1,R|_{U_1}).\)
For the inclusion $j:U_1\hookrightarrow W_1$,
normality, the codimension bound, and the local
freeness of $R$ give
\(R\xrightarrow{\sim}j_*(R|_{U_1}) 
  \text{\cite[Tag~0EBJ]{Stacks}}.\)
Thus restriction is an isomorphism in this case
as well.

Summing these isomorphisms gives
\[
 \operatorname{res}:\End(Q)\xrightarrow{\sim}\End(Q^\circ)
\]
as a $G$-equivariant algebra isomorphism: restriction preserves
composition and commutes with the pullbacks and linearization of
$Q$. Thus the required extension of morphisms follows from the
line-bundle calculation after convolution.

Lift $\sigma$ to an automorphism $f$ of $B$ by
Lemma~\ref{small}, and define
$\nu\in\Aut(H)$ by $\nu(h)=fhf^{-1}$.
The support of $Q^\circ$ is contained in the
pairwise disjoint closed subsets
\[
 D_h:=
 \bigl\{
 ((b,x),(b',y))\in V_1^\circ\times V_2^\circ:
 b'=h\cdot f(b)
 \bigr\},
 \qquad h\in H.
\]
Accordingly,
\[
 Q^\circ\simeq\bigoplus_{h\in H}Q_h^\circ,
 \qquad \Supp(Q_h^\circ)\subset D_h.
\]
The action on these subsets is
\( (h_1,h_2)\cdot D_h
 =D_{h+h_2-\nu(h_1)}.\)
Thus the canonical $G$-linearization permutes the
summands transitively.
Since $P^\circ$ induces an equivalence,
$Q^\circ\ne0$, so every $Q_h^\circ$ is nonzero.

The $|H|$ nonzero orthogonal projections onto these summands are
linearly independent. The dimension bound and restriction
isomorphism make them a basis of $\End(Q^\circ)$, with the
multiplication of $\C^{|H|}$. This proves \eqref{mainendo}
as an algebra statement.

\medskip
\noindent
\emph{An equivariant summand.}
The stabilizer of $D_0$ is
\( S:=\{(h,\nu(h)):h\in H\}\subset G.\)
Let $e_0^\circ$ be the projection onto $Q_0^\circ$,
and let $e_0\in\End(Q)$ be its preimage under
$\operatorname{res}$.
Since $\operatorname{res}$ is a $G$-equivariant
algebra isomorphism, $e_0$ is an $S$-invariant
primitive idempotent.

Restrict the $G$-linearization of $Q$ to $S$.
Then $e_0$ is an endomorphism of $Q$ in
$\Db_S(V_1\times V_2)$.
This category is idempotent complete, being the
bounded derived category of the abelian category
of $S$-equivariant coherent sheaves.
Splitting $e_0$ gives an $S$-equivariant decomposition
\[
 Q\simeq\widetilde P\oplus Q',
 \qquad e_0=\id_{\widetilde P}\oplus0.
\]
Moreover,
\[
 \End_{\Db(V_1\times V_2)}(\widetilde P)
 \simeq e_0\End(Q)e_0
 \simeq\C,
\]
so the underlying complex $\widetilde P$ is simple.

The translates of $e_0$, indexed by $G/S$, restrict to the
projections onto the $Q_h^\circ$. They are therefore orthogonal
and sum to $\id_Q$. Induction--restriction adjunction
\cite[Lemma~2.4]{KS} sends the $S$-equivariant inclusion
$\widetilde P\to Q$ to a $G$-equivariant isomorphism
\[
 \operatorname{Ind}_S^G\widetilde P\xrightarrow{\sim}Q,
\]
whose components are the corresponding summand inclusions.

\medskip
\noindent
\emph{Descent and the lifted equivalence.}
Set $\widetilde\Phi:=\Phi_{\widetilde P}$.
The $S$-linearization of $\widetilde P$ defines
an equivariant Fourier--Mukai functor
\[
 \Psi:\Db_H(V_1)\longrightarrow\Db_H(V_2)
\]
whose underlying functor is $\widetilde\Phi$
\cite[Section~3.2]{KS}.
We first identify $\Psi$ under finite \'etale descent.

Let $\widetilde q_i:V_1\times V_2\to V_i$ be the
projections, and put $H_1:=H\times\{0\}$.
For $E\in\Db(W_1)$, set
\[
 \mathcal T_E:=
 R\widetilde q_{2*}
 \bigl(Q\otimes\widetilde q_1^*\pi_1^*E\bigr),
 \qquad
 \mathcal F_E:=\widetilde\Phi(\pi_1^*E).
\]
Here $\pi_1^*E$ carries its canonical $H$-linearization.
Together with the $G$-linearization of $Q$, this
induces an $H_1$-action on $\mathcal T_E$, since
$H_1$ acts trivially on $V_2$.
Finite \'etale descent in the first factor and
flat base change in the second give a natural
isomorphism
\( \pi_2^*\Phi(E)\simeq\mathcal T_E^{H_1}.\)
The $H$-linearization on the right is induced by
the second factor $\{0\}\times H$.

The induced decomposition of $Q$ decomposes $\mathcal T_E$ into
summands indexed by $G/S$, with $\mathcal F_E$ at $S$. Since
$H_1\cap S=\{0\}$ and $H_1S=G$, the group $H_1$ permutes them
freely and transitively. If $\jmath_E$ and $p_E$ are the inclusion
and projection of $\mathcal F_E$, then
\[
 \psi_E:=
 p_E|_{\mathcal T_E^{H_1}}:
 \mathcal T_E^{H_1}\xrightarrow{\sim}\mathcal F_E
\]
is an isomorphism, with inverse induced by
\[
 \sum_{h\in H_1}\rho_h\circ\jmath_E:
 \mathcal F_E\longrightarrow\mathcal T_E,
\]
where $\rho_h$ is the action on $\mathcal T_E$. Both maps are
natural in $E$.

This isomorphism also preserves the target $H$-linearizations.
Indeed, for $k\in H$ and $h=\nu^{-1}(k)$, the identity
$(0,k)=(-h,0)+(h,k)$ identifies the action of $(0,k)$ on
$\mathcal T_E^{H_1}$ with that of $(h,k)\in S$. The latter
preserves $\mathcal F_E$ and acts by the linearizations of
$\widetilde P$ at $(h,k)$ and of $\pi_1^*E$ at $h$, exactly the
linearization defining $\Psi(\pi_1^*E)$.

Thus $\Psi\pi_1^*\simeq\pi_2^*\Phi$ in $\Db_H(V_2)$.
Finite \'etale descent makes $\Psi$ an equivalence; forgetting
linearizations gives the first identity in \eqref{liftrelations}.

It remains to prove that $\widetilde\Phi$ itself
is an equivalence.
Since $\widetilde P$ is simple,
\cite[Lemma~3.11]{KS} equips its right adjoint
kernel with a linearization for which the
adjunction unit and counit are equivariant.
These induce the unit and counit of the
equivariant adjunction for $\Psi$, and hence
are isomorphisms on all equivariant objects.

For any $K\in\Db(V_1)$, apply the equivariant
unit to $\operatorname{Ind}_{\{0\}}^H K$ and
forget equivariance.
The underlying object is
$\bigoplus_{h\in H}h^*K$, so the ordinary unit
is an isomorphism on this direct sum.
By naturality and additivity, its component
on the summand $K$ is the unit at $K$ and is
therefore an isomorphism.
Applying the same argument to the counit and
$\operatorname{Ind}_{\{0\}}^H L$, for
$L\in\Db(V_2)$, shows that the ordinary counit
is an isomorphism at every $L$.
Thus $\widetilde\Phi$ is an equivalence.

Finally, taking right adjoints of
$\widetilde\Phi\circ\pi_1^*
 \simeq\pi_2^*\circ\Phi$
gives
\( \pi_{1*}\circ\widetilde\Phi^{-1}
 \simeq\Phi^{-1}\circ\pi_{2*}.\)
Composing with $\Phi$ on the left and
$\widetilde\Phi$ on the right yields
\( \pi_{2*}\circ\widetilde\Phi
 \simeq\Phi\circ\pi_{1*}.\)
This proves both identities in \eqref{liftrelations}.
\end{proof}

\subsection{Equivalences of Fano--abelian products}\label{secproduct}
Sundelius's Theorem~4.1 concerns autoequivalences of $X\times E$,
where $E$ is elliptic and $X$ is smooth projective with
$\Pic^0(X)=0$ and $\Aut^0(X)=1$ \cite{Sundelius}.
Here the abelian factors may have arbitrary dimension and the equivalence
may join different products; the other factors are Fano, with no
discreteness assumption on their automorphism groups. Neither set of
geometric hypotheses contains the other. For the support argument,
compare also \cite[Appendix~A]{Sundelius}.

Let $B_i$ be a smooth connected projective Fano variety
and let $A_i$ be a complex abelian variety, for $i=1,2$.

\begin{theorem}[Product decomposition]\label{prodmain}
Every equivalence
\( \Phi:\Db(B_1\times A_1)
 \xrightarrow{\sim}\Db(B_2\times A_2)\)
admits a decomposition
\begin{equation}\label{abstractproduct}
 \Phi\simeq(f_*\circ M_R)\bo F,
\end{equation}
where $f:B_1\xrightarrow{\sim}B_2$ is an isomorphism,
$R\in\Pic(B_1)$, and
$F:\Db(A_1)\xrightarrow{\sim}\Db(A_2)$ is an equivalence.
\end{theorem}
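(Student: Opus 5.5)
We plan to locate the support of the kernel using the anticanonical morphism, and then split it in the style of Orlov's argument for products. Write $X_i=B_i\times A_i$ and let $P\in\Db(X_1\times X_2)$ be the kernel of $\Phi$. Since $\omega_{X_i}\simeq\operatorname{pr}_{B_i}^*\omega_{B_i}$ and $-K_{B_i}$ is ample, $-K_{X_i}$ is semiample, and its contraction is the projection $X_i\to B_i$ (the anticanonical ring is $R(B_i,-K_{B_i})$ because $H^0(A_i,\cO)=\C$). Lemma~\ref{antican} then yields an isomorphism $\sigma:B_1\xrightarrow{\sim}B_2$ with
\[
 \Supp(P)\subset\{((b,x),(b',y)):b'=\sigma(b)\}.
\]
Replacing $\Phi$ by $(\sigma^{-1})_*\bo\id$ composed with $\Phi$, we may assume $B_1=B_2=B$ and that $P$ is supported over the relative diagonal $\Delta_B\times A_1\times A_2$.

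Next we show that $\Phi$ is $B$-linear, so that $P\simeq\Delta_{B*}$ applied to a relative kernel $\mathcal P\in\Db(B\times A_1\times A_2)$. The support condition says that $\Phi(k(b)\bo E)$ is supported on $\{\sigma(b)\}\times A_2$. The functor $\Phi$ commutes with tensoring by $\omega$-powers (Lemma~\ref{canonical-global-generation} and \cite[Lemma~3.3]{LO}). Since $\omega_B^{-1}$ is ample, this compatibility with sections of $\operatorname{pr}_B^*\omega_B^{-n}$ will give compatibility with the action of $\cO_B$ in the sense of relative Fourier--Mukai transforms. One could instead argue via the standard fact that a kernel set-theoretically supported on the relative diagonal and commuting with $\operatorname{pr}_B^*$ of an ample line bundle and its sections is the pushforward of a kernel from the fibre product.

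Then we treat the fibrewise kernels $\mathcal P_b=\mathcal P|_{\{b\}\times A_1\times A_2}$. Each $\mathcal P_b$ induces an equivalence $\Db(A_1)\to\Db(A_2)$, by the standard criterion for relative equivalences (\cite{Huybrechts}, fibrewise equivalence). The pair $(A_1,A_2)$ is fixed, and by Orlov the set of equivalences up to shift, translations and $\Pic^0$ twists is discrete, indexed by the Orlov isometry $f_b$. Since $B$ is connected, $f_b$ and the shift are constant. Fix $F_0$ with kernel $\mathcal P_{b_0}$. Then $\mathcal P\star(\cO_B\bo F_0^{-1})$ is a family over $B$ of autoequivalences of $A_2$ with trivial isometry and zero shift, that is, of kernels of the form $T_x\circ M_\xi$. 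By Rouquier such a family is a sheaf on $B\times A_2\times A_2$ that is the graph of a morphism $B\to A_2\times\widehat A_2$, twisted by a line bundle.

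Finally, we use that $B$ is Fano, so $\Alb(B)=0$ by Kodaira vanishing, as noted before Lemma~\ref{small}. Every morphism $B\to A_2\times\widehat A_2$ is therefore constant, and the family becomes $\Delta_{B*}R\bo(\text{constant kernel})$ with $R\in\Pic(B)$. Absorbing the constant translation and twist into $F$, and restoring $\sigma=f$, gives \eqref{abstractproduct}. The main obstacle will be the second step, namely upgrading the set-theoretic support statement to a genuine relative (scheme-theoretic, $B$-linear) kernel. The plan is to prove $\Phi$ commutes with $\operatorname{pr}_B^*L\otimes-$ for $L$ ample and compatibly with sections, and then invoke the linearity criterion. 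Alternatively, one could compute $\Phi(\cO_b\bo\cO_{A_1})$ directly and use simplicity together with Lemma~\ref{antican}.
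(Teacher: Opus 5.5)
The genuine gap is your second step. Your first step is fine: the anticanonical contraction of $B_i\times A_i$ is the projection to $B_i$, so Lemma~\ref{antican} puts $\Supp(P)$ over the graph of some $\sigma:B_1\xrightarrow{\sim}B_2$. But the second step carries the whole weight of your argument, and it is only asserted.

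Lemma~\ref{antican} is a \emph{set-theoretic} statement. It does not exclude that $P$ lives on an infinitesimal thickening of $\Delta_B\times A_1\times A_2$. Nor does it exclude a complex whose cohomology sheaves are supported there but which is glued by extension classes in the normal directions.

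Compatibility with $\omega_B^{\pm n}$ and its sections gives you only an identity of \emph{morphisms in $\Db$}: after identifying $P\otimes p_1^*\omega_B^{-n}\simeq P\otimes p_2^*\omega_B^{-n}$, the maps induced by $p_1^*s$ and $p_2^*s$ agree. To conclude $P\simeq(\Delta_B\times\id)_*\mathcal P$ you would need a criterion of the following kind: an object on which local generators of an ideal act by zero in the derived category lies in the essential image of pushforward from the zero locus. No such triangulated criterion is standard. Statements of the type ``$S$-linear functors have kernels on the fibre product'' are obtained at the level of enhancements, not from the data you have.

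Without step 2, the fibre kernels $\mathcal P_b$ bear no established relation to $\Phi$, so your later steps have nothing to act on. Those steps also need real arguments: local constancy of $b\mapsto f_b$, and a representability statement turning a $B$-family of kernels of the functors $T_xM_\xi$ into a morphism $B\to A_2\times\widehat A_2$. These are repairable.

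The paper never builds a relative kernel. It first removes the abelian part of Rouquier's isomorphism. Since $\Aut^0(B_i)$ is affine, Rouquier's isomorphism restricts to $R_\Phi:A_1\times\widehat A_1\xrightarrow{\sim}A_2\times\widehat A_2$. A universal-family and biextension computation shows this is an Orlov isometry, realized by some $F_0$ (Lemma~\ref{abelianRouquier}).

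Then $\Phi_0=(\id\bo F_0^{-1})\Phi$ commutes with every $M_{p_{A_1}^*P_\xi}$. So each $E_z=\Phi_0(k(z))$ is supported in one fibre $\{b\}\times A_1$ by Lemma~\ref{antican}, \emph{and} is invariant under all twists by $p_{A_1}^*P_\xi$. Pull a cohomology sheaf back to a resolution of a support component and take determinants: this shows $\Pic^0(A_1)$ pulls back trivially. Albanese--Picard duality then forces the component to be a point. The point-object criterion makes $\Phi_0$ standard, and Lemma~\ref{geomproduct} splits it.

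This use of $\Pic^0(A_1)$-invariance of the images of points, after normalizing away the abelian part, is the idea missing from your proposal. It converts the set-theoretic fibre support directly into point objects and so bypasses the scheme-theoretic $B$-linearity you could not establish.
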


No polarization or endomorphism-ring assumption is imposed on the
$A_i$. We first recover the Poincar\'e pairing from families of
standard kernels, then remove the abelian equivalence and recognize
point objects. Set $W_i=B_i\times A_i$.

\begin{lemma}
\label{geomproduct}
Every isomorphism $W_1\xrightarrow{\sim}W_2$ has the
form $f\times u$, where
$f:B_1\xrightarrow{\sim}B_2$ and
$u:A_1\xrightarrow{\sim}A_2$ are isomorphisms.
Moreover, for $i=1,2$, pullback along the projections induces
an isomorphism
\[
 \Pic(B_i)\oplus\Pic(A_i)
 \xrightarrow{\sim}\Pic(W_i),
 \qquad
 (L,M)\longmapsto L\bo M.
\]
\end{lemma}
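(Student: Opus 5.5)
To prove Lemma~\ref{geomproduct}, I will first establish the Picard decomposition and then deduce the statement about isomorphisms from it.

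\emph{Picard decomposition.} Since $B_i$ is Fano, Kodaira vanishing gives $H^1(B_i,\cO_{B_i})=0$. Hence $\Pic^0(B_i)=0$ and $\Pic(B_i)$ is discrete; it is finitely generated and equal to $\NS(B_i)$. Injectivity of $(L,M)\mapsto L\bo M$ is checked by restriction. Restricting $L\bo M$ to $\{b\}\times A_i$ returns $M$, and restricting to $B_i\times\{0\}$ returns $L$.

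For surjectivity, let $\mathcal L\in\Pic(W_i)$. The restrictions $\mathcal L|_{\{b\}\times A_i}$ form a family over the connected base $B_i$, so they vary in a single component of $\Pic(A_i)$. The classifying morphism $B_i\to\Pic^{\tau}(A_i)$ maps into a translate of $\widehat A_i$. This morphism is constant because $\Alb(B_i)=0$: every morphism from $B_i$ to an abelian variety factors through the Albanese and is therefore constant. It follows that $\mathcal L\otimes\operatorname{pr}_A^*M^{-1}$ is trivial on every fibre of $\operatorname{pr}_B$, where $M=\mathcal L|_{\{b_0\}\times A_i}$. By the seesaw principle, this line bundle is isomorphic to $\operatorname{pr}_B^*L$ for some $L\in\Pic(B_i)$.

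\emph{Isomorphisms.} Let $\phi:W_1\to W_2$ be an isomorphism. I will construct its two factors using the anticanonical map and the Albanese map.

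Since $\Alb(B_i)=0$, the projection $W_i\to A_i$ is an Albanese morphism (the case of Lemma~\ref{quotient-albanese} with trivial group). By functoriality of the Albanese, there is an isomorphism of varieties $u:A_1\to A_2$ with $\operatorname{pr}_{A_2}\phi=u\operatorname{pr}_{A_1}$.

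For the other factor I use $\omega_{W_i}=\operatorname{pr}_B^*\omega_{B_i}$. The anticanonical contraction of $W_i$ is therefore $\operatorname{pr}_{B_i}$, exactly as in Proposition~\ref{quotient-anticanonical} with $H$ trivial. The isomorphism $\phi$ preserves canonical bundles, so it induces an isomorphism of section rings $R(W_1,-K_{W_1})\simeq R(W_2,-K_{W_2})$. These rings are $R(B_1,-K_{B_1})$ and $R(B_2,-K_{B_2})$, because $H^0(A_i,\cO_{A_i})=\C$. Taking Proj gives an isomorphism $f:B_1\to B_2$ with $\operatorname{pr}_{B_2}\phi=f\operatorname{pr}_{B_1}$.

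Since $W_2=B_2\times A_2$, the two compatibilities together force $\phi=f\times u$. Here $u$ is an isomorphism of varieties; it need not preserve the origin.

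\emph{Main obstacle.} The only step requiring care is the identification of the anticanonical model. Because $-K_{B_i}$ is ample and $A_i$ has only constant global functions, it reduces to a routine check.
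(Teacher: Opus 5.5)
Your proof is correct. For the isomorphism statement, though, you take a different route from the paper.

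The paper also obtains $u$ from the Albanese morphism. It then writes $\phi(b,a)=(f_a(b),u(a))$ with a family of isomorphisms $f_a:B_1\to B_2$. It shows this family is constant because $a\mapsto f_0^{-1}f_a$ is a morphism from the connected projective variety $A_1$ into $\Aut(B_1)$. That group is affine, since an anticanonical embedding realizes it as a closed subgroup of a projective linear group.

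You instead obtain the $B$-component directly by functoriality of $\operatorname{Proj}R(W_i,-K_{W_i})=\operatorname{Proj}R(B_i,-K_{B_i})$. This is the case $H=1$ of Proposition~\ref{quotient-anticanonical}.

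\textbf{What each route buys.} Your route needs neither the automorphism group scheme nor a rigidity argument, and it uses the same anticanonical-contraction mechanism that the paper relies on elsewhere. The paper's route yields, as a by-product, that $\Aut(B_i)$ and hence $G_i=\Aut^0(B_i)$ are affine. The paper uses this immediately afterwards, in the text after the lemma and in Lemma~\ref{abelianRouquier}. With your proof, that affineness would need a separate justification.

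\textbf{Picard decomposition.} You apply seesaw along the $A_i$-fibres and kill the variation using $\Alb(B_i)=0$. The paper applies seesaw along the $B_i$-fibres using $\Pic^0(B_i)=0$. These are dual consequences of $H^1(B_i,\cO_{B_i})=0$ and work equally well.

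\textbf{Two minor points.}
\begin{itemize}
\item The classifying morphism $b\mapsto[\mathcal L|_{\{b\}\times A_i}]$ lands in the component of $\Pic(A_i)$ containing $[M]$. That component is a $\widehat A_i$-torsor, but it lies inside $\Pic^\tau(A_i)=\widehat A_i$ only when $M$ is algebraically trivial. Your phrase ``a translate of $\widehat A_i$'' is the correct statement.
\item Your isomorphism argument does not actually use the Picard decomposition, contrary to your opening sentence. It only uses $\omega_{W_i}\simeq\operatorname{pr}_{B_i}^*\omega_{B_i}$, which follows from $\omega_{A_i}\simeq\cO_{A_i}$.
\end{itemize}
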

\begin{proof}
Kodaira vanishing gives $\Pic^0(B_i)=\Alb(B_i)=0$, so projection
to $A_i$ is an Albanese morphism. An isomorphism
$\varphi:W_1\to W_2$ therefore has the form
$\varphi(b,a)=(f_a(b),u(a))$, with $u:A_1\to A_2$ an isomorphism
of varieties and $f_a:B_1\to B_2$ a family of isomorphisms.
Fixing $f_0$ defines a morphism
\[
 \gamma:A_1\longrightarrow\Aut(B_1),
 \qquad a\longmapsto f_0^{-1}\circ f_a.
\]

An anticanonical embedding realizes $\Aut(B_1)$ as the closed
stabilizer of $B_1$ in a projective linear group; in particular it
is affine. Since $A_1$ is connected and projective, $\gamma$ is
constant, and $\gamma(0)=\id$ gives $\varphi=f_0\times u$.

For $\mathcal L\in\Pic(W_i)$, put
$L_B=\mathcal L|_{B_i\times\{0\}}$. The restrictions to the
other $B_i$-fibres are algebraically equivalent to $L_B$, hence
isomorphic because $\Pic^0(B_i)=0$. The see-saw principle
\cite[Proposition~9.4]{Huybrechts} gives
$\mathcal L\simeq L_B\bo L_A$ for some $L_A\in\Pic(A_i)$.
Restriction to the two factors proves uniqueness.
\end{proof}
In particular,
\[
 \Aut(W_i)\simeq\Aut(B_i)\times\Aut(A_i),
 \qquad
 \Pic^0(W_i)\simeq\widehat A_i.
\]
A \emph{standard autoequivalence} of $W_i$ is a
functor $f_*\circ M_L[n]$, where
$f\in\Aut(W_i)$, $L\in\Pic(W_i)$, and $n\in\Z$.
The identity component of the group of their
isomorphism classes is
\[
 \Aut^0(W_i)\ltimes\Pic^0(W_i)
 \simeq G_i\times A_i\times\widehat A_i,
 \qquad G_i:=\Aut^0(B_i).
\]
Indeed, $\Pic^0(B_i)=0$ and translations of $A_i$
preserve algebraically trivial line-bundle classes.
The groups $G_i$ are affine by the preceding proof.
\begin{lemma}
\label{abelianRouquier}
For an equivalence $\Phi$ in Theorem~\ref{prodmain}, the Rouquier
isomorphism restricts to an isometric isomorphism
\[
 R_\Phi:A_1\times\widehat A_1\xrightarrow{\sim}
 A_2\times\widehat A_2.
\]
There is an equivalence $F:\Db(A_1)\xrightarrow{\sim}\Db(A_2)$
whose Rouquier isomorphism is $R_\Phi$.
\end{lemma}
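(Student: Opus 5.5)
The Rouquier isomorphism of $\Phi$ is the group isomorphism
\[
 \Aut^0(W_1)\ltimes\Pic^0(W_1)\xrightarrow{\sim}\Aut^0(W_2)\ltimes\Pic^0(W_2),
 \qquad
 (t,L)\longmapsto \Phi\circ t_*M_L\circ\Phi^{-1}.
\]
It is an isomorphism of algebraic groups \cite[Theorem~4.18]{Rouquier}. By the discussion after Lemma~\ref{geomproduct}, both sides are $G_i\times A_i\times\widehat A_i$, where $G_i=\Aut^0(B_i)$ is a connected affine algebraic group.

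\emph{Step 1: the abelian factors match.} For a connected algebraic group, the maximal connected affine normal subgroup is intrinsic by Chevalley's structure theorem. Here it is $G_i$, with abelian quotient $A_i\times\widehat A_i$. Any algebraic-group isomorphism therefore carries $G_1$ onto $G_2$ and induces an isomorphism $A_1\times\widehat A_1\to A_2\times\widehat A_2$.

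We want more: that it restricts to an isomorphism of the subgroups $A_i\times\widehat A_i$. These subgroups are the centres modulo $Z(G_i)$, and more precisely the maximal abelian subvarieties. An abelian subvariety of $G_i\times A_i\times\widehat A_i$ projects trivially to the affine factor $G_i$, because there are no nonconstant maps from an abelian variety to an affine variety. Hence the product $A_i\times\widehat A_i$ is the unique maximal abelian subvariety, and it is preserved. Call the restriction $R_\Phi$.

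\emph{Step 2: $R_\Phi$ is an Orlov isometry.} I would follow Orlov's argument \cite{Orlov}. Write $\Phi$ with kernel $P$. For $(x,\xi)\in A_1\times\widehat A_1$ with image $(y,\eta)$, the relation
\[
 T_y M_\eta\Phi\simeq\Phi T_xM_\xi
\]
becomes an isomorphism of kernels
\[
 (t_{-x}\times t_y)^*P\otimes(P_{-\xi}\bo P_\eta)\simeq P,
\]
with appropriate signs. The commutator of translation and twist on each side is the scalar given by the Weil/Poincaré pairing $e\bigl((x,\xi),(x',\xi')\bigr)=\xi'(x)\xi(x')^{-1}$, in Orlov's normalization. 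Since conjugation by $\Phi$ preserves commutators of standard autoequivalences as natural transformations, including these scalars (which appear as automorphisms of the identity functor, valued in $\C^*$), $R_\Phi$ preserves the pairing. This pairing condition is exactly the isometry condition \eqref{orlovisometries}, by the standard algebraic reformulation in \cite{Orlov}.

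I expect this commutator bookkeeping to be the main obstacle. The $B$-factor contributes no $\Pic^0$, so the scalars come only from $A$. The subtle point is making the commutator well-defined up to the $\C^*$ of scalar automorphisms of $\id$. I would handle it by working with isomorphism classes of pairs $(\text{functor},\text{isomorphism})$, i.e. the Mumford-type central extension. Alternatively, I would restrict to torsion points and use that $\Phi$ preserves the $e_n$-pairings on $n$-torsion for all $n$; density then gives the isometry.

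\emph{Step 3: realizing $R_\Phi$.} Orlov's theorem \cite{Orlov} states that every isometric isomorphism $A_1\times\widehat A_1\to A_2\times\widehat A_2$ is the Rouquier isomorphism $f_F$ of some equivalence $F:\Db(A_1)\xrightarrow{\sim}\Db(A_2)$. Here $f_F$ equals the Rouquier isomorphism up to a fixed sign convention; if needed, compose with $-1$ on $\widehat A$, or with the dualizing convention, to match. This gives the required $F$.
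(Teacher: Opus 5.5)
\textbf{Verdict: genuine gap in Step 2.}

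Your Steps 1 and 3 agree with the paper. Step 1 works because $G_i=\Aut^0(B_i)$ is affine, so no abelian subvariety maps nontrivially to it; hence the Rouquier isomorphism and its inverse carry $A_1\times\widehat A_1$ onto $A_2\times\widehat A_2$. Step 3 is Orlov's realization theorem. With the convention \eqref{grpOrlov}, the restricted Rouquier isomorphism is literally the Orlov matrix, so no sign correction is needed. Your hedge would not help anyway: composing with $-1$ on $\widehat A$ turns an isometry into an anti-isometry.

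The gap is Step 2, which carries the whole content of the lemma. The commutator scalars you want $\Phi$ to preserve are not defined.
\begin{itemize}
\item If two standard autoequivalences have commuting classes, the isomorphisms $\Psi\Psi'\simeq\Psi'\Psi$ form a $\C^*$-torsor with no preferred element. So no element of $\Aut(\id)$ is singled out.
\item Outside a torsion setting with extra biextension structure, $\xi'(x)$ in your formula only makes sense as the fibre $\mathcal P_{(x,\xi')}$. That is a line, not a number.
\item Conjugation only gives $P\star\mathcal K_{1,v}\star P^R\simeq\mathcal K_{2,R_\Phi(v)}\otimes D_v$ for an unspecified line $D_v$. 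Any Heisenberg-type normalization of the source kernels therefore does not transfer to the target unless you prove that the $D_v$ drop out.
\end{itemize}
Your two proposed repairs are exactly the missing argument, and neither is carried out.
\begin{itemize}
\item The central-extension route needs the extension to be constructed from the kernels, and a proof that conjugation is a homomorphism of extensions up to a coboundary.
\item The torsion route needs three things: a choice-free pairing on $V[n]$ defined purely from kernels (for instance via trivializations of $\mathcal K_v^{\star n}$ and iterated commutation isomorphisms); a computation identifying it with the Weil pairing of $j$; and only then the density argument.
\end{itemize}

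The paper avoids pointwise scalars entirely and uses a discrete invariant computed in families.
\begin{enumerate}
\item It takes the universal standard kernel $\mathcal K_i$ over $V_i=A_i\times\widehat A_i$. These satisfy $\mathcal K_{i,v}\star\mathcal K_{i,w}\simeq\mathcal K_{i,v+w}\otimes\mathcal C_i(v,w)$ for an explicit Poincar\'e line bundle $\mathcal C_i$ on $V_i\times V_i$.
\item By cohomology and base change applied to $Rp_*R\mathcal Hom$, it shows the conjugated family is $(R\times\id)^*\mathcal K_2\otimes p^*D$ for a single global line bundle $D$ on $V_1$.
\item Comparing convolutions gives $(R\times R)^*\mathcal C_2\otimes\operatorname{pr}_1^*D\otimes\operatorname{pr}_2^*D\simeq m^*D\otimes\mathcal C_1$.
\item Antisymmetrizing cancels $D$. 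Reading off the homomorphism attached to the alternating biextension $\mathcal C_i\otimes\tau_i^*\mathcal C_i^{-1}$ then gives $\widehat R j_2 R=j_1$.
\end{enumerate}
Only the isomorphism class of a line bundle on $V_1\times V_1$ is used. That class is insensitive to all the choices that make your commutator scalars ill-defined, and this is the idea your Step 2 is missing.
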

\begin{proof}
Put $W_i=B_i\times A_i$ and $V_i=A_i\times\widehat A_i$.
Rouquier's theorem and Lemma~\ref{geomproduct} identify
$G_1\times V_1$ with $G_2\times V_2$, where $G_i=\Aut^0(B_i)$ is
affine. A homomorphism from an abelian variety to an affine group
is trivial. Applying this to the isomorphism and its inverse gives
an isomorphism $R=R_\Phi:V_1\to V_2$.
We must check that it is an isometry.

For $v=(x,\xi)$, let $\mathcal K_{i,v}$ be the standard kernel of
$\id_{\Db(B_i)}\bo(T_xM_\xi)$, where $T_x=(t_x)_*$.
Let $\mathcal K_i$ denote their universal family on
$V_i\times W_i\times W_i$. On the graph of
$(b,z)\mapsto(b,z+x)$, its line bundle is the pullback of
the normalized Poincar\'e bundle $\mathcal P_i$ on
$A_i\times\widehat A_i$ along $(z,\xi)$.
Let $\mathcal C_i$ be the line bundle on $V_i\times V_i$ obtained
by pulling back $\mathcal P_i$ along
$((x,\xi),(y,\eta))\mapsto(y,\xi)$. Thus its fibre at
$(v,w)$ is $(\mathcal P_i)_{(y,\xi)}$.
The translation identity for $\mathcal P_i$ gives an isomorphism
of relative convolution kernels
\begin{equation}\label{universalconvolution}
 \mathcal K_{i,v}\star\mathcal K_{i,w}
 \simeq\mathcal K_{i,v+w}\otimes\mathcal C_i(v,w).
\end{equation}

Let $P$ represent $\Phi$, let $P^R$ represent $\Phi^{-1}$, and let
$p:V_1\times W_2\times W_2\to V_1$ be projection. The family
$\mathcal Q=P\star\mathcal K_1\star P^R$ has the same fibres as
$\mathcal K'_2=(R\times\id_{W_2\times W_2})^*\mathcal K_2$, by the definition of
$R$ and uniqueness of equivalence kernels. There is therefore a
line bundle $D$ on $V_1$ with
\begin{equation}\label{universalconjugation}
 \mathcal Q\simeq\mathcal K'_2\otimes p^*D.
\end{equation}
To obtain this global isomorphism, put
\[
 \mathcal H=Rp_*R\mathcal Hom(\mathcal K'_2,\mathcal Q).
\]
The two kernel families are perfect on the smooth total space,
and $p$ is smooth and proper. Hence $\mathcal H$ is perfect and
derived base change identifies its fibre at $v\in V_1$ with
$R\Hom(\mathcal K'_{2,v},\mathcal Q_v)$.
The two fibre kernels are isomorphic, and their self-Ext groups
are those of $\cO_{\Delta_{W_2}}$. These fibres therefore have
zero negative cohomology, one-dimensional degree-zero cohomology,
and constant higher cohomology dimensions.
Cohomology and base change show that $\mathcal H$ has no negative
cohomology and that $D=\mathcal H^0(\mathcal H)$ is a line bundle
whose formation commutes with base change.
The truncation map $D\to\mathcal H$, followed by adjunction, gives
an evaluation map
\[
 \mathcal K'_2\otimes p^*D\longrightarrow\mathcal Q.
\]
On every fibre this is the evaluation from the one-dimensional
space of morphisms between two isomorphic simple kernels, hence
an isomorphism. Its cone is perfect and has zero derived fibre
at every point of $V_1$; derived Nakayama proves
\eqref{universalconjugation}.

Compare \eqref{universalconvolution} after conjugation, using the
adjunction $P^R\star P\simeq\cO_\Delta$.
Put $T=V_1\times V_1$, let $m:T\to V_1$ be addition, and let
$q:T\times W_2\times W_2\to T$ be the projection. For the family
$\mathcal S=(m\times\id_{W_2\times W_2})^*\mathcal K'_2$,
the resulting identity has the form
$\mathcal S\otimes q^*L_1\simeq\mathcal S\otimes q^*L_2$, where
\[
 \begin{aligned}
 L_1&=(R\times R)^*\mathcal C_2\otimes
       \operatorname{pr}_1^*D\otimes\operatorname{pr}_2^*D,\\
 L_2&=m^*D\otimes\mathcal C_1.
 \end{aligned}
\]
The same perfectness and base-change argument applies to
$Rq_*R\mathcal Hom(\mathcal S,\mathcal S)$.
Its degree-zero cohomology is a line bundle, and the identity
endomorphism of $\mathcal S$ gives an isomorphism
\[
 \cO_T\xrightarrow{\sim}
 \mathcal H^0\bigl(Rq_*R\mathcal Hom(\mathcal S,\mathcal S)\bigr),
\]
since it is nonzero on every fibre.
Applying $\mathcal H^0Rq_*R\mathcal Hom(\mathcal S,-)$ to the
identity between the two twisted families and using the projection
formula therefore gives $L_1\simeq L_2$, that is,
\[
 (R\times R)^*\mathcal C_2\otimes
 \operatorname{pr}_1^*D\otimes\operatorname{pr}_2^*D
 \simeq m^*D\otimes\mathcal C_1.
\]
If $\tau_i$ exchanges the factors of $V_i^2$, division by the
transposed identity cancels the symmetric $D$ terms and gives
\[
 (R\times R)^*(\mathcal C_2\otimes\tau_2^*\mathcal C_2^{-1})
 \simeq\mathcal C_1\otimes\tau_1^*\mathcal C_1^{-1}.
\]
The homomorphism associated with this alternating Poincar\'e
biextension is $j_i:V_i\to\widehat V_i$, $(x,\xi)\mapsto(\xi,-x)$.
Thus $\widehat Rj_2R=j_1$, which is exactly
\eqref{orlovisometries}. Orlov's realization theorem now supplies
$F$ \cite[Construction 4.10, Propositions 4.11--4.12 and Appendix]{Orlov}.
The factorization used for general isometries is proved in the
Appendix to the 2025 arXiv version.
\end{proof}

\begin{proof}[Proof of Theorem~\ref{prodmain}]
Choose $F_0:\Db(A_1)\xrightarrow{\sim}\Db(A_2)$ as in
Lemma~\ref{abelianRouquier}, and set
\[
 \Phi_0=(\id_{\Db(B_2)}\bo F_0^{-1})\Phi:
 \Db(B_1\times A_1)\xrightarrow{\sim}\Db(B_2\times A_1).
\]
It fixes all algebraically trivial tensor functors on the abelian
factor. The anticanonical contractions of these products are their
projections to $B_i$. By Lemma~\ref{antican}, for every closed point
$z\in B_1\times A_1$ the object $E_z=\Phi_0(k(z))$ is
supported in one fibre
$\{b\}\times A_1$ and satisfies $E_z\otimes p_{A_1}^*P_\xi\simeq E_z$
for every $\xi\in\widehat A_1$.

Let $\mathcal E=\mathcal H^i(E_z)\ne0$, and let $Z$ be a reduced
irreducible component of its support. Choose a smooth projective
resolution $j:\widetilde Z\to Z\hookrightarrow B_2\times A_1$,
and let $\mathcal V$ be the ordinary pullback $j^*\mathcal E$
modulo its torsion subsheaf. Nakayama's lemma at the generic point
of $Z$ gives $n=\operatorname{rk}\mathcal V>0$.
Tensoring by a line bundle preserves the torsion subsheaf, so the
invariances of $\mathcal E$ induce
$\mathcal V\otimes(p_{A_1}j)^*P_\xi\simeq\mathcal V$.
Taking the determinant of this torsion-free sheaf on the smooth
variety $\widetilde Z$ gives
\[
 (p_{A_1}j)^*(P_\xi^{\otimes n})\simeq\cO_{\widetilde Z}
 \qquad(\xi\in\widehat A_1).
\]
Surjectivity of $[n]$ on $\widehat A_1$ shows that
$(p_{A_1}j)^*:\Pic^0(A_1)\to\Pic^0(\widetilde Z)$ is zero.
Albanese--Picard duality then makes $p_{A_1}j$ constant. Since
$Z\subset\{b\}\times A_1$ and $j$ is surjective onto $Z$, the
component $Z$ is a point. Thus $E_z$ has zero-dimensional support.

Full faithfulness gives $\End(E_z)=\C$ and
$\Hom(E_z,E_z[k])=0$ for $k<0$. Together with zero-dimensional
support, the point-object criterion \cite[Lemma~4.5]{Huybrechts}
gives $E_z\simeq k(g(z))[n_z]$. The shift is locally constant
\cite[Corollary~6.14]{Huybrechts}, hence constant on the connected
source. The skyscraper criterion \cite[Corollary~5.23]{Huybrechts}
then gives
\[
 \Phi_0\simeq g_*M_Q[n],\qquad
 g:B_1\times A_1\xrightarrow{\sim}B_2\times A_1.
\]
Lemma~\ref{geomproduct} writes $g=f\times a$ and $Q=R\bo Q_A$.
Taking $F=F_0a_*M_{Q_A}[n]$ proves \eqref{abstractproduct}.
\end{proof}

In particular, lifting and product decomposition show that the abelian
factor $A$ of Theorem~\ref{classification-geometric-partner} satisfies
$\Db(A)\simeq\Db(J)$.

\subsection{Linearization under torsion translations}
With $T_x=(t_x)_*$ and $M_\xi=(-\otimes P_\xi)$ as above, the
isometry of an abelian autoequivalence is characterized by
\begin{equation}\label{grpOrlov}
 F(T_xM_\xi)F^{-1}\simeq T_{ax+b\xi}M_{cx+e\xi},
 \qquad \gamma(F)=\mat{a}{b}{c}{e}\in U(A\times\widehat A).
\end{equation}
Its kernel consists of the functors $T_xM_\xi[k]$
\cite[Proposition~3.3]{Orlov}.
When $A$ is principally polarized, fix a symmetric principal theta
line bundle $\Theta$ and put
\[
 \lambda=\phi_\Theta:A\xrightarrow{\sim}\widehat A,
 \qquad \phi_\Theta(x)=[t_x^*\Theta\otimes\Theta^{-1}].
\]
In principal-polarization coordinates, identify $A$ and
$\widehat A$ by $\lambda$, and write
$v^\dagger=\lambda^{-1}\widehat v\lambda$ for the Rosati involution
on $\End(A)$. In these coordinates an isometry has inverse
\[
 \mat{a}{b}{c}{e}^{-1}
 =\mat{e^\dagger}{-b^\dagger}{-c^\dagger}{a^\dagger}.
\]
All congruences between homomorphisms below mean divisibility in the
integral Hom group. A homomorphism $v:A\to A'$ vanishes on $A[n]$
if and only if it factors through $[n]:A\to A$, or equivalently
$v\in n\Hom(A,A')$.

With these conventions, in principal-polarization coordinates,
\begin{equation}\label{elementary-kernel-matrices}
 \gamma(M_R)=\mat{1}{0}{\lambda^{-1}\phi_R}{1},\qquad
 \gamma(\Phi_{\mathcal P^{-1}})=\mat{0}{1}{-1}{0},\qquad
 \gamma(\Phi_{d^*\Theta})=\mat{1}{1}{0}{1}.
\end{equation}
Here $d:A\times A\to A$ is the difference map, $d(x,y)=y-x$,
and $\mathcal P=(\id_A\times\lambda)^*\mathcal P_A$ is the
Poincar\'e bundle in principal-polarization coordinates.
The last equality follows from
$d^*\Theta\simeq p_1^*\Theta\otimes p_2^*\Theta\otimes\mathcal P^{-1}$.

\begin{lemma}[Translation linearizations between abelian varieties]
\label{general-translation-linearization}
Let $J$ be a complex principally polarized abelian variety with
principal polarization $\lambda:J\xrightarrow{\sim}\widehat J$,
let $A$ be a complex abelian variety, and let
$F:\Db(J)\xrightarrow{\sim}\Db(A)$ be an equivalence.
Write its Orlov isometry as
\[
 f=\begin{pmatrix}a&b\\c&e\end{pmatrix}:
 J\times\widehat J\xrightarrow{\sim}A\times\widehat A.
\]
Let $r\geq2$ and suppose that
$\beta:J[r]\xrightarrow{\sim}A[r]$ is an isomorphism.
The Fourier--Mukai kernel of $F$ admits a linearization under
\[
 \eta\cdot(x,y)=(x+\eta,y+\beta(\eta))
 \qquad(\eta\in J[r])
\]
if and only if
\[
 a|_{J[r]}=\beta,\qquad
 c\in r^2\Hom(J,\widehat A).
\]
\end{lemma}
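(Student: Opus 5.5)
The plan is to separate the problem into two parts. First we establish \emph{invariance} of the kernel under the $J[r]$-action. Then we compute the obstruction to upgrading invariance to a linearization.

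\textbf{Invariance.} Let $K$ be the kernel of $F$. The pullback $(t_\eta\times t_{\beta(\eta)})^*K$ is the kernel of $T_{\beta(\eta)}^{-1}\circ F\circ T_\eta$, possibly up to the sign convention for $t^*$ versus $T$, which we fix at the start. By \eqref{grpOrlov},
\[
 F T_\eta F^{-1}\simeq T_{a\eta}M_{c\eta}.
\]
Hence the pullback is isomorphic to $K$ if and only if $T_{a\eta-\beta(\eta)}M_{c\eta}\simeq\id$. By Orlov's description of the kernel of $\gamma$ (the functors $T_xM_\xi[k]$ are pairwise nonisomorphic for distinct $(x,\xi,k)$), this holds exactly when $a\eta=\beta(\eta)$ and $c\eta=0$. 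Requiring this for all $\eta\in J[r]$ gives the necessary conditions
\[
 a|_{J[r]}=\beta,\qquad c\in r\Hom(J,\widehat A).
\]
The remaining content is the passage from $r$ to $r^2$.

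\textbf{Obstruction.} Assume invariance and choose isomorphisms $\theta_\eta$. By Ploog's obstruction theory \cite{Ploog}, the defects of the cocycle condition define a class in $H^2(J[r],\C^*)$; here we use that $K$ is simple, being the kernel of an equivalence. Since $\C^*$ is divisible, this class is determined by its alternating commutator bicharacter
\[
 \varepsilon:J[r]\times J[r]\to\mu_r .
\]
Concretely, $\varepsilon(\eta,\eta')$ is the scalar by which lifts of $\eta$ and $\eta'$ fail to commute in the group of automorphisms of $K$ covering the action. The kernel is linearizable if and only if $\varepsilon\equiv1$. To compute $\varepsilon$, we work in the Heisenberg (theta) groups of $J\times\widehat J$ and $A\times\widehat A$. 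These are central $\C^*$-extensions, whose elements are the functors $T_xM_\xi$ together with chosen isomorphisms, and whose commutator pairing is
\[
 \langle (x,\xi),(y,\zeta)\rangle=\mathcal P_{(y,\xi)}\,/\,\mathcal P_{(x,\zeta)} .
\]
Conjugation by $F$ defines an isomorphism of these extensions lying over $f$, so it preserves commutators. A lift of $\eta$ acting on $K$ amounts to an identification of $F T_\eta F^{-1}$ with $T_{\beta(\eta)}$ inside the Heisenberg group of $A\times\widehat A$. The discrepancy between the honest image $T_{a\eta}M_{c\eta}$ and $T_{\beta(\eta)}$ is controlled by $c$. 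Writing $c=rc'$, I expect to obtain
\[
 \varepsilon(\eta,\eta')=e_r\bigl(\beta(\eta),\,c'\eta'\bigr)^{\pm1},
\]
where $e_r$ is the Weil pairing $A[r]\times\widehat A[r]\to\mu_r$ and $c'\eta'\in\widehat A[r]$. Since $\beta$ is an isomorphism onto $A[r]$ and $e_r$ is perfect, $\varepsilon\equiv1$ if and only if $c'$ kills $J[r]$, that is, $c'\in r\Hom(J,\widehat A)$, that is, $c\in r^2\Hom(J,\widehat A)$.

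\textbf{Sanity check.} Before the general computation, I would test the formula on the elementary generators \eqref{elementary-kernel-matrices}. For $F=M_R$ with $A=J$ and $\beta=\id$, the kernel is $\Delta_*R$. A $J[r]$-linearization of it is a $J[r]$-linearization of $R$. By Mumford's theory this exists if and only if
\[
 J[r]\subset\ker\phi_R
\]
and the commutator pairing $e^R$ vanishes on $J[r]$. For $\phi_R=r\psi$, this pairing is $e_r(\eta,\psi\eta')$, which reproduces the $r^2$ condition and fixes the sign convention. For $\Phi_{\mathcal P^{-1}}$ and $\Phi_{d^*\Theta}$ one checks the analogous statement directly.

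\textbf{Main obstacle.} The hardest step is the general identification of $\varepsilon$ with $e_r(\beta\eta,c'\eta')$ for an arbitrary isometry. This is needed because no generation statement for $U(J\times\widehat J)$ by elementary matrices is available without hypotheses on $\End(J)$, and $A\ne J$ is allowed. I would first try to do it intrinsically. One expresses the commutator of the two composite isomorphisms
\[
 (t_\eta\times t_{\beta\eta})^*K\xrightarrow{\ \theta_\eta\ }K
\]
via the Heisenberg commutator pairing on $A\times\widehat A$, evaluated on the pair of elements $\bigl(a\eta-\beta\eta,\ c\eta\bigr)$ and $\bigl(a\eta'-\beta\eta',\ c\eta'\bigr)$ refined to level $r^2$. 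This refinement means lifting $\eta$ to $\tilde\eta\in J[r^2]$ and using that $(t_{\tilde\eta}\times t_{\tilde\beta\tilde\eta})^*$ commutes strictly with itself. If the intrinsic route fails, the fallback is to reduce to $A=J$ by composing with an equivalence $\Db(A)\simeq\Db(J)$ that is compatible with $\beta$. One then decomposes the resulting isometry, after passing to an isogenous situation, into upper and lower triangular pieces, for which the explicit $M_R$ computation above applies.
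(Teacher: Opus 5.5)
There is a genuine gap. Your invariance step is correct and agrees with the paper. The gap is the passage from $c\in r\Hom(J,\widehat A)$ to $c\in r^2\Hom(J,\widehat A)$, which is the whole content of the lemma. You only predict this step (``I expect to obtain $\varepsilon(\eta,\eta')=e_r(\beta(\eta),c'\eta')^{\pm1}$''). The prediction is consistent with the statement, but the proposed mechanism cannot produce it, for three reasons.

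\begin{itemize}
\item The Heisenberg groups are not well defined as stated. An algebraic central extension of an abelian variety by $\C^*$ is commutative. As abstract groups, the proposed commutator $\mathcal P_{(y,\xi)}/\mathcal P_{(x,\zeta)}$ is a line, not a scalar, away from torsion points.
\item Even granting such groups, ``conjugation preserves commutators'' says nothing here. The pairing of $(\eta,0),(\eta',0)$ and the pairing of their images $(\beta\eta,0),(\beta\eta',0)$ are both trivial.
\item The framework cannot see the obstruction at all. Suppose conjugation by $F$ were a homomorphism of such extensions, with your dictionary that a lift of $\eta$ is an identification of $FT_\eta F^{-1}$ with $T_{\beta\eta}$. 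It would send the homomorphic section $\eta\mapsto T_\eta$ to a homomorphic section over $\beta(J[r])\times0$, differing from $\eta\mapsto T_{\beta\eta}$ by a character. Every invariant kernel would then be linearizable. This contradicts $M_{\Theta^r}$, which your own sanity check (with $\psi=\lambda$) shows is invariant but not linearizable.
\end{itemize}

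The level-$r^2$ refinement, where a real Weil-pairing computation would have to happen, is not carried out. $\beta$ is not defined on $J[r^2]$, and $K$ is not invariant under $t_{\tilde\eta}\times t_{a\tilde\eta}$; the discrepancy is a twist by $P_{c'\eta}$.

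The fallback also fails on three counts. $U(J\times\widehat J)$ need not be generated by triangular matrices, and the intermediate factors need not be invariant. A comparison equivalence $\Db(A)\simeq\Db(J)$ ``compatible with $\beta$'' presupposes knowing its own obstruction. Passing to an isogeny changes the torsion group being linearized.

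The idea you are missing is the paper's determinant reduction to your line-bundle sanity check, which works for arbitrary isometries.
\begin{itemize}
\item Precompose $F$ with the shear $C_s=\mathcal F^{-1}M_S\mathcal F$, where $s=\lambda^{-1}-\widehat eb$ and $\phi_S=-s$. Its kernel $d^*\mathcal F^{-1}(S)$ is diagonally translation-linearized, so $a$, $c$ and the linearization question are unchanged.
\item The new block $b'=a\lambda^{-1}-b\widehat cb\equiv a\lambda^{-1}\pmod r$ is an isogeny of degree prime to $r$.
\item Orlov's kernel is then a simple vector bundle $\mathcal E$ of rank $\rho=\sqrt{\deg b'}$, prime to $r$. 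Other kernels with the same isometry differ by $T_xM_\xi[k]$, whose kernels are linearizable.
\item The graph $j_a$ of $a$ is equivariant, so $D=j_a^*\det\mathcal E$ has obstruction $\rho\alpha$. Multiplication by $\rho$ is invertible on the $r$-torsion group $H^2(J[r],\C^*)$, so $D$ is linearizable if and only if $\mathcal E$ is.
\item Finally $\phi_D=\rho\widehat ac$, and $D$ is linearizable if and only if it descends through $[r]$, i.e.\ $[D]\in r^2\NS(J)$. Since $\rho\deg a$ is prime to $r$ ($a$ is injective on $J[r]$), this is equivalent to $c\in r^2\Hom(J,\widehat A)$.
\end{itemize}
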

\begin{proof}
Kernel invariance is equivalent to
$FT_\eta F^{-1}\simeq T_{\beta(\eta)}$, where
$T_x=(t_x)_*$.
It therefore requires and is implied by
$a|_{J[r]}=\beta$ and $c\in r\Hom(J,\widehat A)$.
Assume these conditions.
The isomorphism $J[r]\simeq A[r]$ gives $\dim J=\dim A$.
Since $a$ is injective on $J[r]$, the identity component of its
kernel has no nonzero $r$-torsion and hence has dimension zero.
Thus $a$ is an isogeny; its finite kernel has order relatively
prime to $r$.

\emph{A prime-to-$r$ kernel rank.}
We precompose $F$ with an upper shear to make $b$ an isogeny of
degree prime to $r$. This preserves $a$, $c$, the prescribed map
$\beta$, and the linearization condition in both directions.
The isometry identities imply that
$\widehat e b=\widehat b e$ is symmetric. Thus
\[
 s=\lambda^{-1}-\widehat e b:\widehat J\longrightarrow J
\]
is symmetric. Choose a line bundle $S$ on $\widehat J$ with $\phi_S=-s$;
such a line bundle exists because $s$ is symmetric.
For the normalized Poincar\'e transform
$\mathcal F=\Phi_{\mathcal P_J}:\Db(J)\to\Db(\widehat J)$,
the functor $C_s=\mathcal F^{-1}M_S\mathcal F$ induces the
upper shear
$u_s=\left(\begin{smallmatrix}1&s\\0&1\end{smallmatrix}\right)$.
Fourier transform exchanges convolution and tensor product, so
$C_s$ is convolution by $G_s=\mathcal F^{-1}(S)$.
Its kernel is $d^*G_s$, where $d(x,y)=y-x$, and therefore carries
the diagonal translation linearization pulled back from $G_s$.
The inverse $C_s^{-1}=\mathcal F^{-1}M_{S^{-1}}\mathcal F$ has
the same form. Convolution of equivariant kernels consequently
shows that $F$ is linearizable for the graph of $\beta$ if and
only if $FC_s$ is. Its isometry is
\[
 fu_s=\begin{pmatrix}a&as+b\\c&cs+e\end{pmatrix}.
\]
In particular, the first column is unchanged.
Since $a\widehat e-b\widehat c=1_A$, its upper-right block is
\[
 b'=as+b=a\lambda^{-1}-b\widehat c b.
\]
Modulo $r$, this agrees with $a\lambda^{-1}$.
Consequently $b'|_{\widehat J[r]}$ is an isomorphism, and $b'$
is an isogeny of degree relatively prime to $r$.
We may thus assume that $b$ has this property.

Orlov's construction
\cite[Construction 4.10 and Propositions 4.11--4.12]{Orlov} gives a
vector bundle $\mathcal E$ on $J\times A$ with
$\End(\mathcal E)=\C$, isometry $f$, rank
$\rho=\sqrt{\deg b}$, and
\[
 \frac{\phi_{\det\mathcal E}}{\rho}
 =
 \begin{pmatrix}
 b^{-1}a&-b^{-1}\\
 -(\widehat b)^{-1}&eb^{-1}
 \end{pmatrix}.
\]
The inverse $b^{-1}$ is taken in
$\Hom(A,\widehat J)\otimes_{\Z}\mathbb Q$.
The displayed homomorphism corresponds to the rational divisor
class $[\det\mathcal E]/\rho$ in $\NS(J\times A)\otimes_{\Z}\mathbb Q$.
Moreover, $\rho^2=\deg b$ is the degree of the projection of the graph
of $f$ to $J\times A$; in particular, $\gcd(\rho,r)=1$.
By \cite[Proposition~3.3]{Orlov}, any other equivalence with
isometry $f$ differs on the target by $T_xM_\xi[k]$.
Translations and shifts have diagonal $A[r]$-linearized kernels;
for $M_\xi$ this follows from the surjectivity of
$[r]^*:\Pic^0(A)\to\Pic^0(A)$ and descent through $[r]$.
Their inverses have the same property, so it suffices to test
$\mathcal E$.

\emph{The determinant obstruction.}
Let $\alpha\in H^2(J[r],\mathbb C^*)$ be the obstruction to
linearizing the simple invariant bundle $\mathcal E$
\cite[Lemma~1]{Ploog}; the coefficient action is trivial.
The graph map
$j_a:J\to J\times A$, $x\mapsto(x,a(x))$, is equivariant for
the specified action because $a|_{J[r]}=\beta$.
The line bundle $D=j_a^*\det\mathcal E$ has obstruction
$\rho\alpha$. To see this, write $g_\eta$ for the specified
translation on $J\times A$
and choose isomorphisms
$\theta_\eta:g_\eta^*\mathcal E\xrightarrow{\sim}\mathcal E$.
Simplicity gives a scalar two-cocycle $u(\eta,\zeta)$ satisfying
\[
 \theta_\eta\circ g_\eta^*\theta_\zeta
 =u(\eta,\zeta)\theta_{\eta+\zeta},
 \qquad [u]=\alpha.
\]
Taking determinants replaces $u(\eta,\zeta)$ by
$u(\eta,\zeta)^\rho$. Since $j_a$ is equivariant, pulling these
determinant isomorphisms back along $j_a$ leaves the same scalar
defect. Thus the obstruction of $D$ is $\rho\alpha$ in the same
group $H^2(J[r],\mathbb C^*)$.
For a finite abelian group $H$, divisibility of $\mathbb C^*$
gives $H^2(H,\mathbb C^*)\simeq
\Hom(\bigwedge^2H,\mathbb C^*)$.
Thus $H^2(J[r],\mathbb C^*)$ is killed by $r$.
Since $\gcd(\rho,r)=1$, multiplication by $\rho$ is an
automorphism of this group. The obstruction of $D$ therefore
vanishes if and only if $\alpha$ vanishes.

The isometry identities give
$c=eb^{-1}a-(\widehat b)^{-1}$.
Restricting the displayed slope homomorphism along $j_a$ gives
the particularly simple identity
\[
 \phi_D
 =\rho\bigl(-\widehat a(\widehat b)^{-1}
              +\widehat aeb^{-1}a\bigr)
 =\rho\,\widehat a c.
\]
\emph{Integral divisibility.}
A line bundle on $J$ is linearizable under translation by $J[r]$
if and only if it descends through $[r]:J\to J$.
Because $[r]^*$ is multiplication by $r^2$ on $\NS(J)$ and
is surjective on $\Pic^0(J)$, this is equivalent to
$[D]\in r^2\NS(J)$.
The Appell--Humbert description identifies $\NS(J)$ with the
subgroup of symmetric homomorphisms $J\to\widehat J$.
This subgroup is saturated: if $mv$ is symmetric for a nonzero
integer $m$ and $v\in\Hom(J,\widehat J)$, then
$m(v-\widehat v)=0$. The group $\Hom(J,\widehat J)$ is
torsion free, so $v=\widehat v$.
Hence this last condition is equivalent to
$\rho\,\widehat a c\in r^2\Hom(J,\widehat J)$.
Put $n=\deg(a)=\deg(\widehat a)$. Since $[n]_{\widehat A}$
kills $\ker\widehat a$, it factors through $\widehat a$,
giving an isogeny $z:\widehat J\to\widehat A$ with
$z\widehat a=[n]_{\widehat A}$.
Composing the divisibility condition with $z$ shows that
$\rho n c$ is divisible by $r^2$.
Since $\gcd(\rho n,r^2)=1$, there are integers $u,v$ with
$u\rho n+vr^2=1$. Hence
$c=u(\rho n c)+vr^2c$ is divisible by $r^2$.
The converse is immediate, proving the assertion.
\end{proof}

\begin{corollary}[The diagonal torsion criterion]\label{abeliandescent}
Let $A$ be a principally polarized abelian variety and
$F\in\Aut\Db(A)$ have Orlov matrix $\mat{a}{b}{c}{e}$.
Its kernel admits a diagonal $A[r]$-linearization if and only if
\begin{equation}\label{rankrcriterion}
 a-\id\in r\End(A),\quad e-\id\in r\End(\widehat A),\quad
 c\in r^2\Hom(A,\widehat A).
\end{equation}
Every integral isometry satisfying these conditions is realized
by such a linearized equivalence.
\end{corollary}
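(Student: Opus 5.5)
Specialize Lemma~\ref{general-translation-linearization} to $J=A$, with $\lambda$ the given principal polarization. The diagonal $A[r]$-action on $A\times A$, namely $\eta\cdot(x,y)=(x+\eta,y+\eta)$, is the action of that lemma for $\beta=\id_{A[r]}$. So the kernel of $F$ admits a diagonal linearization if and only if
\[
 a|_{A[r]}=\id,\qquad c\in r^2\Hom(A,\widehat A).
\]
By the convention recalled before \eqref{elementary-kernel-matrices}, the first condition is equivalent to $a-\id\in r\End(A)$. It remains to show that, once these two conditions hold, the condition $e-\id\in r\End(\widehat A)$ in \eqref{rankrcriterion} is automatic. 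The converse direction of the equivalence is then immediate, since \eqref{rankrcriterion} contains both conditions of the lemma.

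For the automatic condition I would use the isometry identity $f^{-1}f=1$ from \eqref{orlovisometries}. In the upper-left block it reads
\[
 \widehat e a-\widehat b c=1_A.
\]
Since $c$ is divisible by $r$, this gives $\widehat e a\equiv1$ modulo $r$. Since $a\equiv1$ modulo $r$, we get $\widehat e\equiv\widehat e a\equiv1$ modulo $r$, that is, $\widehat e-\id\in r\End(A)$.

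Dualizing is compatible with divisibility: if $v=rw$ then $\widehat v=r\widehat w$, and $\widehat{\widehat e}=e$ under $\widehat{\widehat A}=A$. Hence $e-\id\in r\End(\widehat A)$. Alternatively, one can argue directly on torsion. Restricting $\widehat e a-\widehat b c=1$ to $A[r]$, where $c$ vanishes and $a$ acts as the identity, shows that $\widehat e$ is the identity on $A[r]$. The Weil pairing $A[r]\times\widehat A[r]\to\mu_r$ identifies $e|_{\widehat A[r]}$ with the transpose-inverse of $\widehat e|_{A[r]}$, so $e$ is the identity on $\widehat A[r]$ as well. I would write the proof with the algebraic version, as it is cleaner.

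For the realization statement, let $f$ be any integral isometry of $A\times\widehat A$ satisfying \eqref{rankrcriterion}. Orlov's realization theorem, as cited in the proof of Lemma~\ref{abelianRouquier}, gives an equivalence $F$ with $\gamma(F)=f$. Its kernel satisfies the criterion just proved, so it admits a diagonal $A[r]$-linearization. There is no real obstacle here. The one point to check is that the block identity is read with the paper's convention $f^{-1}=\mat{\widehat e}{-\widehat b}{-\widehat c}{\widehat a}$, so that the upper-left entry of $f^{-1}f$ is indeed $\widehat e a-\widehat b c$.
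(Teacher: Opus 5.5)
Your proof is correct and matches the paper's: specialize Lemma~\ref{general-translation-linearization} to $\beta=\id$, show that $e\equiv\id\pmod r$ is forced by an isometry block identity, and invoke Orlov's realization theorem for the last assertion. The only difference is cosmetic. The paper reads off the lower-right block $\widehat a e-\widehat c b=\id$ of $f^{-1}f=1$, which gives $e\equiv\id$ directly in $\End(\widehat A)$ and avoids your dualization step.
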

\begin{proof}
Apply Lemma~\ref{general-translation-linearization} with
source and target both $A$ and $\beta=\id$.
The isometry identity $\widehat a e-\widehat c b=\id$
makes $e\equiv\id\pmod r$ follow from
$a\equiv\id$ and $c\equiv0\pmod r$.
Orlov's realization theorem and the lemma give the last assertion.
\end{proof}

For tensoring by a line bundle $R$, the criterion is
$[R]\in r^2\NS(A)$, precisely descent through $[r]$.
Thus $M_{\Theta^r}$ has an invariant kernel with nonzero
linearization obstruction, whereas $[r]^*\Theta\simeq\Theta^{r^2}$
is linearizable.
\begin{lemma}[Linearizations of an exterior product]\label{factorlinearization}
Let a finite group $G$ act on smooth projective varieties $V$ and $W$.
Let $K_V$ be a coherent sheaf with $\End(K_V)=\C$, equipped
with a $G$-linearization, and let $K_W$ be a coherent sheaf up to
a shift. Then $K_V\bo K_W$ is $G$-linearizable for the diagonal
action if and only if $K_W$ is $G$-linearizable.
Fixing the linearization of $K_V$ identifies the two sets of
linearization structures.
\end{lemma}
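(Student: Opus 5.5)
The plan is to compare linearizations of $K_V\bo K_W$ with linearizations of $K_W$ directly, through the Künneth formula. Since the action is diagonal, $g^*(K_V\bo K_W)\simeq g^*K_V\bo g^*K_W$. Given a linearization $\psi_g$ of $K_W$, the family $\theta_g\bo\psi_g$ satisfies the cocycle identity factorwise, so it is a linearization of $K_V\bo K_W$. This gives the easy direction and also a map from linearizations of $K_W$ to linearizations of the product.

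For the converse, I would use Künneth on the smooth projective product:
\[
 \Hom(g^*K_V\bo g^*K_W,K_V\bo K_W)\simeq\bigoplus_i\Hom^i(g^*K_V,K_V)\otimes\Hom^{-i}(g^*K_W,K_W).
\]
Both $K_V$ and $K_W$ are sheaves up to a fixed shift, so negative Ext groups vanish and only the $i=0$ term survives. Since $g^*K_V\simeq K_V$ via $\theta_g$ and $\End(K_V)=\C$, the Hom space from $g^*K_V$ to $K_V$ is the line $\C\theta_g$. Hence every morphism $g^*(K_V\bo K_W)\to K_V\bo K_W$ has the form $\theta_g\bo\psi$ for a unique $\psi\in\Hom(g^*K_W,K_W)$.

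Now I would start from a linearization $\Theta_g$ of the product and write $\Theta_g=\theta_g\bo\psi_g$. The remaining steps are as follows.
\begin{enumerate}
\item Show $\psi_g$ is an isomorphism. Apply the same Künneth decomposition to $\Theta_g^{-1}$ to write it as $\theta_g^{-1}\bo\psi'_g$, and then use uniqueness of the decomposition to get $\psi_g\psi'_g=\id$ and $\psi'_g\psi_g=\id$.
\item Verify the cocycle identity for $\psi_g$. Compare $\Theta_{gh}=\Theta_h\circ h^*\Theta_g$ with the cocycle identity already satisfied by $\theta$. Injectivity of $\psi\mapsto\theta_{gh}\bo\psi$ then yields the cocycle identity for $\psi$.
\end{enumerate}

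The two constructions are mutually inverse, which gives the bijection of linearization structures once the linearization of $K_V$ is fixed.

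The main obstacle is the Künneth and degree bookkeeping. One needs the exterior tensor product to be compatible with composition and with $h^*$, so that $h^*(\theta_g\bo\psi_g)=h^*\theta_g\bo h^*\psi_g$ and composites multiply factorwise. One also needs the vanishing of negative Ext groups, which is where the hypothesis that $K_W$ is a sheaf up to shift is used.
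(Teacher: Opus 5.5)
Your proposal is correct and matches the paper's proof. Both arguments use the degree-zero K\"unneth formula, together with vanishing of negative Ext groups and $\End(K_V)=\C$, to factor each product isomorphism uniquely as $\theta_g\bo\psi_g$ and then compare the cocycle conditions factorwise. Your two enumerated steps, invertibility of $\psi_g$ via $\Theta_g^{-1}$ and the cocycle identity via injectivity of $\psi\mapsto\theta_{gh}\bo\psi$, spell out what the paper compresses into ``these identifications respect composition and pullback''.
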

\begin{proof}
Use the fixed isomorphism $g^*K_V\simeq K_V$, for $g\in G$.
Since negative Ext groups between coherent sheaves vanish,
the degree-zero K\"unneth formula gives
\[
 \Hom(K_V\bo g^*K_W,K_V\bo K_W)
 \simeq\End(K_V)\otimes\Hom(g^*K_W,K_W)
 \simeq\Hom(g^*K_W,K_W).
\]
These identifications respect composition and pullback.
The product isomorphisms therefore factor uniquely into the
fixed isomorphisms on $K_V$ and isomorphisms on $K_W$,
and the cocycle conditions agree.
\end{proof}

\begin{lemma}[Linearization of the theta generator]\label{thetalinearization}
For $N=SU_C(r,L)$ with $\gcd(r,d)=1$, the ample generator
$\Theta_N$ of $\Pic(N)$ admits a linearization for the action of
$H=J[r]$ by tensor product.
\end{lemma}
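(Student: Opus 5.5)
The plan is to show that $\Theta_N$ is $H$-invariant and that the obstruction class to linearizing it vanishes.

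\emph{Invariance.} Since $\Pic(N)=\Z[\Theta_N]$ by \eqref{modpicard}, each automorphism $s_\eta$ sends $\Theta_N$ to an ample generator of $\Pic(N)$. That generator must be $\Theta_N$ itself, so $s_\eta^*\Theta_N\simeq\Theta_N$.

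\emph{The obstruction class.} We have $H^0(N,\cO_N)=\C$, and the action of $H$ on the scalars $\C^*$ is trivial. As in \cite[Lemma~1]{Ploog}, the obstruction to linearizing $\Theta_N$ is therefore a class
\[
 \alpha\in H^2(H,\C^*)\simeq\Hom\bigl(\textstyle\bigwedge^2H,\C^*\bigr).
\]
This group is killed by $r$. Equivalently, we must split the extension $1\to\C^*\to\mathcal G\to H\to1$, where $\mathcal G$ is the group of pairs consisting of some $s_\eta$ and a lift of it to $\Theta_N$.

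\emph{Key step: a linearizable power prime to $r$.} The plan is to find an integer $m$ with $\gcd(m,r)=1$ such that $\Theta_N^{m}$ is $H$-linearizable. The obstruction is multiplicative in the line bundle, so the class of $\Theta_N^m$ is $m\alpha$. If $m\alpha=0$, then $\alpha=0$, because multiplication by $m$ is an automorphism of a group killed by $r$.

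\emph{Candidate power: the anticanonical bundle.} The bundle $\omega_N^{-1}=\Theta_N^2$ carries the canonical linearization given by the differential of the action. When $r$ is odd we take $m=2$, and this already finishes the argument.

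\emph{Even $r$.} Now $d$ is odd, and a different power is needed. Write $h=\gcd(r,d)=1$. The theta bundle $\Theta_N$ is the descent of a determinant-of-cohomology line bundle. Concretely, for a suitable vector bundle $F$ on $C$ with $\chi(E\otimes F)=0$, consider the locus $\{E:H^0(E\otimes F)\neq0\}$; one can take $F$ of rank $r/h$ and degree chosen appropriately.

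We try $F$ of rank $r'=r$ and degree $r(g-1)-d$, so that $\chi(E\otimes F)=0$ is possible. In fact any $F$ with $\operatorname{rk}F\cdot d+r\deg F=rr'(g-1)$ gives $\Theta_N^{\operatorname{rk}F/r}$. A natural choice is $F=G\otimes\xi$ with $G$ fixed and $\xi$ of degree zero. On such bundles, $J[r]$ acts by $E\mapsto E\otimes\eta$. Tensoring $F$ by $\eta^{-1}$ intertwines $\det R\Gamma(E\otimes F)$ with $\det R\Gamma(E\otimes\eta\otimes F\eta^{-1})$, but it changes $F$.

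To obtain an honest linearization, take $F$ that is $H$-invariant up to isomorphism, for example $F=\pi_*\cO$ of the multiplication-by-$r$ isogeny. Alternatively, take $F=\bigoplus_{\eta\in H}G\otimes\eta$. Then $\operatorname{rk}F=r^{2g}\operatorname{rk}G$, which gives $\Theta_N^{\,r^{2g-1}\operatorname{rk}G}$. That exponent is not prime to $r$, so this route needs refinement.

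\emph{Main obstacle: computing $\alpha$ directly.} I therefore expect the real difficulty to be computing $\alpha$ itself. For that I would use the explicit Heisenberg-type description: the commutator pairing $e(\eta,\zeta)$ on $\mathcal G$ is read off at a fixed point, or through the cover $N\times J\to M$. Since the obstruction is a pairing, it suffices to evaluate it on pairs $\eta,\zeta$. For such a pair, consider the cyclic subgroup generated by $\eta$ and the fibre $\Theta_N|_{N^\eta}$. By Lemma~\ref{modfixed}, the locus $N^\eta$ is nonempty, because it is built from bundles $p_*V$ on the cyclic cover $C_\eta$. One then checks that $\zeta$ acts on a lift over $N^\eta$ by the scalar $e(\eta,\zeta)$. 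The plan is to compute this scalar from the Weil pairing together with the degree $d$, via \cite[Lemma~2.1]{BH}, and to show that it equals $e_r(\eta,\zeta)^{d\cdot k}$ for some $k$ that makes it trivial. The crucial computation is the character by which $\zeta$ acts on the fibre of $\Theta_N$ at points of $N^\eta$.
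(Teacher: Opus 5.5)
\paragraph{Odd rank.} Your invariance step is correct, and so is your argument for odd $r$. That argument takes a different and more self-contained route than the paper's. Since $\omega_N^{-1}=\Theta_N^2$ carries the canonical linearization coming from the differential of the action, the obstruction satisfies $2\alpha=0$. The group $H^2(H,\C^*)\simeq\Hom(\bigwedge^2H,\C^*)$ is killed by $r$, so $\alpha=0$ when $r$ is odd. The paper instead handles all $r$ at once. It identifies linearizations with splittings of the theta group $\mathcal G_H(\Theta_N)$ and quotes from \cite{BLS} that its commutator pairing is $e_r^{-r/\gcd(r,d)}$. Coprimality makes this pairing trivial, and the resulting commutative extension splits because $\C^*$ is divisible.

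\paragraph{Even rank: a genuine gap.} For even $r$ your proposal stops at a plan. The lemma is used for every $r$, including $r=2$, in the proof of Theorem~\ref{partner-pair-classification} and in Lemma~\ref{thetadescent}, so this case cannot be skipped.
\begin{itemize}
\item Your anticanonical argument only shows that $\alpha$ is $2$-torsion, which is vacuous for $r=2$. Some input using the determinant-of-cohomology description of $\Theta_N$ together with $\gcd(r,d)=1$ is therefore unavoidable.
\item Your $H$-invariant choices of $F$ give exponents divisible by $r$, as you note. Also, the pushforward of $\cO_J$ under $[r]$ lives on $J$, not on $C$; its analogue $\bigoplus_{\eta\in H}\eta$ on $C$ has the same defect.
\item The fixed-point computation is only announced. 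Lemma~\ref{modfixed} does not assert $N^\eta\neq\emptyset$. Evaluating the commutator there would require knowing how $s_\zeta$ permutes the components of $N^\eta$, and the eigenvalue of a normalized lift of $s_\eta$ on $\Theta_N$ over each component. That is the whole content of the computation.
\end{itemize}

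\paragraph{A way to close the gap.} This follows your own remark about the cover.
\begin{enumerate}
\item Choose $F$ of rank $r$ with $\chi(E\otimes F)=0$, and let $\Theta_F$ be the Drezet--Narasimhan theta bundle on $M$.
\item By Lemma~\ref{geomproduct}, $\pi^*\Theta_F\simeq\Theta_N\bo\Lambda$. Coprimality is exactly what makes the $N$-factor the generator $\Theta_N$.
\item The factor $\Lambda$ is $x\mapsto\det R\Gamma(E_0\otimes F\otimes x)^{-1}$, so it is algebraically equivalent to $\Theta^{r^2}$.
\item Hence $\Lambda$ descends through $[r]$, as in the proof of Lemma~\ref{general-translation-linearization}, and its $J[r]$-theta group is commutative.
\item Commutator pairings multiply for exterior products under a diagonal action, by K\"unneth as in Lemma~\ref{factorlinearization}.
\item Since $\pi^*\Theta_F$ carries its canonical $H$-linearization, the pairing of $\Theta_N$ must be trivial, for every $r$.
\end{enumerate}
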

\begin{proof}
Every element of $H$ preserves $\Theta_N$, since it preserves the
ample generator of $\Pic(N)\simeq\Z$.
The pairs $(\eta,u)$ with $\eta\in H$ and
$u:s_\eta^*\Theta_N\xrightarrow{\sim}\Theta_N$, where
$s_\eta(E)=E\otimes\eta$, form its
theta group, a central extension
\[
 1\longrightarrow\C^*\longrightarrow\mathcal G_H(\Theta_N)
 \longrightarrow H\longrightarrow1.
\]
A splitting is precisely an $H$-linearization.
Let $e_r:H\times H\to\mu_r$ be the Weil pairing induced by
the principal polarization of $J$, where $\mu_r\subset\C^*$
is the group of $r$-th roots of unity.
The commutator of this extension is $e_r^{-r/\gcd(r,d)}$
\cite[Section~10, following Proposition~10.1]{BLS}.
Coprimality makes it trivial.
A commutative central extension of a finite abelian group by
$\C^*$ splits: lift cyclic generators and rescale them to have
the required orders.
\end{proof}
This linearization does not assert descent to the coarse quotient
$N/H$: such descent also requires stabilizers to act trivially on
the corresponding fibres of $\Theta_N$.

\section{Fourier--Mukai partners}\label{secpartners}
Keep $C,r,d,L,N,J,H$ as in the introduction.

\subsection{Proof of the classification}
Let $\mathcal P_r(J)$ be the set of isomorphism classes of pairs
$(A,\beta)$ satisfying \eqref{partnerisometrycriterion}; pairs are
identified by group isomorphisms $u:A\to A'$ with $u\beta=\beta'$.
We prove that $(A,\beta)\mapsto Y_{A,\beta}$ is a bijection
$\mathcal P_r(J)\simeq\operatorname{FM}(M)$ and that these sets are
finite. For isometry criteria for torsors under a fixed abelian variety
over a field, see \cite[Theorem~5.1]{AKW}.

\begin{proof}[Proof of Theorem~\ref{partner-pair-classification}]
For every pair in $\mathcal P_r(J)$, Orlov's existence theorem
and Lemma~\ref{general-translation-linearization} provide an
equivalence kernel from $J$ to $A$ with the indicated
$J[r]$-linearization. Replacing $\eta$ by $-\eta$ and taking
the exterior product with $\mathcal O_{\Delta_N}$ gives an
$H$-linearized equivalence kernel for the two free product actions,
with $H$ acting diagonally.
Equivariant Fourier--Mukai descent
\cite[Proposition 4.2]{KS} proves that $Y_{A,\beta}$
is a partner of $M$.

Conversely, Theorem~\ref{classification-geometric-partner} writes
every partner as $Y_{A,\beta}$, with $\beta=-\iota$ in that theorem.
Apply Theorems~\ref{liftmain} and~\ref{prodmain} to an equivalence
from $M$ to this quotient. The $N$-factor of the lift
is the pushforward by a tensoring automorphism of $N$,
followed by tensoring with
$\Theta_N^m$, with a shift absorbed in its abelian factor.
By \eqref{fixedaut}, its automorphism $f_N$ commutes with the
faithful $H$-action. Equivariance of the support gives
$f_Ns_\eta=s_{\nu(\eta)}f_N$, hence $\nu=\id_H$.
Lemma~\ref{thetalinearization} gives an
$H$-linearization on the kernel of the $N$-factor.
The abelian kernel is a coherent sheaf up to shift
\cite[Proposition 3.2]{Orlov}. Lemma~\ref{factorlinearization}
therefore applies and shows that this kernel inherits
a linearization under the graph subgroup of $J[r]\times A[r]$
defined by $\beta$, after replacing $\eta$ by $-\eta$.
Lemma~\ref{general-translation-linearization} then places
$(A,\beta)$ in $\mathcal P_r(J)$.

For the isomorphism criterion, Lemma~\ref{quotient-albanese}
identifies the Albanese morphisms as
$[E,x]\mapsto rx\in A$.
An isomorphism $Y_{A,\beta}\to Y_{A',\beta'}$ induces an
affine isomorphism between $A$ and $A'$.
Write this affine map as $x\mapsto u(x)+t$ and choose $s\in A'$
with $rs=t$. The map $x\mapsto u(x)+s$ lifts it through $[r]$;
the Cartesian Albanese squares therefore give an isomorphism
between the product covers.
Lemma~\ref{geomproduct} writes this lift as
$f_N\times(x\mapsto u(x)+s)$, where $u:A\to A'$ is a group
isomorphism.
The $N$-component is a tensor automorphism and therefore
centralizes the faithful $J[r]$ action.
The induced automorphism of the deck group is consequently
the identity. Comparison of the abelian components gives
$u\beta=\beta'$.
Conversely, this equality makes $\id_N\times u$ equivariant,
so it descends to an isomorphism of the quotients.

There are finitely many isomorphism classes of abelian
Fourier--Mukai partners of $J$ \cite[Corollary 2.20]{Orlov}, and, for each representative
$A$, there are only finitely many isomorphisms
$J[r]\to A[r]$. Thus $\mathcal P_r(J)$ is finite.
\end{proof}
\subsection{Explicit partners and the scalar case}
The scalar construction requires no restriction on the endomorphism
ring or on the action on the projective factor.

\begin{proposition}[Scalar quotient construction]\label{partnerconstruction}
Let $A$ be a principally polarized abelian variety, let $r\geq2$,
and let $B$ be a smooth connected projective variety with an action
of $H=A[r]$. For $k\in(\Z/r\Z)^\times$, let
\[
 Y_k=(B\times A)/H,\qquad
 \eta\cdot(b,x)=(\eta\cdot b,x-k\eta).
\]
Then $\Db(Y_1)\simeq\Db(Y_k)$.
\end{proposition}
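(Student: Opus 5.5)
We build an $H$-linearized equivalence kernel on $(B\times A)\times(B\times A)$ for the two free actions and then descend it, mirroring the first half of the proof of Theorem~\ref{partner-pair-classification}. The kernel is the exterior product $\cO_{\Delta_B}\bo K$, where $K\in\Db(A\times A)$ is the kernel of an autoequivalence of $A$. The group $H$ acts on the first copy of $B\times A$ by $\eta\cdot(b,x)=(\eta b,x-\eta)$ and on the second by $\eta\cdot(b,y)=(\eta b,y-k\eta)$, diagonally on the product. The diagonal $\cO_{\Delta_B}$ carries its canonical linearization for the diagonal action of $H$ on $B\times B$, and it is simple, so it meets the hypotheses of Lemma~\ref{factorlinearization}. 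It therefore suffices to find an equivalence $F:\Db(A)\xrightarrow{\sim}\Db(A)$ whose kernel is a sheaf up to shift and is linearizable under $\eta\cdot(x,y)=(x-\eta,y-k\eta)$. Reparametrizing by $\eta\mapsto-\eta$, this is the graph action with $\beta=[k]$. Equivariant descent \cite[Proposition~4.2]{KS} then gives $\Db(Y_1)\simeq\Db(Y_k)$.

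The kernel on $A$ comes from Lemma~\ref{general-translation-linearization}, applied with $J=A$ (principally polarized) and $\beta=[k]|_{A[r]}$. By that lemma it is enough to produce an Orlov self-isometry $\mat{a}{b}{c}{e}$ of $A\times\widehat A$ with
\[
 a\equiv k\pmod r,\qquad c\in r^2\Hom(A,\widehat A),
\]
and then take any equivalence realizing it, which exists by Orlov's realization theorem. By \cite[Proposition~3.2]{Orlov}, the kernel of such an equivalence is a sheaf up to shift. We construct the isometry in principal-polarization coordinates using only integer scalar matrices, so no assumption on $\End(A)$ is needed.

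Choose $k'$ with $kk'\equiv1\pmod r$. Since $\gcd(k,r^2)=1$, we may choose integers $a\equiv k\pmod r$ and $e$ satisfying $ae\equiv1\pmod{r^2}$. Then $ae-1=bc$ with $c=r^2$ and $b=(ae-1)/r^2$, and $\mat{a}{b}{c}{e}\in\SL_2(\Z)$. An integer matrix acting by scalars on $A\times A\simeq A\times\widehat A$ via $\lambda$ is an Orlov isometry exactly when its determinant is $1$: scalars are fixed by the Rosati involution, so the required inverse formula $\mat{e}{-b}{-c}{a}$ is the adjugate inverse. Its lower-left block is $r^2\lambda\in r^2\Hom(A,\widehat A)$, and its upper-left block restricts to $[k]$ on $A[r]$. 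Alternatively, one can realize the matrix explicitly as a word in the generators of \eqref{elementary-kernel-matrices} that generate $\SL_2(\Z)$.

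The main point to check is sign and convention bookkeeping. We must confirm that the action $x\mapsto x-k\eta$ corresponds to $\beta=[k]$ and not $[-k]$, and that Lemma~\ref{factorlinearization} applies with $K_W$ the abelian kernel. Both are harmless: $[-k]$ is handled by the same construction with $a\equiv-k$, and $k$ ranges over all units anyway. The lower-left condition $c\in r^2\Hom$ is the genuinely restrictive one, and it is met because $c=r^2$ is an integer multiple of the polarization.
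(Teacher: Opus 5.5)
Your proposal is correct and follows the paper's proof: you take the exterior product of $\cO_{\Delta_B}$ with an abelian kernel, linearize it through Lemma~\ref{general-translation-linearization} with $\beta=[k]$ after the substitution $\eta\mapsto-\eta$, and descend by \cite[Proposition~4.2]{KS}. The only difference is the isometry: the paper uses the explicit line-bundle kernel $K_k$ with matrix $\mat{k}{1}{ke-1}{e}$, $ke\equiv1\pmod{r^2}$, whereas you take $\mat{a}{(ae-1)/r^2}{r^2}{e}$ and invoke Orlov's realization theorem, which works equally well.
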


\begin{proof}
Fix a symmetric principal theta line bundle $\Theta$, an integer
representative $k$, and $e\in\Z$ with $ke\equiv1\pmod{r^2}$.
On $A\times A$, using the
principal polarization to pull back the normalized Poincar\'e bundle, set
\begin{equation}\label{partnerkernel}
 K_k=p_1^*\Theta^k\otimes p_2^*\Theta^e\otimes\mathcal P^{-1}.
\end{equation}
Its transform is a composition of tensor functors and the
Poincar\'e equivalence. Its Orlov matrix is
$\mat{k}{1}{ke-1}{e}$.
Lemma~\ref{general-translation-linearization} therefore gives
a linearization for $(x,y)\mapsto(x+\eta,y+k\eta)$.
After replacing $\eta$ by $-\eta$, the exterior product with
$\cO_{\Delta_B}$ descends to the required equivalence
\cite[Proposition 4.2]{KS}.
\end{proof}

\begin{proof}[Proof of Corollary~\ref{complete-scalar-partners}]
For a pair $(A,\beta)$ in $\mathcal P_r(J)$, an Orlov isometry
identifies $A\times\widehat A$ with $J\times\widehat J\simeq J^2$.
The projection onto $A$ gives a rank-one idempotent in
$\End(J^2)=M_2(\mathbb Z)$. Its image and kernel are rank-one
direct summands of $\mathbb Z^2$, so a basis adapted to them
conjugates it to $\operatorname{diag}(1,0)$ over $\mathbb Z$.
Thus $A\simeq J$.
After this identification every block of the Orlov isometry
is an integer, so $\beta=[k]$ for some
$k\in(\mathbb Z/r\mathbb Z)^\times$.
Proposition~\ref{partnerconstruction} realizes every such $k$.
Finally, $\Aut_{\mathrm{gp}}(J)=\{1,-1\}$.
Its action on the scalar units is free for $r\geq3$;
for $r=2$ there is a single unit.
\end{proof}
For the scalar quotients, define
\begin{equation}\label{scalarsubgroup}
 S_r(J)=\{a\in(\Z/r\Z)^\times:
 u|_{J[r]}=[a]\text{ for some }u\in\Aut_{\mathrm{gp}}(J)\}.
\end{equation}
Here $\Aut_{\mathrm{gp}}(J)=\End(J)^\times$.

\begin{proposition}[Isomorphisms among the scalar quotients]\label{fm-main}
Let $C$ be a smooth connected complex projective curve of genus
$g\geq2$, let $r\geq2$, and assume $\gcd(r,d)=1$.
For every $k\in(\Z/r\Z)^\times$, there is an equivalence
\[
 \Db(U_C(r,d))\simeq\Db(X_k).
\]
If moreover $g\geq3$ and $\Aut(C)=1$, then
\begin{equation}\label{partnerisomorphism}
 X_k\simeq X_l\quad\Longleftrightarrow\quad
 lk^{-1}\in S_r(J).
\end{equation}
Consequently, under these additional hypotheses,
\begin{equation}\label{partnerexactcount}
 \#\bigl(\{X_k:k\in(\Z/r\Z)^\times\}/\simeq\bigr)
 =\frac{\varphi(r)}{|S_r(J)|},\qquad
 \#\operatorname{FM}(U_C(r,d))\geq\frac{\varphi(r)}{|S_r(J)|}.
\end{equation}
The scalar family includes $U_C(r,d)=X_1$.
\end{proposition}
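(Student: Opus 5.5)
The plan has three parts: the equivalence, the isomorphism criterion, and the count.

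\emph{The equivalence.} The first assertion is Proposition~\ref{partnerconstruction} with $B=N$, $A=J$ and the principal polarization of $J$. The $H$-action on $N$ is tensoring, and $H=J[r]$ acts on the $J$-factor by $x\mapsto x-k\eta$. With $k=1$ this recovers the cover \eqref{cover}, so $Y_1\simeq M$. By definition \eqref{partnerquotient}, $X_k=Y_{J,[k]}$, and this matches the sign convention of \eqref{generalpartnerquotient}. So $\Db(M)\simeq\Db(X_k)$, with no hypothesis on $g$ beyond $g\ge2$ or on $\Aut(C)$. The construction in Proposition~\ref{partnerconstruction} only uses the kernel \eqref{partnerkernel}, Lemma~\ref{general-translation-linearization} and Krug--Sosna descent, none of which needs $g\geq3$.

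\emph{The isomorphism criterion.} Under $g\ge3$ and $\Aut(C)=1$, I would invoke the isomorphism clause of Theorem~\ref{partner-pair-classification}: $Y_{J,[k]}\simeq Y_{J,[l]}$ if and only if there is $u\in\Aut_{\mathrm{gp}}(J)$ with $u\circ[k]=[l]$ on $J[r]$. Since $[k]$ is an automorphism of $J[r]$ commuting with $u$, this says $u|_{J[r]}=[lk^{-1}]$, that is, $lk^{-1}\in S_r(J)$, which is \eqref{partnerisomorphism}. The proof of that clause is self-contained: Albanese via Lemma~\ref{quotient-albanese}, lifting through $[r]$, Lemma~\ref{geomproduct}, and the fact that tensoring automorphisms centralize $H$ by \eqref{fixedaut}. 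It therefore applies here without needing the Orlov-isometry part. The one check I would make is that it is legitimate to treat $X_k$ as a $Y_{A,\beta}$ even before knowing it lies in $\mathcal P_r(J)$. In fact it does lie there, by the Orlov matrix $\mat{k}{1}{ke-1}{e}$, whose $c=ke-1$ is divisible by $r^2$.

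\emph{The count.} $S_r(J)$ is a subgroup of $(\Z/r\Z)^\times$: it is the image of the homomorphism $u\mapsto$ scalar, restricted to those $u$ acting by scalars. So \eqref{partnerisomorphism} says the $X_k$ up to isomorphism are exactly the cosets of $S_r(J)$, which gives $\varphi(r)/|S_r(J)|$. The inequality for $\#\operatorname{FM}$ follows from the first part, since each $X_k$ is a partner. Finally, $X_1=M$ by the identification above. I expect no real obstacle; the only delicate point is the sign and direction conventions ($\beta$ versus $-\iota$, and $u\beta=\beta'$ versus its inverse). These do not matter because $S_r(J)$ is a group closed under inverses and contains $-1$.
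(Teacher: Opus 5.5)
Your proposal is correct and follows the paper's proof. The paper likewise gets existence from Proposition~\ref{partnerconstruction} with $B=N$, $A=J$, reduces the criterion $u[k]=[l]$ on $J[r]$ from the isomorphism clause of Theorem~\ref{partner-pair-classification} to $lk^{-1}\in S_r(J)$, and obtains the count from the cosets of $S_r(J)$. Your extra checks make explicit points the paper leaves implicit: that the isomorphism clause does not depend on the Orlov-isometry condition, that $X_k$ lies in $\mathcal P_r(J)$ anyway, and that $S_r(J)$ is a subgroup containing $-1$.
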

\begin{proof}
Proposition~\ref{partnerconstruction} with $B=N$ and $A=J$
proves existence for every $g\geq2$.
For $g\geq3$ and $\Aut(C)=1$, Theorem~\ref{partner-pair-classification}
identifies $X_k$ and $X_l$ precisely when an automorphism
$u\in\Aut_{\mathrm{gp}}(J)$ satisfies $u[k]=[l]$ on $J[r]$.
This is the condition $lk^{-1}\in S_r(J)$.
The isomorphism classes are therefore the cosets of $S_r(J)$,
giving the count and the lower bound.
\end{proof}
\begin{corollary}[A lower bound from the torsion action]\label{partnerprimebound}
Assume $g\geq3$, $\Aut(C)=1$, and $\gcd(r,d)=1$.
Set
\[
 R_{2g}(r)=\{a\in(\Z/r\Z)^\times:a^{2g}=1\}.
\]
Then $S_r(J)\subseteq R_{2g}(r)$ and
\[
 \#\bigl(\{X_k\}/\simeq\bigr)
 \geq\frac{\varphi(r)}{|R_{2g}(r)|}.
\]
In particular, for every odd prime $p$ and $\gcd(p,d)=1$,
\begin{equation}\label{primepartnercount}
 \#\bigl(\{X_k:k\in(\Z/p\Z)^\times\}/\simeq\bigr)
 \geq\frac{p-1}{\gcd(2g,p-1)}.
\end{equation}
If $\gcd(g,(p-1)/2)=1$, then $S_p(J)=\{1,-1\}$, so
$X_k\simeq X_l$ if and only if $k\equiv\pm l\pmod p$,
and the family contains exactly $(p-1)/2$ isomorphism classes.
For a fixed curve $C$ as above, the numbers of constructed
pairwise nonisomorphic partners of $U_C(p,1)$ are unbounded
as $p$ ranges over the odd primes.
\end{corollary}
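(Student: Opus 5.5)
The plan is to bound $S_r(J)$ through the characteristic polynomial of an automorphism acting on a Tate module, and then to count cosets using Proposition~\ref{fm-main}. Suppose $u\in\Aut_{\mathrm{gp}}(J)$ satisfies $u|_{J[r]}=[a]$. The analytic representation gives $u$ as an element of $\SL$-type in $\operatorname{GL}(H_1(J,\Z))\simeq\operatorname{GL}_{2g}(\Z)$. Its characteristic polynomial $\chi_u$ is monic with integer coefficients, and its roots are algebraic integers lying on the unit circle in every complex embedding. Indeed, $u$ preserves a polarization, so it lies in a compact unitary group. By Kronecker's theorem every root is a root of unity. Hence $u$ has finite order, and $u$ is diagonalizable over $\C$ as an element of the finite group $\Aut_{\mathrm{gp}}(J)$.

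The key step is a congruence. Reducing modulo $r$, $u\equiv a\cdot\id$ on $J[r]=H_1(J,\Z)/r$, so $\chi_u(t)\equiv(t-a)^{2g}\pmod r$. This alone does not give $a^{2g}=1$. Instead I would use $\det u=\pm1$: its reduction is $\det(a\cdot\id)=a^{2g}$. Since $u$ preserves the symplectic form of the principal polarization up to sign, in fact $\det u=1$. The Rosati involution satisfies $u^\dagger u=1$, so $u$ is symplectic and lies in $\operatorname{Sp}_{2g}(\Z)$. Therefore $a^{2g}\equiv1\pmod r$, giving $S_r(J)\subseteq R_{2g}(r)$. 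This immediately yields the displayed lower bound from \eqref{partnerexactcount}.

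For an odd prime $p$ the group $(\Z/p\Z)^\times$ is cyclic of order $p-1$, so $|R_{2g}(p)|=\gcd(2g,p-1)$. This proves \eqref{primepartnercount}. For the sharper statement when $\gcd(g,(p-1)/2)=1$, note that $\gcd(2g,p-1)=2\gcd(g,(p-1)/2)=2$, so $R_{2g}(p)=\{\pm1\}$. Since $\pm1\in S_p(J)$ always, we get $S_p(J)=\{\pm1\}$, and Proposition~\ref{fm-main} gives exactly $(p-1)/2$ classes with $X_k\simeq X_l$ iff $k\equiv\pm l$.

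For unboundedness, observe that $(p-1)/\gcd(2g,p-1)\geq(p-1)/(2g)\to\infty$ as $p$ ranges over odd primes with $g$ fixed. We must also check $\gcd(p,1)=1$, which is trivial for $d=1$.

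The main obstacle is the symplecticity claim $\det u=1$ on $H_1$. I would derive it from the fact that $u$, being an automorphism of the principally polarized abelian variety only up to the Rosati condition, need not a priori preserve $\lambda$. So I would first show that the hypothesis $\Aut(C)=1$ with Torelli forces $\Aut_{\mathrm{gp}}(J,\lambda)=\{\pm1\}$. That alone suffices only for polarized automorphisms, however. If $u$ does not preserve $\lambda$, I would fall back on the weaker statement $\det u=\pm1$, i.e.\ $a^{2g}=\pm1$. I would then argue via the Weil pairing: $u$ maps $\lambda$ to the principal polarization $\widehat u\lambda u$, and comparing Weil pairings on $J[r]$ gives $a^2 e_r=e_r'$. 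To settle the sign I would use $\det$ on $H_1$ being $1$, since $\det u=1$ for any complex-linear automorphism: $\det_\R$ of a $\C$-linear map is $|\det_\C|^2>0$. This last observation cleanly gives $\det u=1$ in all cases.
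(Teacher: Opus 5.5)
Your final argument is correct and is the paper's proof. The integral determinant of $u$ on $H_1(J,\Z)$ is $\pm1$, and it is positive because $u$ is $\C$-linear, so $\det_\mathbb{R}=|\det_\C|^2>0$. Reducing modulo $r$ gives $a^{2g}\equiv 1$. Your cyclic-group count, the case $\gcd(g,(p-1)/2)=1$ via $\pm1\in S_p(J)$, and the unboundedness bound $(p-1)/(2g)$ also match the paper.

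You should delete the earlier claims, however, because they are false at the generality needed here. An element of $\Aut_{\mathrm{gp}}(J)=\End(J)^\times$ need not preserve any polarization. It need not have finite order, and $\Aut_{\mathrm{gp}}(J)$ need not be finite. It also need not satisfy $u^\dagger u=1$, so it need not lie in $\operatorname{Sp}_{2g}(\Z)$. The paper's own real-multiplication example is a counterexample: $t=\zeta_7+\zeta_7^{-1}$ is a Rosati-fixed unit of infinite order, its eigenvalues are real and off the unit circle, and $t^\dagger t=t^2\neq1$.

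The Kronecker paragraph, the symplecticity claim, and the Torelli and Weil-pairing discussion are therefore not needed. The complex-linearity determinant argument on its own proves $S_r(J)\subseteq R_{2g}(r)$.
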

\begin{proof}
An automorphism $u\in\Aut_{\mathrm{gp}}(J)$ acts on
$\Lambda=H_1(J,\Z)$ with determinant $1$: its integral determinant
is $\pm1$ and is positive because its real extension is complex linear.
The natural isomorphism $\Lambda/r\Lambda\simeq J[r]$ shows
that $u|_{J[r]}=[a]$ implies that its matrix is scalar $a$
modulo $r$. Taking determinants gives $a^{2g}=1$ in $\Z/r\Z$.
Proposition~\ref{fm-main} gives the first bound. The cyclic group
$(\Z/p\Z)^\times$ has exactly
$\gcd(2g,p-1)$ roots of $z^{2g}=1$, proving
\eqref{primepartnercount}. If $\gcd(g,(p-1)/2)=1$, this
number is two. The inclusion $\{1,-1\}\subseteq S_p(J)$
then gives the asserted equality. Finally, the right-hand side
of \eqref{primepartnercount} is at least $(p-1)/(2g)$,
which tends to infinity.
\end{proof}
For example, take $r=5$, a degree $d$ coprime to $5$, and a curve
of odd genus $g\geq3$ with $\Aut(C)=1$.
Then $S_5(J)=\{\pm1\}$ without any condition on $\End(J)$.
The choice $k=2$, $e=13$ in \eqref{partnerkernel} gives a partner
$X_2\not\simeq M$.

\begin{corollary}[Nonisomorphic partners are not moduli spaces of vector bundles]\label{nonmoduli-partners}
Assume $g(C)\geq4$ and $\Aut(C)=1$.
For $k\in(\Z/r\Z)^\times\setminus S_r(J)$,
the variety $X_k$ of Proposition~\ref{fm-main} is not isomorphic
to any $U_{C'}(r',d')$ with $C'$ a smooth connected projective curve
of genus at least two, $r'\geq2$, $d'\in\Z$, and $\gcd(r',d')=1$.
\end{corollary}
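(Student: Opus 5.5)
Suppose, for contradiction, that $X_k\simeq U_{C'}(r',d')$ for some $C',r',d'$ as in the statement. By Proposition~\ref{fm-main}, $\Db(X_k)\simeq\Db(M)$, so $U_{C'}(r',d')$ is a Fourier--Mukai partner of $M=U_C(r,d)$. Both curves have genus at least four, since $\dim$ and $\dim\Alb$ are derived invariants and $g(C')=\dim\Alb(U_{C'}(r',d'))$ by Lemma~\ref{quotient-albanese}. More carefully: Corollary~\ref{moduli-torelli} already needs only $g\geq2$, and it gives $C'\simeq C$ and $r'=r$. The genus hypothesis $g(C')\geq4$ then holds automatically. Theorem~\ref{moduli-derived-classification} now gives $d'\equiv\pm d\pmod r$ and an isomorphism $U_{C'}(r',d')\simeq U_C(r,d)=M=X_1$. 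Hence $X_k\simeq X_1$.

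Next I apply the isomorphism criterion \eqref{partnerisomorphism} of Proposition~\ref{fm-main}, which is available since $g\geq3$ and $\Aut(C)=1$. From $X_k\simeq X_1$ it gives $k\cdot1^{-1}$, or equivalently $1\cdot k^{-1}$, lying in $S_r(J)$. Since $S_r(J)$ is a subgroup of $(\Z/r\Z)^\times$ (it is the image of the group $\Aut_{\mathrm{gp}}(J)$ acting by scalars), it is closed under inverses, so $k\in S_r(J)$. This contradicts the hypothesis $k\notin S_r(J)$.

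The only subtle point is confirming that the derived Torelli chain really applies to $C'$ without an a priori genus bound. I would handle this through Corollary~\ref{moduli-torelli}, which requires only genus at least two, to get $C'\simeq C$ first; this is why the statement allows any genus $\geq2$ for $C'$. As an alternative to invoking Theorem~\ref{moduli-derived-classification}, once $C'\simeq C$ and $r'=r$ one could directly use Theorem~\ref{moduli-derived-classification}(3)$\Rightarrow$(2) after obtaining $d'\equiv\pm d$ via \cite[Corollary~2.12]{AB} from the fixed-determinant isomorphism produced by Proposition~\ref{reconstruction}. Either route ends in $X_k\simeq X_1$, and the rest is formal.
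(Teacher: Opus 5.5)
Your proof is correct and follows the paper's argument: Corollary~\ref{moduli-torelli} gives $C'\simeq C$ and $r'=r$ (hence $g(C')\geq4$), Theorem~\ref{moduli-derived-classification} then gives $U_{C'}(r',d')\simeq U_C(r,d)=X_1$, and the criterion \eqref{partnerisomorphism} forces $k\in S_r(J)$, a contradiction. Your preliminary irregularity argument (Lemma~\ref{quotient-albanese} together with derived invariance of $q$) is also valid and would by itself give $g(C')=g(C)\geq4$, but it is not needed.
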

\begin{proof}
If $X_k\simeq U_{C'}(r',d')$, then the two bundle moduli spaces
are derived equivalent. Corollary~\ref{moduli-torelli} gives
$C'\simeq C$ and $r'=r$; in particular $g(C')\geq4$.
Theorem~\ref{moduli-derived-classification} then identifies
$U_{C'}(r',d')$ with $U_C(r,d)=X_1$, contrary to
Proposition~\ref{fm-main} and $k\notin S_r(J)$.
\end{proof}

\subsection{Examples with real multiplication}
Extra endomorphisms can identify scalar quotients and can also give
partners not represented by those quotients. We exhibit both effects
on a single complex Jacobian.

\begin{example}[Two effects of real multiplication]
\label{real-multiplication-examples}\label{ranksevenRM}
\label{rank-nineteen-nonscalar-partner}
Let $\zeta_7$ be a primitive seventh root of unity.
There is a smooth nonhyperelliptic complex curve $C$ of genus three
with $\Aut(C)=1$ and $\End(J(C))=\mathcal O_F$, where
\[
 F=\mathbb Q(t),\qquad t=\zeta_7+\zeta_7^{-1},\qquad
 t^3+t^2-2t-1=0,\qquad\mathcal O_F=\Z[t],
\]
for which the following hold.
In rank seven and every degree coprime to seven, all six scalar
quotients $X_k$ are isomorphic.
In rank nineteen and degree one, there is a partner with abelian
factor $J$ which is not isomorphic to any scalar quotient.
\end{example}
\begin{proof}
We first justify the existence of $C$ with the specified endomorphism
ring. The construction in \cite[Theorem 3.7]{HLL} gives a family of
nonhyperelliptic genus-three curves with an integral
$\mathcal O_F$-action. This action is self-adjoint for the canonical
principal polarization. Indeed, in the dihedral construction let
$q:X\to C=X/\langle\tau\rangle$ be the double quotient and let
$\sigma$ have order seven, with $\tau\sigma\tau=\sigma^{-1}$.
The endomorphism representing $t$ is $q_*\sigma_*q^*$; its Rosati
adjoint is $q_*\sigma^{-1}_*q^*$, which is the same endomorphism
because $q\tau=q$.

Section~5 of \cite{HLL} computes rank three for the deformation
map to the moduli of curves. At a smooth plane quartic, the dual
of the infinitesimal Torelli map is
$\operatorname{Sym}^2H^0(C,\omega_C)\to H^0(C,\omega_C^2)$;
it is an isomorphism since the canonical plane quartic lies on no
quadric. Thus the associated period map also has rank three.
On a simply connected neighbourhood, mark the integral homology,
the polarization, and the endomorphism $t$. The period map then
takes values in the three-dimensional domain of polarized
$\mathcal O_F$-linear Hodge structures. The holomorphic submersion
theorem shows that its image contains a nonempty analytic open
subset of this domain, or equivalently of the corresponding
Hilbert modular component. This analytic openness is sufficient
for the following argument.

Put $V=H_1(J,\mathbb Q)\simeq F^2$ on this marked open subset.
The period domain is a product of three upper half-planes:
the complex structure varies independently on each summand of
$V\otimes\mathbb R=\bigoplus_{\sigma:F\hookrightarrow\mathbb R}\mathbb R^2$.
An operator commuting with every such complex structure has no
off-diagonal blocks and is scalar on each real summand. If the
operator is rational, these scalars are the three embeddings of
one element of $F$. Thus the common rational centralizer is exactly
$F$. For each $q\in\End_{\mathbb Q}(V)\setminus F$, the condition
that $q$ preserve the Hodge structure defines a proper closed
analytic subset of the chart. There are countably many such $q$.
Baire's theorem gives a point outside their union, at which
$\End(J)\otimes\mathbb Q=F$. Since $\End(J)$ is an order containing
the maximal order $\mathcal O_F$, it equals $\mathcal O_F$.

Every curve automorphism has finite order, so its action on $J$
is $1$ or $-1$, the only roots of unity in $F$. Its projective action
on $H^0(C,\omega_C)$ is consequently trivial. The canonical map is
an embedding for this nonhyperelliptic curve, so $\Aut(C)=1$.

Fix a curve $C$ obtained in this way, and put $J=J(C)$.
For rank seven, $t$ is a unit with inverse $t^2+t-2$, and
\[
 t^7=9+14t-14t^2\equiv2\pmod{7\mathcal O_F}.
\]
Thus $2,-1\in S_7(J)$; these generate $(\Z/7\Z)^\times$.
Proposition~\ref{fm-main} gives the first assertion, and
$\id_N\times t^7$ explicitly induces $X_1\simeq X_2$.

For rank nineteen, put $v=t-3$ and $e=112(t^2+4t+10)$.
The defining polynomial gives
\[
 (t-3)(t^2+4t+10)=-29,\qquad ve=1-9\cdot19^2.
\]
Since $F$ is Rosati-fixed, the identification of $\NS(J)$ with
the Rosati-symmetric integral endomorphisms gives line bundles $L_v,L_e$ with
$\lambda^{-1}\phi_{L_v}=v$ and $\lambda^{-1}\phi_{L_e}=e$,
where $\lambda$ is the principal polarization of $J$. By
\eqref{elementary-kernel-matrices}, the line bundle kernel
\[
 K_v=p_1^*L_v\otimes p_2^*L_e\otimes\mathcal P^{-1}
\]
defines an equivalence with matrix
\[
 \mat{t-3}{1}{-9\cdot19^2}{112(t^2+4t+10)}.
\]
Since $v$ is invertible modulo $19$,
Lemma~\ref{general-translation-linearization} gives the required
linearization. Hence, for $N=SU_C(19,L)$ with $\deg L=1$,
\[
 Y_v=(N\times J)/J[19],\qquad
 \eta\cdot(E,x)=(E\otimes\eta,x-v\eta)
\]
is a partner of $U_C(19,1)$.

If $Y_v\simeq X_k$, the isomorphism classification would give
$u\in\mathcal O_F^\times$ with $uv\equiv k\pmod{19\mathcal O_F}$.
Norms would then give $\pm9\equiv k^3\pmod{19}$, since
$N_{F/\mathbb Q}(v)=-29$. The nonzero cubes modulo $19$ are
$\{1,7,8,11,12,18\}$, containing neither $9$ nor $-9$.
This proves the second assertion.
\end{proof}

\section{Autoequivalence groups}\label{secmoduli}
\subsection{The autoequivalence exact sequence}\label{secgroupsequence}
Return to the tensor-product cover $\pi:N\times J\to M$.

Theorems~\ref{liftmain} and \ref{prodmain}, together with
$\Pic(N)=\Z[\Theta_N]$, give every autoequivalence a lift
\begin{equation}\label{productform}
 (f_*M_{\Theta_N^m})\bo F,\qquad
 f\in\Aut(N),\quad m\in\Z,\quad F\in\Aut\Db(J).
\end{equation}
This assertion holds in every genus $g\geq2$.

\begin{lemma}[Uniqueness of lifts]\label{coverliftuniqueness}
Let $\pi:Y\to X$ be a connected finite Galois cover of smooth projective
varieties. Two underlying autoequivalences lifting the same autoequivalence through
$\pi$, with the pullback and pushforward
isomorphisms of Theorem~\ref{liftmain}, differ by a deck transformation.
\end{lemma}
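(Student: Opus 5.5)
Let $\widetilde\Phi_1,\widetilde\Phi_2:\Db(Y)\to\Db(Y)$ both lift an autoequivalence $\Phi$ of $\Db(X)$, with $G$ the deck group. The plan is to form $\Psi=\widetilde\Phi_2^{-1}\widetilde\Phi_1$, an autoequivalence of $\Db(Y)$, and to show that $\Psi\simeq g_*$ for some $g\in G$.

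\emph{Step 1: the relations for $\Psi$.} From the pullback relations we get
\[
\Psi\pi^*\simeq\widetilde\Phi_2^{-1}\pi^*\Phi\simeq\pi^*.
\]
Here $\widetilde\Phi_2^{-1}\pi^*\simeq\pi^*\Phi^{-1}$, which follows by inverting $\widetilde\Phi_2\pi^*\simeq\pi^*\Phi$. In the same way the pushforward relations give $\pi_*\Psi\simeq\pi_*$.

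\emph{Step 2: $\Psi$ is a morphism-type equivalence.} Evaluate at a skyscraper $k(y)$. Since $\pi_*\Psi(k(y))\simeq\pi_*k(y)\simeq k(\pi(y))$ and $\pi$ is finite, hence $\pi_*$ is exact and conservative on supports, the object $\Psi(k(y))$ has cohomology sheaves supported in the finite fibre $\pi^{-1}(\pi(y))$. Also $\pi_*$ applied to each cohomology sheaf is the corresponding cohomology of $k(\pi(y))$. So $\Psi(k(y))$ is a sheaf in degree $0$ of total length one, that is, a skyscraper $k(g(y))$ with $\pi(g(y))=\pi(y)$. By the skyscraper criterion \cite[Corollary~5.23]{Huybrechts}, $\Psi\simeq g_*M_Q$ for an automorphism $g$ of $Y$ over $X$ and a line bundle $Q$. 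Since $Y\to X$ is a connected Galois cover, $g$ is a deck transformation.

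\emph{Step 3: remove $Q$.} Then $g_*M_Q\pi^*\simeq\pi^*$. Apply this to $\cO_X$: since $g_*\cO_Y\simeq\cO_Y$ and $g_*Q$ is a line bundle, we get $g_*Q\simeq\cO_Y$, so $Q\simeq\cO_Y$. This gives $\widetilde\Phi_1\simeq\widetilde\Phi_2 g_*$.

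The main point to watch is Step 2. There we need the conservativity and exactness of the finite pushforward to force a length-one sheaf, and we need to know that the automorphism produced commutes with $\pi$. Both follow from the fibre computation, together with the fact that the point map of $g$ is the morphism given by the skyscraper criterion.
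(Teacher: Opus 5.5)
Your proposal is correct and takes essentially the same route as the paper. You form the quotient $\widetilde\Phi_2^{-1}\widetilde\Phi_1$, use the pushforward relation together with exactness and faithfulness of finite pushforward to show it sends $k(y)$ to an unshifted skyscraper in the fibre over $\pi(y)$, apply the skyscraper criterion to get $g_*M_Q$ with $\pi g=\pi$, and remove $Q$ by evaluating the pullback relation on $\cO_X$. Your Step 1 only spells out the derivation of these relations for the quotient, which the paper leaves implicit.
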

\begin{proof}
Their quotient $\mathcal L$ lifts the identity. For each closed point $y$,
$\pi_*\mathcal L(k(y))\simeq k(\pi(y))$.
Finite pushforward is exact and faithful, so $\mathcal L(k(y))$
is an unshifted length-one skyscraper in the fibre over $\pi(y)$.
The skyscraper criterion gives $\mathcal L\simeq f_*M_T$
\cite[Corollary~5.23]{Huybrechts}.
Now $\pi f=\pi$ makes $f$ a deck transformation, while
$\mathcal L\pi^*\cO_X\simeq\pi^*\cO_X$ gives $T\simeq\cO_Y$.
\end{proof}

For the remainder of this section assume
\begin{equation}\label{modgeneric}
 g\geq3,\qquad \Aut(C)=1.
\end{equation}
Equation~\eqref{fixedaut} identifies $\Aut(N)$ with the tensor
actions of $H$.
If a lift has geometric factor $f$ and deck-group automorphism $\nu$,
then $f s_\eta f^{-1}=s_{\nu(\eta)}$.
Since $H$ is abelian and acts faithfully, $\nu=\id_H$.
Removing a deck transformation absorbs $f$ into a translation on $J$.
Thus every autoequivalence has a lift of the form
\begin{equation}\label{modreducedlift}
 M_{\Theta_N^m}\bo F,
 \qquad m\in\Z,\quad F\in\Aut\Db(J),
\end{equation}
with a diagonal $H$-linearization.
By Lemma~\ref{coverliftuniqueness}, two such lifts differ by a deck
transformation. Their kernels have diagonal support in the $N$ factors,
so faithfulness forces that transformation to be the identity.
Their underlying kernels are therefore isomorphic. Abelian equivalence
kernels are sheaves up to shift \cite[Proposition 3.2]{Orlov},
so this isomorphism identifies the
shifts and gives $\Theta_N^m\bo E\simeq\Theta_N^{m'}\bo E'$.
Choose a closed point $z\in J^2$ with $E\otimes k(z)\neq0$.
Ordinary restriction to $N\times\{z\}$ gives an isomorphism
$(\Theta_N^m)^{\oplus n}\simeq(\Theta_N^{m'})^{\oplus n}$ for some
$n>0$: equality of the ranks first identifies the two fibre dimensions.
Taking determinants gives $n(m-m')[\Theta_N]=0$, hence $m=m'$
because $\Pic(N)$ is torsion free. Restriction to $\{p\}\times J^2$
for any closed point $p\in N$ then identifies $E$ and $E'$.
Hence the normalized underlying lift is unique; composition adds
$m$ and multiplies its abelian Orlov matrices. This uniqueness
does not fix a linearization: the linearizations of its simple
kernel form a torsor under $\Hom(H,\C^*)$. Their descents can
differ by the character line bundles of the cover.

\begin{lemma}[The central theta line bundle]\label{thetadescent}
The line bundle $\Theta_N$ admits an $H$-linearization. Fix one,
and let $Q_N$ be the descent of $\Theta_N\bo\cO_J$ to $M$.
For the tensoring action
$\mathfrak a:M\times J\to M$, $(E,x)\mapsto E\otimes x$,
$Q_N$ has a $J$-linearization
\begin{equation}\label{grpQaction}
 \mathfrak a^*Q_N\simeq\operatorname{pr}_M^*Q_N.
\end{equation}
Moreover, $M_{Q_N}$ is central in $\Aut\Db(M)$.
\end{lemma}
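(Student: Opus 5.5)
The first assertion is Lemma~\ref{thetalinearization}; we fix that $H$-linearization of $\Theta_N$. Then $\Theta_N\bo\cO_J$, with $\cO_J$ carrying its canonical translation linearization, is $H$-linearized on $N\times J$. Finite \'etale descent along $\pi$ produces $Q_N$ on $M$ with $\pi^*Q_N\simeq\Theta_N\bo\cO_J$ as $H$-equivariant line bundles.

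For \eqref{grpQaction}, lift the action to the cover. The action of $J$ on $N\times J$ by translation in the second factor, $((E,y),x)\mapsto(E,y+x)$, commutes with the $H$-action and covers $\mathfrak a$. Indeed $\pi(E,y+x)=E\otimes y\otimes x=\mathfrak a(\pi(E,y),x)$. Pulling $\Theta_N\bo\cO_J$ back along this translation gives $\Theta_N\bo\cO_{J\times J}$ on $N\times J\times J$, which is canonically isomorphic to its pullback along the projection. This isomorphism is the identity on the $\Theta_N$ factor, so it is $H$-equivariant for the $H$-action on $N\times J\times J$ that is trivial on the last factor. It also satisfies the cocycle condition for the $J$-action, since it is the identity on underlying data. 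Descending through $\pi\times\id_J$, which is finite \'etale with group $H$, gives the $J$-linearization \eqref{grpQaction}.

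For centrality, take any $\Phi\in\Aut\Db(M)$ and its normalized lift $M_{\Theta_N^m}\bo F$ from \eqref{modreducedlift}, with some diagonal $H$-linearization of its kernel. The functor $M_{Q_N}$ lifts to $M_{\Theta_N}\bo\id$, with the linearization fixed above. On the cover the two lifts commute: $M_{\Theta_N}$ commutes with $M_{\Theta_N^m}$, and the factors act separately. The key step is to check that they commute \emph{compatibly with the linearizations}. Then the descents satisfy $\Phi M_{Q_N}\simeq M_{Q_N}\Phi$.

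I expect this compatibility to be the main obstacle, since lifts are unique only up to the torsor $\Hom(H,\C^*)$ of linearizations. I would compare the two composite equivariant kernels. Both equal $\Theta_N^{m+1}\bo E$, where $E$ is the kernel of $F$, with linearizations obtained by tensoring the linearization of $\Theta_N$ with that of $\Theta_N^m\bo E$ in the two orders. Tensor product of line-bundle linearizations is commutative, and convolution with the diagonal-type kernel of $M_{\Theta_N}$ is tensoring with a pulled-back equivariant line bundle. So the two composite linearizations agree, and the composite equivariant kernels are isomorphic. Equivariant descent (\cite[Proposition~4.2]{KS}) then identifies the two descended functors.

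A cleaner alternative is to use \eqref{liftrelations}. Then $\Phi(Q_N\otimes G)$ and $Q_N\otimes\Phi(G)$ have isomorphic pullbacks as $H$-equivariant objects, naturally in $G\in\Db(M)$; apply the descent equivalence $\Db(M)\simeq\Db_H(N\times J)$ to conclude.
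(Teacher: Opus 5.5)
Your proposal is correct and follows the paper's proof. The linearization comes from Lemma~\ref{thetalinearization}, the $J$-linearization comes from descending the $H$-compatible translation action on the second factor, and centrality comes from the reduced lift $M_{\Theta_N^m}\bo F$ being supported on $\Delta_N\times J^2$, where the two pullbacks of $\Theta_N$ agree together with their linearizations.

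Your ``cleaner alternative'' is not independent of this. \eqref{liftrelations} concerns only underlying functors, so upgrading it to an $H$-equivariant isomorphism needs exactly the equivariant kernel comparison of your main argument.
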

\begin{proof}
Use the linearization supplied by
Lemma~\ref{thetalinearization} and descend
$\Theta_N\bo\cO_J$ along the free product action.
Translations on the second factor preserve the linearized line
bundle and commute with $H$, giving \eqref{grpQaction} with its
unit and multiplication compatibilities.

A lift of the form~\eqref{modreducedlift} is supported on $\Delta_N\times J^2$.
On this subvariety the input and output pullbacks of $\Theta_N$
coincide, including their linearizations. Tensor associativity
therefore gives an $H$-equivariant isomorphism between the kernels
of the lift composed on either side with $M_{\pi^*Q_N}$.
Descending proves centrality.
\end{proof}

Fix a symmetric principal theta line bundle $\Theta$ on $J$.
To apply Corollary~\ref{abeliandescent}, put
$A_m=\Delta_{N*}\Theta_N^m$ with its fixed linearization, and let
$K_F$ be a kernel of $F$ in \eqref{modreducedlift}.
Lemma~\ref{factorlinearization} identifies
linearizations of $A_m\bo K_F$ with those of $K_F$.
Replacing $\eta$ by $-\eta$ gives precisely the action in
Corollary~\ref{abeliandescent}.

We define the maps in \eqref{grpExactSequence} and prove exactness.
Recall the congruence subgroup $U(J\times\widehat J;r)$ from
\eqref{generalcongruence}.
For $x\in J$, let $\mathfrak a_x(E)=E\otimes x$ be the tensoring
automorphism of $M$. Write $\delta(E)=\det(E)\otimes L^{-1}$ and set
\begin{equation}\label{grpStandardM}
 S_{x,\xi}=(\mathfrak a_x)_*M_{\delta^*P_\xi},
 \qquad x\in J(\C),\quad\xi\in\widehat J(\C).
\end{equation}
For $m,k\in\Z$, define
\[
 \iota(m,k,x,\xi)=M_{Q_N^m}[k]S_{x,\xi}.
\]
Here $Q_N$ is fixed as in Lemma~\ref{thetadescent}, and $[k]$
is the cohomological shift. The other map, $R$, is
characterized by conjugation on the standard functors:
\[
 \Phi S_{x,\xi}\Phi^{-1}\simeq S_{R(\Phi)(x,\xi)}.
\]
The proof below shows that this defines a homomorphism into
$U(J\times\widehat J;r)$.

\begin{proof}[Proof of the exact sequence]
The functors $S_{x,\xi}$ identify $J\times\widehat J$ with the full Rouquier
group $\Aut^0(M)\times\Pic^0(M)$. Indeed,
$\Pic^0(M)\simeq\widehat J$ by Lemma~\ref{quotient-albanese}.
The map $J\to\Aut^0(M)$ is faithful: $\mathfrak a_x=\id$ first
gives $rx=0$ on determinants, then $x=0$ by faithfulness on $N$.
Moreover, since $\pi$ is \'etale and $\Aut(N)$ is finite,
\[
 H^0(M,T_M)=H^0(N\times J,T_{N\times J})^H\simeq\C^g.
\]
Thus the closed embedding $J\to\Aut^0(M)$ is an isomorphism.
Rouquier's theorem \cite[Theorem 4.18]{Rouquier} consequently defines
a homomorphism $R$ by actual conjugation on the $S_{x,\xi}$.

The equality $\delta\pi=[r]\operatorname{pr}_J$ shows that the
normalized lift of $S_{x,\xi}$ has abelian parameters $(x,r\xi)$:
the translation parameter is unchanged, whereas
$[r]^*P_\xi\simeq P_{r\xi}$. If the normalized lift of $\Phi$ has abelian matrix
$\gamma(F)=\mat{a}{b}{c}{e}$, uniqueness of normalized lifts gives
\[
 D R(\Phi)=\gamma(F)D,\qquad
 D=\operatorname{diag}(\id_J,r\id_{\widehat J}).
\]
Any two such homomorphisms differ by a homomorphism from the connected
variety $J\times\widehat J$ to the finite group $\ker D$, hence agree.
Explicitly,
\begin{equation}\label{Rmatrix}
 R(\Phi)=\mat{a}{rb}{c/r}{e},\qquad
 \Phi S_{x,\xi}\Phi^{-1}
 \simeq S_{ax+rb\xi,(c/r)x+e\xi}.
\end{equation}
The matrix $D$ multiplies the alternating Poincar\'e pairing by
$r$, so conjugation by $D$ preserves rational Orlov isometries.
Corollary~\ref{abeliandescent} gives $a,e\equiv\id\pmod r$ and
$c\in r^2\Hom(J,\widehat J)$. Thus every block in
\eqref{Rmatrix} is integral and both off-diagonal blocks are
divisible by $r$; hence $R(\Phi)\in U(J\times\widehat J;r)$.
Conversely, for
$v=\mat{a}{B}{C}{e}$ in this group,
\[
 DvD^{-1}=\mat{a}{B/r}{rC}{e}
\]
is an integral isometry satisfying \eqref{rankrcriterion}.
Indeed, the level-$r$ condition makes $B$ and $C$ vanish on
the corresponding $r$-torsion groups. Factoring through $[r]$
gives $B\in r\Hom(\widehat J,J)$ and
$C\in r\Hom(J,\widehat J)$ in the integral Hom groups.
Consequently $B/r$ is integral and $rC$ is divisible by $r^2$;
the isometry inverse, formed from the dual blocks, is integral as well.
Corollary~\ref{abeliandescent} realizes it by a linearized equivalence;
its product with $\cO_{\Delta_N}$ descends. Thus $R$ is surjective.

For the map $\iota$ defined above, translations preserve algebraically trivial line-bundle classes,
and $M_{Q_N}$ and the shift are central. Hence $\iota$ is a
homomorphism into $\ker R$.
If its value is the identity, uniqueness of normalized lifts gives
$m=k=0$, $x=0$, and $r\xi=0$. The original functor is then
$M_{\delta^*P_\xi}$, and injectivity of $\delta^*$ forces $\xi=0$.
So $\iota$ is injective.

If $R(\Phi)=I$, Orlov's kernel description gives
$F\simeq T_xM_y[k]$. Choose $\xi$ with $r\xi=y$.
Then $\Phi$ and $\iota(m,k,x,\xi)$ have the same underlying lift.
Linearizations of a simple kernel differ by a character of $H$
\cite[Lemma 1]{Ploog}. Here the character group $\Hom(H,\C^*)$ is identified with
$\widehat J[r]$ by the perfect pairing between $J[r]$ and
$\widehat J[r]$. The corresponding descent line bundles on $M$
are exactly $\delta^*P_\beta$, $\beta\in\widehat J[r]$, because
the cover is the pullback of $[r]:J\to J$ along $\delta$.
Replacing $\xi$ by $\xi+\beta$ gives $\Phi$, proving exactness.
The conjugation assertion is built into the definition of $R$.
Finally, the exponent $m$ is a homomorphism sending the central
$M_{Q_N}$ to $1$, so
\[
 \Aut\Db(M)=\langle M_{Q_N}\rangle\times\ker(\Phi\mapsto m).
\]
\end{proof}

\subsection{Splitting in the scalar case}
\begin{proof}[Proof of the splitting in Theorem~\ref{fullgroup}]
If $\End(J)=\Z$, the principal polarization identifies
$U(J\times\widehat J;r)$ with $\Gamma(r)$.
It suffices to split \eqref{grpExactSequence}.

For $r\geq3$, $\Gamma(r)$ is free
\cite[Remark 3.10]{ConradSL2}; arbitrary lifts of a free basis
therefore give a section of \eqref{grpExactSequence}.

For $r=2$, let $C_\Theta$ be convolution by $\Theta$.
Its kernel $d^*\Theta$, $d(x,y)=y-x$, has the canonical diagonal
$H$-linearization. Its product with $\cO_{\Delta_N}$ descends to
an equivalence $\mathsf U$ by \cite[Proposition~4.2]{KS}.
The identification
$(\Delta_N\times J^2)/H\simeq M\times J$,
$(E,x,y)\mapsto(E\otimes x,y-x)$, gives
\[
 \mathsf U(\mathcal E)=R\mathfrak a_*(\mathcal E\bo\Theta).
\]
Set
\begin{equation}\label{generators}
 \mathsf V=M_{\delta^*\Theta},\qquad
 \mathsf K=\kappa_*,\qquad \kappa(E)=E^\vee\otimes L.
\end{equation}
Their Jacobian lifts are $C_\Theta$, $M_{[2]^*\Theta}$, and
$[-1]_*$; the last uses $E^\vee\otimes L\simeq E$ on $N$.
Consequently
\[
 R(\mathsf U)=u=\mat{1}{2}{0}{1},\qquad
 R(\mathsf V)=v=\mat{1}{0}{2}{1},\qquad R(\mathsf K)=-I.
\]
These matrices $u,v,-I$ generate $\Gamma(2)$
\cite[Theorem 3.1]{ConradSL2}. Their projective actions satisfy,
for every $n\neq0$,
\[
 u^n\{z:|z|<1\}\subset\{z:|z|>1\}\cup\{\infty\},
 \qquad
 v^n\bigl(\{z:|z|>1\}\cup\{\infty\}\bigr)\subset\{z:|z|<1\}.
\]
Ping-pong therefore makes $u,v$ free generators of their
projective image, so $-I\notin\langle u,v\rangle$ and
$\Gamma(2)=\langle u,v\rangle\times\langle-I\rangle
\simeq F_2\times\Z/2\Z$, where $F_2$ denotes the free group on two
generators.
The identities $\kappa^2=\id$, $\delta\kappa=-\delta$, and
$\kappa\mathfrak a=\mathfrak a(\kappa\times[-1])$, together with
$[-1]^*\Theta\simeq\Theta$, give
\[
 \mathsf K^2\simeq\id,\qquad
 \mathsf K\mathsf U\simeq\mathsf U\mathsf K,
 \qquad \mathsf K\mathsf V\simeq\mathsf V\mathsf K.
\]
Thus $(u,v,-I)\mapsto(\mathsf U,\mathsf V,\mathsf K)$ defines a section.
In both cases the kernel's $\Z^2$ factor is central, and
\eqref{Rmatrix} gives the remaining action, proving \eqref{fullformula}.
\end{proof}

\begin{remark}[Scope of the group formula]\label{genustwoscope}
The lifting and product-decomposition statements apply also in genus two.
The group calculation assumes $\Aut(C)=1$ and therefore excludes
that genus. For noncoprime rank and degree the projective semistable
moduli space is generally singular and its stable locus generally
nonprojective, so the smooth-projective arguments do not apply.
\end{remark}

\begin{remark}[Additional N\'eron--Severi classes]
Let $T$ be a line bundle on $J$ whose class in $\NS(J)$ is not
an integral multiple of $[\Theta]$. Then $M_{\delta^*T}$ has lift
$\id_{\Db(N)}\bo M_{[r]^*T}$ and
\[
 R(M_{\delta^*T})=\mat{\id_J}{0}{r\phi_T}{\id_{\widehat J}}.
\]
This belongs to $U(J\times\widehat J;r)$ but not to its scalar
subgroup $\Gamma(r)$. The free-group argument above does not
establish a splitting for the larger group; determining such a
splitting requires computing the remaining extension class.
\end{remark}

\section*{Acknowledgement}
This paper is based on the author’s master’s thesis. The author thanks
Yinbang Lin for bringing this problem to his attention.
\section*{AI-assisted preparation}
Generative artificial-intelligence tools were used for language polishing.
Under detailed author direction. The authors supplied the core ideas, precise
mathematical constraints, and iterative guidance.  Apart from that
author-directed drafting assistance, the ideas and arguments are due to the
authors.  The authors independently verified every argument in its final
form and accept full responsibility for the paper.

\end{document}